\begin{document}

\title[Permuting triderivations and permuting trihomomorphisms]{Permuting triderivations and permuting trihomomorphisms in complex Banach algebras}

\author[C. Park]
{Choonkil Park}

\address{\noindent Choonkil Park \newline \indent  Research Institute for Natural Sciences, Hanyang University, Seoul 04763, Korea}
\email{baak@hanyang.ac.kr}

\begin{abstract}
In this paper, we solve the following  tri-additive $s$-functional inequalities
\begin{eqnarray}\label{0.1} && \nonumber \| f(x+y, z-w, a+b) + f(x-y, z+w, a-b) \\ &&  \nonumber\qquad -2 f(x, z, a) + 2 f(x, w, b) -2f(y, z, b) +2 f(y, w, a)\| \\ &&  \quad  \le \left \|s \left(2f\left(\frac{x+y}{2}, z-w, a+b \right) + 2f\left(\frac{x-y}{2}, z+w, a-b\right) \right. \right. \\ && \qquad \left. \left. -2 f(x, z, a) + 2 f(x, w, b) -2f(y, z, b) +2 f(y, w, a)\right)\right\| , \nonumber
\end{eqnarray}
\begin{eqnarray}\label{0.2} && \nonumber \left\|2f\left(\frac{x+y}{2}, z-w, a+b \right) + 2f\left(\frac{x-y}{2}, z+w, a-b\right) \right.  \\ && \nonumber \qquad \left. -2 f(x, z, a) + 2 f(x, w, b) -2f(y, z, b) +2 f(y, w, a)\right\| \\ && \quad  \le \|s ( f(x+y, z-w, a+b) + f(x-y, z+w, a-b) \\ &&  \nonumber\qquad -2 f(x, z, a) + 2 f(x, w, b) -2f(y, z, b) +2 f(y, w, a) )\| , 
\end{eqnarray}
where $s$ is a fixed nonzero complex number with $|s |<  1$.
Moreover, we prove the Hyers-Ulam stability and hyperstability of   permuting triderivations and permuting  trihomomorphisms in  Banach algebras and unital $C^*$-algebras, 
associated with  the tri-additive $s$-functional inequalities  {\rm (\ref{0.1})} and {\rm (\ref{0.2})}.
\end{abstract}

\subjclass[2010]{Primary 39B52, 17A40, 47B47, 46L57, 39B62.}

\keywords{permuting triderivation on $C^*$-algebra; permuting  trihomomorphism in Banach algebra; 
 Hyers-Ulam stability; hyperstability;  tri-additive $s$-functional inequality.\\ $^*$Corresponding author: Choonkil Park (email: baak@hanyang.ac.kr; fax: +82-2-2281-0019; office: +82-2-2220-0892.}

\baselineskip=13.4pt

\maketitle

\newtheorem{theorem}{Theorem}[section]
\newtheorem{lemma}[theorem]{Lemma}
\newtheorem{proposition}[theorem]{Proposition}
\newtheorem{corollary}[theorem]{Corollary}
\theoremstyle{definition}
\newtheorem{definition}[theorem]{Definition}
\newtheorem{example}[theorem]{Example}
\newtheorem{xca}[theorem]{Exercise}
\newtheorem{problem}[theorem]{Problem}
\theoremstyle{remark}
\newtheorem{remark}[theorem]{Remark}
\numberwithin{equation}{section}

\section{Introduction and preliminaries}\label{sec:1}

The stability problem of functional equations  originated from a
question of Ulam \cite{ul60}  concerning the stability of group
homomorphisms.
 Hyers \cite{hy41} gave a first affirmative partial answer to the question of Ulam for Banach spaces.
Hyers' Theorem  was generalized by Aoki \cite{a} for additive
mappings and by  Rassias \cite{ra78} for linear mappings by
considering an unbounded Cauchy difference.  A
generalization of the  Rassias theorem was obtained by G\u
avruta \cite{ga94} by replacing the unbounded Cauchy difference by a
general control function in the spirit of  Rassias' approach.
  Park \cite{ppp, ppp1} defined additive $\rho$-functional inequalities and proved the Hyers-Ulam stability of the additive $\rho$-functional inequalities in Banach spaces and non-Archimedean Banach spaces. The
 stability problems of various functional equations and functional inequalities have been extensively investigated
 by a number of authors (see \cite{c16, e17, efp, ega,  eg, kiss, lm}).

 \"{O}zt\"{u}rk \cite{o99} introduced and investigated  permuting triderivations in rings.
\"{O}zden {\it et al.} \cite{o06},  Yazarli \cite{y17} and Yazarli {\it et al. } \cite{y05}  investigated some properties of permuting triderivations on rings.

\begin{definition} {\rm \cite{o99}} Let $A$ be a ring. A tri-additive mapping $D: A^3 \to A$
is called a {\it permuting triderivation} on $A$ if $D$ satisfies 
\begin{eqnarray*}
D (xy,z, w) & = &   D(x,z, w)y + xD(y,z, w) ,\\
D(x_{\sigma(1)},x_{\sigma(2)}, x_{\sigma(3)}) &= & D(x_1, x_2, x_3)
\end{eqnarray*}
for all $x, y, z, w, x_1, x_2, x_3\in A$ and for every permutation  $(\sigma(1), \sigma(2), \sigma(3))$ of $(1,2,3)$.
\end{definition}

In this paper, we introduce  biderivations and bihomomorphisms  in a Banach algebra.

\begin{definition} Let $A$ and $B$ be complex Banach algebras. A $\mathbb{C}$-trilinear mapping $D: A^3 \to A$
is called a {\it  permuting triderivation} on $A$ if $D$ satisfies 
\begin{eqnarray*}
D (xy,z, w) & = &   D(x,z, w)y + xD(y,z, w) ,\\
D(x_{\sigma(1)},x_{\sigma(2)}, x_{\sigma(3)}) &= & D(x_1, x_2, x_3)
\end{eqnarray*}
for all $x, y, z, w, x_1, x_2, x_3\in A$ and for every permutation  $(\sigma(1), \sigma(2), \sigma(3))$ of $(1,2,3)$.

 A $\mathbb{C}$-trilinear mapping $H: A^3 \to B$
is called a {\it permuting   trihomomorphism}  if $H$ satisfies 
\begin{eqnarray*}
H (xy,zw, ab) & = &   H(x,z,a)H(y,w,b) ,\\
H(x_{\sigma(1)},x_{\sigma(2)}, x_{\sigma(3)}) &= & H(x_1, x_2, x_3)
\end{eqnarray*}
for all $x, y, z, w, a, b, x_1, x_2, x_3\in A$ and for every permutation  $(\sigma(1), \sigma(2), \sigma(3))$ of $(1,2,3)$.
\end{definition}

This paper is organized as follows:  In Sections 2 and 3, we solve the tri-additive $s$-functional inequalities {\rm (\ref{0.1})} and  {\rm (\ref{0.2})}  and prove the  Hyers-Ulam stability of the tri-additive $s$-functional inequalities {\rm (\ref{0.1})} and  {\rm (\ref{0.2})}  in complex Banach spaces.
In Sections 4 and 5, we investigate permuting  triderivations and permuting  trihomomorphisms  in Banach algebras and unital $C^*$-algebras associated with the tri-additive $s$-functional inequalities  {\rm (\ref{0.1})} and  {\rm (\ref{0.2})}.

Throughout this paper, let $X$ be a complex normed space and $Y$ be a complex Banach space. Let  $A$ and $B$ be  complex Banach algebras. 
 Assume that $s$ is a fixed nonzero complex number with $|s| < 1$.

\section{Tri-additive $s$-functional inequality {\rm (\ref{0.1})}}\label{sec:2}

We solve and  investigate the tri-additive $s$-functional inequality  (\ref{0.1})  in  complex  normed spaces.

\begin{lemma}\label{lm2.2}
If a mapping  $f : X^3 \rightarrow Y$ satisfies $f(0, z, a) = f(x,0, a) = f(x,z, 0)=0$ and  
\begin{eqnarray}\label{2.1}&& \nonumber \| f(x+y, z-w, a+b) + f(x-y, z+w, a-b) \\ &&  \nonumber\qquad -2 f(x, z, a) + 2 f(x, w, b) -2f(y, z, b) +2 f(y, w, a)\| \\ &&  \quad  \le \left \|s \left(2f\left(\frac{x+y}{2}, z-w, a+b \right) + 2f\left(\frac{x-y}{2}, z+w, a-b\right) \right. \right. \\ && \qquad \left. \left. -2 f(x, z, a) + 2 f(x, w, b) -2f(y, z, b) +2 f(y, w, a)\right)\right\| , \nonumber
\end{eqnarray} for all $x, y, z, w, a, b \in X$, then  $f : X^3 \rightarrow Y$ is tri-additive.
\end{lemma}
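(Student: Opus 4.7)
The plan is to follow the standard two-phase strategy for $s$-functional inequalities: first use a clever substitution to obtain a scaling identity in one variable that collapses the inequality to an equality, and then extract the three separate additivities by setting most of the variables to zero.

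First I would substitute $y=x$, $w=0$, $b=0$ into (\ref{2.1}). Using the hypotheses $f(0,z,a)=f(x,0,a)=f(x,z,0)=0$, almost every term on the left and right cancels; the left side collapses to $\|f(2x,z,a)-2f(x,z,a)\|$ and the right side collapses to $\|s(2f(x,z,a)-2f(x,z,a))\|=0$. This forces
\begin{equation*}
f(2x,z,a)=2f(x,z,a) \qquad \text{for all } x,z,a \in X,
\end{equation*}
or equivalently $2f(\tfrac{x}{2},z,a)=f(x,z,a)$. I would then substitute this homogeneity into the right-hand side of (\ref{2.1}), noting that $2f(\tfrac{x+y}{2},z-w,a+b)=f(x+y,z-w,a+b)$ and $2f(\tfrac{x-y}{2},z+w,a-b)=f(x-y,z+w,a-b)$. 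Thus the expression inside $\|s(\,\cdot\,)\|$ on the right is identical to the expression on the left, so (\ref{2.1}) becomes $\|F\|\le|s|\,\|F\|$ with
\begin{equation*}
F:=f(x+y,z-w,a+b)+f(x-y,z+w,a-b)-2f(x,z,a)+2f(x,w,b)-2f(y,z,b)+2f(y,w,a).
\end{equation*}
Since $|s|<1$, this yields $F=0$ identically.

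From the resulting equation $F=0$, I would extract three separate Jensen-type identities. Setting $w=0$ and $b=0$ eliminates the mixed terms and gives $f(x+y,z,a)+f(x-y,z,a)=2f(x,z,a)$; setting $y=0$ and $b=0$ gives $f(x,z-w,a)+f(x,z+w,a)=2f(x,z,a)$; and setting $y=0$ and $w=0$ gives $f(x,z,a+b)+f(x,z,a-b)=2f(x,z,a)$. Each of these is a Jensen equation in one variable with a zero value at $0$, so by the standard argument (set one variable equal to the other to get oddness, then subtract) each yields ordinary additivity in that variable. Therefore $f$ is additive in each argument separately, i.e., tri-additive.

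The only potentially delicate step is the first substitution: one has to notice that the choice $y=x$, $w=0$, $b=0$ is exactly what makes the ``$s$-side'' vanish while leaving a nontrivial scaling relation on the other side. Once the homogeneity $f(2x,z,a)=2f(x,z,a)$ is in hand, the equality $G=F$ is mechanical and the rest is routine manipulation of Jensen equations, so I do not expect any further obstacle.
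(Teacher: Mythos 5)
Your proposal is correct and follows essentially the same route as the paper: the same substitution $y=x$, $w=b=0$ makes the $s$-side vanish and yields $f(2x,z,a)=2f(x,z,a)$, after which $|s|<1$ upgrades the inequality to an equality and Jensen-type identities give additivity in each variable. The only difference is organizational: you insert the doubling identity into the full inequality to get the complete functional equation before specializing, which makes the second- and third-variable cases explicit where the paper specializes first (to $w=b=0$) and handles the remaining variables with a ``similarly''.
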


\begin{proof}
Assume that $f: X^3 \to Y$ satisfies (\ref{2.1}). 

Letting $x=y$ and $w=b=0$ in {\rm (\ref{2.1})}, we get
$
f(2x, z, a) = 2f(x, z, a)
$
 for all $x, z, a\in X$.

Letting $w=b=0$ in {\rm (\ref{2.1})}, we get
$
f(x+y, z, a) + f(x-y, z, a) = 2 f(x, z, a)
$
and so $f(x_1, z, a) + f(y_1, z, a) = 2 f\left(\frac{x_1 + y_1}{2}, z, a\right) = f(x_1 + y_1, z, a)$ for all $x_1 : = x+y, y_1 : = x-y, z, a \in X$, since $|s|< 1$ and $f(0, z, a) =0$ for all $z, a\in X$. So $f: X^3 \to Y$ is additive in the first variable. 

Similarly, one can show that $f: X^3 \to Y$ is additive in the second variable and in the third variable. Hence $f: X^3 \to Y$ is tri-additive. 
\end{proof}

We prove the Hyers-Ulam stability of the tri-additive $s$-functional
inequality (\ref{2.1}) in  complex Banach spaces.

\begin{theorem}\label{thm2.3}
Let $r > 1$ and $\theta$ be nonnegative real numbers   and let $f :
X^3 \rightarrow Y$ be a  mapping satisfying $f(x, 0, a)= f(0, z, a)= f(x,z, 0)=0$ and 
\begin{eqnarray}\label{2.4}&& \nonumber \| f(x+y, z-w, a+b) + f(x-y, z+w, a-b) \\ &&  \nonumber\qquad -2 f(x, z, a) + 2 f(x, w, b) -2f(y, z, b) +2 f(y, w, a)\| \\ &&  \quad  \le \left \|s \left(2f\left(\frac{x+y}{2}, z-w, a+b \right) + 2f\left(\frac{x-y}{2}, z+w, a-b\right) \right. \right. \\ && \qquad \left. \left. -2 f(x, z, a) + 2 f(x, w, b) -2f(y, z, b) +2 f(y, w, a)\right)\right\| , \nonumber 
  \\ & &  \qquad +  
 \theta ( \|x\|^r + \|y\|^r) (\|z\|^r +\|w\|^r  ) (\|a\|^r +\|b\|^r  ) \nonumber \end{eqnarray}
for all $x, y, z, w, a, b \in X$.
Then there exists a unique  tri-additive mapping $L : X^3 \rightarrow Y$
such that
\begin{eqnarray}\label{2.5}
\|f(x, z, a)- L(x, z, a) \|  \le  \frac{2 \theta }{2^{r}-2}  \|x \|^r \|z\|^r \|a\|^r 
\end{eqnarray} for all $x, z, a \in X$.
\end{theorem}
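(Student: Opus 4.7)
The plan is the classical Hyers direct method, using the dilation $x \mapsto x/2^{n}$ in the first variable.

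\textbf{Seed estimate and Cauchy sequence.} Setting $x=y$, $w=b=0$ in (\ref{2.4}), the vanishing conditions on $f$ collapse the $s$-expression on the right to $0$, so that inequality reduces to
\[
\|f(2x,z,a) - 2f(x,z,a)\| \le 2\theta\|x\|^{r}\|z\|^{r}\|a\|^{r}.
\]
Replacing $x$ by $x/2^{k+1}$ and telescoping yields, for $m<n$,
\[
\bigl\|2^{n}f(x/2^{n},z,a) - 2^{m}f(x/2^{m},z,a)\bigr\|\le \frac{2\theta}{2^{r}}\,\|x\|^{r}\|z\|^{r}\|a\|^{r}\sum_{k=m}^{n-1} 2^{k(1-r)}.
\]
Because $r>1$, the geometric series is summable, so $\{2^{n}f(x/2^{n},z,a)\}$ is Cauchy in the Banach space $Y$. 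Define $L(x,z,a):=\lim_{n\to\infty} 2^{n} f(x/2^{n}, z, a)$. Taking $m=0$ and $n\to\infty$, the sum of the series is $\frac{2^{r}}{2^{r}-2}$, which produces exactly the claimed bound $\|f(x,z,a)-L(x,z,a)\|\le \frac{2\theta}{2^{r}-2}\|x\|^{r}\|z\|^{r}\|a\|^{r}$.

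\textbf{Tri-additivity via Lemma~\ref{lm2.2}.} In (\ref{2.4}) I would replace $(x,y)$ by $(x/2^{n}, y/2^{n})$ and multiply by $2^{n}$. The perturbation acquires a factor $2^{-n(r-1)}\to 0$, killing the error term. The key observation is that both sides then collapse to the \emph{same} limit: each $2^{n}f(\cdot/2^{n},\cdot,\cdot)$ on the left converges to the value of $L$ at the unscaled point, and each $2\cdot 2^{n}f(\cdot/2^{n+1},\cdot,\cdot) = 2^{n+1}f(\cdot/2^{n+1},\cdot,\cdot)$ inside the $s$-expression converges to the same value. Writing $Q$ for the resulting common expression
\[
Q := L(x+y,z-w,a+b)+L(x-y,z+w,a-b)-2L(x,z,a)+2L(x,w,b)-2L(y,z,b)+2L(y,w,a),
\]
the limiting inequality reads $\|Q\|\le |s|\,\|Q\|$, and since $|s|<1$ we conclude $Q=0$. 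Hence $L$ trivially satisfies (\ref{2.1}), and it obviously inherits $L(0,z,a)=L(x,0,a)=L(x,z,0)=0$ from $f$; Lemma~\ref{lm2.2} then gives that $L$ is tri-additive. Uniqueness is standard: if $L'$ is another tri-additive mapping satisfying the same bound, tri-additivity yields $L(x,z,a)=2^{n}L(x/2^{n},z,a)$ and likewise for $L'$, so the triangle inequality gives
\[
\|L(x,z,a)-L'(x,z,a)\|\le \frac{4\theta}{2^{r}-2}\cdot 2^{-n(r-1)}\|x\|^{r}\|z\|^{r}\|a\|^{r}\xrightarrow[n\to\infty]{}0.
\]

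\textbf{Main obstacle.} The one point requiring care is the second step: recognizing that under the scaling $x\mapsto x/2^{n}$ the two bracketed expressions on the two sides of (\ref{2.4}) converge to the \emph{same} limit, so that the $|s|<1$ hypothesis forces it to vanish --- this is what allows Lemma~\ref{lm2.2} to be invoked cleanly instead of redoing its proof for $L$. The condition $r>1$ is used twice, once to ensure summability in the Cauchy argument and once to extinguish the perturbation in the limit.
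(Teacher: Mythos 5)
Your proposal is correct and follows essentially the same route as the paper's proof: the same seed inequality obtained from $x=y$, $w=b=0$, the same telescoping estimate and limit mapping $L(x,z,a)=\lim_n 2^n f(x/2^n,z,a)$ with the same constant $\frac{2\theta}{2^r-2}$, the same rescaling of the first-variable arguments to pass the inequality to $L$ and invoke Lemma~\ref{lm2.2}, and the same uniqueness argument. Your extra observation that both sides converge to the same expression $Q$, so that $\|Q\|\le |s|\,\|Q\|$ forces $Q=0$, is a harmless (indeed slightly stronger) refinement of the paper's step, which only records that $L$ satisfies the $s$-inequality before citing Lemma~\ref{lm2.2}.
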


\begin{proof}
Letting $w=b=0$ and  $y = x$  in {\rm (\ref{2.4})}, we get
\begin{eqnarray}\label{2.6}
 \| f(2x, z, a) - 2f(x, z, a) \|  \le 2 \theta \|x\|^{r} \|z\|^r \|a\|^r 
\end{eqnarray}
for all $x, z, a \in X$.  So
\begin{eqnarray*}
  \left\| f(x, z, a) - 2 f\left(\frac{x}{2}, z, a\right) \right\|  \le  \frac{2}{2^{r}} \theta \|x\|^{r}\|z\|^r \|a\|^r 
\end{eqnarray*}
for all $x, z, a \in X$.
 Hence
\begin{eqnarray}\label{2.7}
\left\|2^{l} f\left(\frac{x}{2^l}, z, a\right) - 2^{m} f\left(\frac{x}{2^{m}}, z, a\right)\right\|   & \le & 
 \sum_{j=l}^{m-1}\left\| 2^j f\left(\frac{x}{2^j}, z, a\right) - 2^{j+1} f\left(\frac{x}{2^{j+1}}, z, a\right) \right\| 
 \\ & \le  & \nonumber \frac{2 }{2^{ r}} \sum_{j=l}^{m-1} \frac{2^{ j}}{2^{ rj}} \theta \|x\|^{r} \|z\|^r \|a\|^r 
\end{eqnarray}
 for all nonnegative
integers $m$ and $l$ with $m>l$ and all $x, z, a \in X$. It follows from
{\rm (\ref{2.7})} that the sequence
$\{2^{k} f(\frac{x}{2^k}, z, a )\}$ is  Cauchy  for all $x, z, a \in X$.
Since $Y$ is a  Banach space, the sequence $\{2^{k} f(\frac{x}{2^k}, z, a)\}$ converges.
So one can define
the mapping $L : X^3 \rightarrow Y$ by
$$L(x, z,a) : = \lim_{k\to \infty} 2^{k} f\left(\frac{x}{2^k}, z, a\right) $$
for all $x, z, a \in X$. Moreover, letting $l =0$ and passing the limit $m \to
\infty$ in {\rm (\ref{2.7})}, we get {\rm (\ref{2.5})}.

It follows from {\rm (\ref{2.4})} that
\begin{eqnarray*}
&&  \| L(x+y, z-w, a+b) + L(x-y, z+w, a-b) \\ &&  \qquad -2 L(x, z, a) + 2 L(x, w, b) -2L(y, z, b) +2 L(y, w, a)\|  \\ && \quad = \lim_{n\to\infty}  \left\| 2^n \left(
f\left(\frac{x+y}{2^{n}}, z-w, a+b\right) +  f\left(\frac{x-y}{2^{n}}, z+w, a-b\right) -2 f\left(\frac{x}{2^{n}}, z, a\right)  \right. \right. \\ && \qquad \qquad \left.\left. +2 f\left(\frac{x}{2^{n}}, w, b\right)  -2 f\left(\frac{y}{2^{n}}, z, b \right) + 2 f\left(\frac{y}{2^{n}}, w, a \right)\right) \right\| \\ && \quad   \le   \lim_{n\to\infty}   \left\| 2^n s \left(
 2f\left(\frac{x+y}{2^{n}}, z-w, a+b \right) +  2 f\left(\frac{x-y}{2^{n}}, z+w, a-b\right) -2 f\left(\frac{x}{2^{n}}, z, a\right)  \right. \right. \\ && \qquad \qquad \left.\left. +2 f\left(\frac{x}{2^{n}}, w, b\right)  -2 f\left(\frac{y}{2^{n}}, z, b \right) + 2 f\left(\frac{y}{2^{n}}, w, a \right)\right) \right\| \\  \\ && \qquad\qquad   +    \lim_{n\to \infty} 
\frac{2^{n}}{2^{ r n}}  \theta (\|x\|^r + \|y\|^r ) ( \|z\|^r + \|w\|^r )  (\|a\|^r +\|b\|^r  ) 
\\ && \quad  \le \left \|s \left(2L\left(\frac{x+y}{2}, z-w, a+b \right) + 2L\left(\frac{x-y}{2}, z+w, a-b\right) \right. \right. \\ && \qquad \left. \left. -2 L(x, z, a) + 2 L(x, w, b) -2L(y, z, b) +2 L(y, w, a)\right)\right\|  
\end{eqnarray*}
for all $x, y, z, w, a, b \in X$. So
\begin{eqnarray*} &&  \| L(x+y, z-w, a+b) + L(x-y, z+w, a-b) \\ &&  \qquad -2 L(x, z, a) + 2 L(x, w, b) -2L(y, z, b) +2 L(y, w, a)\| \\ &&  \quad  \le \left \|s \left(2L\left(\frac{x+y}{2}, z-w, a+b \right) + 2L\left(\frac{x-y}{2}, z+w, a-b\right) \right. \right. \\ && \qquad \left. \left. -2 L(x, z, a) + 2 L(x, w, b) -2L(y, z, b) +2 L(y, w, a)\right)\right\|    \end{eqnarray*}
 for all $x, y, z, w, a, b
\in X$. By Lemma \ref{lm2.2}, the mapping $L : X^3 \rightarrow Y$ is tri-additive.

Now, let $T : X^3 \rightarrow Y$ be another
 tri-additive mapping satisfying {\rm (\ref{2.5})}. Then we have
\begin{eqnarray*}
 && \|L(x, z, a) - T(x, z, a)\|    =     \left\| 2^{q}L\left(\frac{x}{2^{q}}, z, a\right) - 2^{q} T\left(\frac{x}{2^{q}}, z, a\right)\right\| \\
&& \qquad  \le    \left\| 2^qL\left(\frac{x}{2^{q}}, z, a\right) -  2^qf\left(\frac{x}{2^{q}}, z, a\right) \right\| + 
 \left\| 2^q T\left( \frac{x}{2^{q}}, z, a\right)- 2^q f\left(\frac{x}{2^{q}}, z, a\right) \right\|  \\ &&\qquad 
  \le   \frac{4  \theta }{2^{ r}-2} \frac{2^{ q}}{ 2^{ qr}} \|x \|^r \|z \|^r \|a\|^r,
\end{eqnarray*}
which tends to zero as $q \to \infty$ for all $x, z, a \in X$. So we can
conclude that $L(x, z, a)=T(x, z, a)$ for all $x, z, a \in X$. This proves the
uniqueness of $L$, as desired.
\end{proof}

\begin{theorem}
Let $r < 1$ and $\theta$ be nonnegative real numbers  and let $f :
X^3 \rightarrow Y$ be a mapping satisfying   {\rm (\ref{2.4})} and $f(x,0, a)=f(0,z, a)=f(x,z, 0)=0$ for all $x,z,a\in X$. 
Then there exists a unique  tri-additive mapping $L : X^3 \rightarrow Y$
such that
\begin{eqnarray}\label{2.8}
\|f(x, z, a) - L(x, z, a) \|  \le  \frac{ 2 \theta }{2- 2^{r}} \|x \|^r \|z\|^r \|a\|^r
\end{eqnarray} for all $x, z, a \in X$.
\end{theorem}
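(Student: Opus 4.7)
The plan is to mimic the proof of Theorem \ref{thm2.3}, but to iterate the functional inequality in the opposite direction: since $r<1$, the sequence $\{2^k f(x/2^k,z,a)\}$ used before would diverge, so I would instead use $\{2^{-k} f(2^k x,z,a)\}$, which now has the convergent geometric ratio $2^{r-1}<1$.

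The first step is to reproduce inequality {\rm (\ref{2.6})}: setting $w=b=0$ and $y=x$ in {\rm (\ref{2.4})} gives
\[
\|f(2x,z,a)-2f(x,z,a)\|\le 2\theta\|x\|^r\|z\|^r\|a\|^r.
\]
Dividing by $2^{j+1}$ and substituting $x\mapsto 2^j x$ produces
\[
\left\|\frac{f(2^{j+1}x,z,a)}{2^{j+1}}-\frac{f(2^j x,z,a)}{2^j}\right\|\le 2^{(r-1)j}\,\theta\|x\|^r\|z\|^r\|a\|^r.
\]
Telescoping from $j=l$ to $m-1$ (exactly as in the chain {\rm (\ref{2.7})}) shows that $\{2^{-k}f(2^kx,z,a)\}$ is Cauchy for every $x,z,a\in X$, so its limit $L(x,z,a):=\lim_{k\to\infty}2^{-k}f(2^kx,z,a)$ exists in the Banach space $Y$. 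Setting $l=0$ and sending $m\to\infty$ in the telescoping estimate yields the desired bound $\frac{2\theta}{2-2^r}\|x\|^r\|z\|^r\|a\|^r$, since $\sum_{j=0}^{\infty}2^{(r-1)j}=\frac{1}{1-2^{r-1}}=\frac{2}{2-2^r}$.

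Next I would verify that $L$ satisfies the $s$-functional inequality {\rm (\ref{2.1})}, and then invoke Lemma \ref{lm2.2} to conclude that $L$ is tri-additive. Concretely, I replace $(x,y)$ by $(2^n x, 2^n y)$ in {\rm (\ref{2.4})} and divide by $2^n$; the main terms converge to the corresponding expressions evaluated at $L$, and the perturbation picks up a factor $\frac{2^{rn}}{2^n}=2^{(r-1)n}\to 0$ since $r<1$, which is the crucial sign-flip from Theorem \ref{thm2.3}. Uniqueness is handled by the standard argument: if $T$ is another tri-additive mapping satisfying {\rm (\ref{2.8})}, then by tri-additivity $L(x,z,a)-T(x,z,a)=2^{-q}\bigl(L(2^q x,z,a)-T(2^q x,z,a)\bigr)$, which is bounded by $\frac{4\theta}{2-2^r}\,2^{(r-1)q}\|x\|^r\|z\|^r\|a\|^r$ and tends to $0$ as $q\to\infty$.

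There is no real obstacle here beyond bookkeeping: the only substantive change from Theorem \ref{thm2.3} is the direction of iteration and the corresponding reason the geometric series converges, namely $r<1$ rather than $r>1$. The one thing to watch is that the constant in the estimate emerges as $\frac{2\theta}{2-2^r}$ (not $\frac{2\theta}{2^r-2}$), and that in the tri-additivity verification the triple product $(\|x\|^r+\|y\|^r)(\|z\|^r+\|w\|^r)(\|a\|^r+\|b\|^r)$ is left unscaled in $z,w,a,b$, so only the $2^n$ from the $x$-$y$ variables needs to be beaten by $2^{-n}$, which it is.
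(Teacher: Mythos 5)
Your proposal is correct and takes essentially the same route as the paper's proof: derive the contraction estimate from {\rm (\ref{2.6})}, iterate in the direction $x\mapsto 2^n x$ with weights $2^{-n}$ (the only change from Theorem \ref{thm2.3}), telescope to get Cauchyness and the error bound, check that $L$ satisfies {\rm (\ref{2.1})} because the perturbation carries the factor $2^{(r-1)n}\to 0$, apply Lemma \ref{lm2.2}, and run the standard uniqueness argument. Your summation also lands exactly on the stated constant $\frac{2\theta}{2-2^{r}}$ in {\rm (\ref{2.8})}.
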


\begin{proof}
It follows from {\rm (\ref{2.6})} that
\begin{eqnarray*}
  \left\| f(x, z, a) - \frac{1}{2} f(2x, z, a) \right\|  \le \theta \|x\|^r  \|z\|^r  \|a\|^r
\end{eqnarray*}
for all $x, z, a  \in X$.
Hence
 \begin{eqnarray}\label{2.9}
\left\|\frac{1}{2^l} f(2^l x, z,a) - \frac{1}{2^m} f(2^m x, z, a)\right\| &\le &
 \sum_{j=l}^{m-1} \left\| \frac{1}{2^j} f\left(2^j x, z, a\right) -\frac{1}{ 2^{j+1}} f\left(2^{j+1} x, z, a\right) \right\| \\ & \le & \nonumber    \sum_{j=l}^{m-1} \frac{2^{ r j}}{2^{ j+1}} \theta \|x\|^{r}  \|z\|^r \|a\|^r
\end{eqnarray}
 for all nonnegative
integers $m$ and $l$ with $m>l$ and all $x, z,a \in X$.
 It follows from  {\rm (\ref{2.9})} that the sequence $\{\frac{1}{2^n}
f(2^n x, z,a)\}$ is a Cauchy sequence for all $x, z,a \in X$. Since
$Y$ is complete, the sequence $\{\frac{1}{2^n} f(2^n x, z,a)\}$
converges. So one can define the mapping $L : X^3 \rightarrow Y$ by
$$L(x, z,a) : = \lim_{n\to \infty} \frac{1}{2^n} f(2^n x, z,a) $$
for all $x, z,a \in X$. Moreover, letting $l =0$ and passing the limit $m \to
\infty$ in {\rm (\ref{2.9})}, we get {\rm (\ref{2.8})}.

The rest of the proof is similar to the proof of Theorem \ref{thm2.3}.
\end{proof}

\section{Tri-additive $s$-functional inequality {\rm (\ref{0.2})}}\label{sec:3}

We solve and  investigate the tri-additive $s$-functional inequality  (\ref{0.2})  in  complex  normed spaces.

\begin{lemma}\label{lm3.1}
If a mapping  $f : X^3 \rightarrow Y$ satisfies $f(0, z, a) = f(x,0, a)=f(x,z,0)=0$ and  
\begin{eqnarray}\label{3.1} && \nonumber \left\|2f\left(\frac{x+y}{2}, z-w, a+b \right) + 2f\left(\frac{x-y}{2}, z+w, a-b\right) \right.  \\ && \nonumber \qquad   -2 f(x, z, a) + 2 f(x, w, b) -2f(y, z, b) +2 f(y, w, a) \| \\ &&  \quad  \le \left \|s \left(      f(x+y, z-w, a+b) + f(x-y, z+w, a-b) \right.\right.\\ &&  \nonumber\qquad \left.\left. -2 f(x, z, a) + 2 f(x, w, b) -2f(y, z, b) +2 f(y, w, a) \right)\right\|  \nonumber
\end{eqnarray} for all $x, y, z, w, a, b \in X$, then  $f : X^3 \rightarrow Y$ is tri-additive.
\end{lemma}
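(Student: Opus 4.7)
The strategy mirrors Lemma \ref{lm2.2}, but because the two sides of the inequality have been swapped, a slightly different substitution is needed at the outset. My plan is to first find a substitution that makes the \emph{right}-hand side of (\ref{3.1}) vanish (in order to pull out $2$-homogeneity in the first slot), and then find a substitution that makes both sides (after applying homogeneity) proportional to the same expression, so that the hypothesis $|s|<1$ forces that expression to be zero.

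\textbf{Step 1 (homogeneity in the first variable).} I would substitute $y = w = b = 0$ in (\ref{3.1}). Using the boundary conditions $f(0,z,a) = f(x,0,a) = f(x,z,0) = 0$ (which also force $f(x,0,0) = f(0,z,0) = f(0,0,a) = 0$), the right-hand side reduces to $\|s(f(x,z,a) + f(x,z,a) - 2f(x,z,a))\| = 0$, while the left-hand side reduces to $\|4f(x/2, z, a) - 2f(x,z,a)\|$. Hence $f(x,z,a) = 2f(x/2,z,a)$, i.e.\ $f(2x,z,a) = 2f(x,z,a)$ for all $x,z,a \in X$.

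\textbf{Step 2 (additivity in the first variable).} Next I would substitute only $w = b = 0$, keeping $x, y$ general. Both sides collapse to first-slot expressions. Using Step 1 in the form $2f(\tfrac{u}{2}, z, a) = f(u, z, a)$, the left-hand side becomes $\|f(x+y, z, a) + f(x-y, z, a) - 2f(x,z,a)\|$, which is exactly the expression inside the norm on the right-hand side (before the factor $s$). Since $|s|<1$, this forces $f(x+y,z,a) + f(x-y,z,a) = 2f(x,z,a)$. The standard change of variables $x_1 = x+y$, $y_1 = x-y$ then yields $f(x_1, z, a) + f(y_1, z, a) = 2f(\tfrac{x_1+y_1}{2}, z, a) = f(x_1+y_1, z, a)$, giving Cauchy additivity in the first variable.

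\textbf{Step 3 (other variables).} Running the same argument with $y = b = 0$ (respectively $y = w = 0$) produces $f(x, z+w, a) + f(x, z-w, a) = 2f(x,z,a)$ (respectively the analogous identity in the third slot), and the same change-of-variables trick gives additivity in the second and third variables. Hence $f$ is tri-additive.

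The proof is entirely mechanical once the correct initial substitution is identified; the only point of care is to keep track, at each substitution, of which of the six terms vanish via the boundary conditions and which survive. There is no genuine obstacle beyond this bookkeeping, because the inequality structure is set up so that after imposing $2$-homogeneity the two sides of (\ref{3.1}) become literal multiples of one another and $|s|<1$ does the rest.
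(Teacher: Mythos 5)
Your proposal is correct and follows essentially the same route as the paper: the substitution $y=w=b=0$ to kill the right-hand side and obtain $2f(x/2,z,a)=f(x,z,a)$, then $w=b=0$ plus $|s|<1$ to get the Jensen-type identity and hence additivity in the first variable, with the remaining variables handled analogously. The only unstated detail is that in Step 3 the ``same change-of-variables trick'' also needs $2$-homogeneity in the second (resp.\ third) slot, which is not given by Step 1 but follows immediately from the derived identity $f(x,z+w,a)+f(x,z-w,a)=2f(x,z,a)$ by taking $z=w$ and using $f(x,0,a)=0$ (and similarly with $a=b$ and $f(x,z,0)=0$), so this is at the same level of brevity as the paper's own ``similarly'' step.
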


\begin{proof}
Assume that $f: X^3 \to Y$ satisfies (\ref{3.1}). 

Letting $y=w=b=0$ in {\rm (\ref{3.1})}, we get
$
4 f\left(\frac{x}{2}, z, a\right) = 2f(x, z, a)
$
 for all $x, z, a\in X$. 

Let $w=b=0$ in {\rm (\ref{3.1})}.  It follows from {\rm (\ref{3.1})} that 
\begin{eqnarray*}
&&  \|f(x+y, z, a ) + f\left(x-y, z, a\right)   -2 f(x, z, a)  \| 
\\ && = \left\|2f\left(\frac{x+y}{2}, z, a \right) + 2f\left(\frac{x-y}{2}, z, a\right) -2 f(x, z, a) \right\| \\ &&  \le \left\|s (      f(x+y, z, a) + f(x-y, z, a)  -2 f(x, z, a) )\right\|
\end{eqnarray*}
and so $f(x+y, z, a ) + f\left(x-y, z, a\right)   -2 f(x, z, a)$ for all $x,y, z, a \in X$, since $|s|< 1$. So $f: X^3 \to Y$ is additive in the first variable. 

Similarly, one can show that $f: X^3 \to Y$ is additive in the second variable and in the third variable. Hence $f: X^3 \to Y$ is tri-additive. 
\end{proof}

We  prove the Hyers-Ulam stability of the tri-additive $s$-functional
 inequality (\ref{3.1}) in   complex Banach spaces.

\begin{theorem}
Let $r > 1$ and $\theta$ be nonnegative real numbers    and let $f :
X^3 \rightarrow Y$ be a  mapping satisfying $f(x,0,a)= f(0,z,a) =f(x,z,0)=0$ and 
\begin{eqnarray} \label{3.4}
 && \nonumber \left\|2f\left(\frac{x+y}{2}, z-w, a+b \right) + 2f\left(\frac{x-y}{2}, z+w, a-b\right) \right.  \\ && \nonumber \qquad   -2 f(x, z, a) + 2 f(x, w, b) -2f(y, z, b) +2 f(y, w, a) \| \\ &&  \quad  \le \left \|s \left(      f(x+y, z-w, a+b) + f(x-y, z+w, a-b) \right.\right.\\ &&  \nonumber\qquad \left.\left. -2 f(x, z, a) + 2 f(x, w, b) -2f(y, z, b) +2 f(y, w, a) \right)\right\|  \nonumber \\ && \qquad  +  \theta (\|x\|^r + \|y\|^r )( \|z\|^r + \|w\|^r) ( \|a\|^r + \|b\|^r)\nonumber 
\end{eqnarray}
for all $x, y, z, w,a,b \in X$.
Then there exists a unique  tri-additive mapping $L : X^3 \rightarrow Y$
such that
\begin{eqnarray}\label{3.5}
\|f(x, z,a)- L(x, z,a) \|  \le \frac{2^r \theta }{2(2^{ r}-2)} \|x \|^r  \|z \|^r \|a\|^r
\end{eqnarray} for all $x, z,a \in X$.
\end{theorem}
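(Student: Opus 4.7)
The argument mirrors the proof of Theorem \ref{thm2.3}, with the only new ingredient being the right specialization of (\ref{3.4}) to produce a useful one-step comparison between $f(x,z,a)$ and $2f(x/2,z,a)$.

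First, I would set $y=w=b=0$ in (\ref{3.4}). Using the vanishing hypotheses on $f$, which in particular force $f(x,0,0)=f(0,z,0)=f(0,0,a)=0$, the $s$-factor on the right-hand side collapses to zero, so only the $\theta$-perturbation survives and gives
\[
\bigl\|4f\bigl(\tfrac{x}{2},z,a\bigr)-2f(x,z,a)\bigr\|\le\theta\|x\|^r\|z\|^r\|a\|^r,
\]
equivalently $\bigl\|f(x,z,a)-2f(x/2,z,a)\bigr\|\le \tfrac{\theta}{2}\|x\|^r\|z\|^r\|a\|^r$. This is the analogue of (\ref{2.6}) for the present setup.

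Next, I would iterate exactly as in Theorem \ref{thm2.3}: estimate $\|2^j f(x/2^j,z,a)-2^{j+1}f(x/2^{j+1},z,a)\|$ by applying the one-step bound to $x/2^j$, producing a geometric series with ratio $2^{1-r}<1$ (this is where $r>1$ enters). Completeness of $Y$ then yields the Cauchy limit $L(x,z,a):=\lim_{n\to\infty}2^n f(x/2^n,z,a)$, and summing the series from $j=0$ gives the prefactor $\tfrac{\theta}{2}\cdot\tfrac{2^r}{2^r-2}=\tfrac{2^r\theta}{2(2^r-2)}$, which is exactly the bound (\ref{3.5}).

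Finally, tri-additivity follows in the usual way: replacing $x,y$ by $x/2^n,y/2^n$ in (\ref{3.4}), scaling by $2^n$, and letting $n\to\infty$ kills the perturbation term (factor $2^{n(1-r)}\to 0$) and shows that $L$ itself satisfies the unperturbed inequality (\ref{3.1}), whereupon Lemma \ref{lm3.1} delivers tri-additivity. Uniqueness comes from the standard trick: for any other tri-additive $T$ obeying (\ref{3.5}), the identity $L(x,z,a)-T(x,z,a)=2^q\bigl(L(x/2^q,z,a)-T(x/2^q,z,a)\bigr)$ forces $\|L-T\|$ at $(x,z,a)$ to be bounded by a constant times $2^{q(1-r)}\|x\|^r\|z\|^r\|a\|^r$, which vanishes as $q\to\infty$. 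The only mildly delicate point is bookkeeping of the constants: compared with Theorem \ref{thm2.3}, the useful specialization here produces a $4f(x/2,z,a)$ term rather than an $f(2x,z,a)$ term, so the geometric sum is rescaled and the final constant shifts from $\tfrac{2}{2^r-2}$ to $\tfrac{2^r}{2(2^r-2)}$.
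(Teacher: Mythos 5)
Your proposal is correct and follows essentially the same route as the paper: setting $y=w=b=0$ in (\ref{3.4}) to get $\|4f(x/2,z,a)-2f(x,z,a)\|\le\theta\|x\|^r\|z\|^r\|a\|^r$, telescoping the scaled sequence $2^nf(x/2^n,z,a)$ to obtain the constant $\tfrac{2^r\theta}{2(2^r-2)}$, and then concluding tri-additivity via Lemma \ref{lm3.1} and uniqueness by the standard scaling argument, exactly as in Theorem \ref{thm2.3}.
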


\begin{proof}
Letting $y=w=b=0$   in {\rm (\ref{3.4})}, we get
\begin{eqnarray}\label{3.6}
  \left\| 4 f\left(\frac{x}{2}, z, a\right) -2 f(x, z, a)\right\|  \le   \theta \|x\|^r \|z\|^r \|a\|^r
\end{eqnarray}
for all $x, z, a \in X$.  So
\begin{eqnarray}\label{3.7}
 \left\|2^{l} f\left(\frac{x}{2^l}, z, a\right) - 2^{m} f\left(\frac{x}{2^{m}}, z, a\right)\right\|  &  \le &
 \sum_{j=l}^{m-1} \left\| 2^j f\left(\frac{x}{2^j}, z, a\right) - 2^{j+1} f\left(\frac{x}{2^{j+1}}, z, a\right) \right\|
 \\ & \le & \sum_{j=l}^{m-1} \frac{2^{ j}\theta }{2^{ r j +1}}\|x\|^r \|z\|^r \|a\|^r \nonumber 
\end{eqnarray}
 for all nonnegative
integers $m$ and $l$ with $m>l$ and all $x, z, a \in X$. It follows from
{\rm (\ref{3.7})} that the sequence
$\{2^{k} f\left(\frac{x}{2^k}, z, a\right)\}$ is  Cauchy  for all $x, z, a \in X$.
Since $Y$ is a  Banach space, the sequence $\{2^{k} f\left(\frac{x}{2^k}, z, a\right)\}$ converges.
So one can define
the mapping $L : X^3 \rightarrow Y$ by
$$L(x, z,a) : = \lim_{k\to \infty} 2^{k} f\left(\frac{x}{2^k}, z,a\right) $$
for all $x, z,a \in X$.   Moreover, letting $l =0$ and passing the limit $m \to
\infty$ in {\rm (\ref{3.7})}, we get {\rm (\ref{3.5})}.

The rest of the proof is similar to the proof of Theorem \ref{thm2.3}.
\end{proof}

\begin{theorem}
Let $r < 1$ and $\theta$ be nonnegative real numbers    and let $f :
X^3 \rightarrow Y$ be a  mapping satisfying {\rm (\ref{3.4})} and $f(x,0,a) = f(0,z,a) =f(x,y,0)=0$ for all $x,z,a\in X$. 
Then there exists a unique  tri-additive mapping $L : X^3 \rightarrow Y$
such that
\begin{eqnarray}\label{3.8}
\|f(x, z,a) - L(x, z,a) \|  \le \frac{2^r\theta }{2(2-2^{ r})} \|x \|^r \|z \|^r \|a\|^r
\end{eqnarray} for all $x, z,a \in X$.
\end{theorem}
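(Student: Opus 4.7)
The plan is to mimic the structure of the second theorem of Section 2 but starting from the one-step estimate produced by Lemma~3.1's computation, namely inequality~(3.6): $\|4 f(x/2, z, a) - 2 f(x, z, a)\| \le \theta \|x\|^{r} \|z\|^{r} \|a\|^{r}$. Since $r<1$, the right direction to iterate is to replace $x$ by $2x$ and rearrange, obtaining
\[
\left\| f(x,z,a) - \tfrac{1}{2} f(2x,z,a)\right\| \;\le\; \frac{2^{r}}{4}\,\theta\,\|x\|^{r}\|z\|^{r}\|a\|^{r}
\]
for all $x,z,a\in X$. This is the base inequality that will drive the iteration.

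Next, by substituting $x \mapsto 2^{j}x$ and dividing by $2^{j}$, a telescoping sum gives a Cauchy estimate of the form
\[
\left\|\tfrac{1}{2^{l}} f(2^{l}x,z,a) - \tfrac{1}{2^{m}} f(2^{m}x,z,a)\right\| \;\le\; \sum_{j=l}^{m-1} \frac{2^{rj}}{2^{j}}\cdot\frac{2^{r}}{4}\,\theta\,\|x\|^{r}\|z\|^{r}\|a\|^{r},
\]
and since $r<1$ the geometric ratio $2^{r-1}<1$ makes the tail summable. Hence the sequence $\{2^{-n}f(2^{n}x,z,a)\}$ is Cauchy in $Y$, and completeness of $Y$ lets us define
\[
L(x,z,a):=\lim_{n\to\infty} \tfrac{1}{2^{n}} f(2^{n}x,z,a).
\]
Setting $l=0$ and letting $m\to\infty$ sums the full geometric series, and one computes $\frac{2^{r}}{4}\sum_{j=0}^{\infty} 2^{(r-1)j} = \frac{2^{r}}{2(2-2^{r})}$, giving exactly the bound (3.8).

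To verify that $L$ is tri-additive, I would apply the inequality (3.4) at the scaled triple $(2^{n}x, 2^{n}y, z, w, a, b)$, divide through by $2^{n}$, and let $n\to\infty$. The tri-additive difference terms converge to the corresponding expression in $L$; the $s$-bounded term on the right also converges, while the perturbation $\theta(\|2^{n}x\|^{r}+\|2^{n}y\|^{r})(\|z\|^{r}+\|w\|^{r})(\|a\|^{r}+\|b\|^{r})$, once divided by $2^{n}$, is of order $2^{(r-1)n}\to 0$ since $r<1$. Thus $L$ satisfies inequality (3.1), and Lemma~\ref{lm3.1} forces $L$ to be tri-additive.

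Finally, uniqueness is obtained by the same trick as at the end of the proof of Theorem~\ref{thm2.3}: if $T$ is another tri-additive mapping satisfying (3.8), then for each $q$ one has $\|L(x,z,a)-T(x,z,a)\|=2^{-q}\|L(2^{q}x,z,a)-T(2^{q}x,z,a)\|$, and bounding by triangle inequality through $2^{-q}f(2^{q}x,z,a)$ produces an upper bound of order $2^{(r-1)q}\|x\|^{r}\|z\|^{r}\|a\|^{r}$, which tends to $0$. The main (only) subtlety is keeping track of the constant: one must make sure that the factor $\frac{2^{r}}{4}$ from the base estimate combines correctly with the geometric series $\sum 2^{(r-1)j}=\frac{2}{2-2^{r}}$ to yield $\frac{2^{r}}{2(2-2^{r})}$ rather than an off-by-factor-of-two answer; everything else is a routine transcription of the preceding theorem.
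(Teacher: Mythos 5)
Your proposal is correct and follows essentially the same route as the paper: it derives the one-step estimate $\|f(x,z,a)-\tfrac12 f(2x,z,a)\|\le \tfrac{2^r\theta}{4}\|x\|^r\|z\|^r\|a\|^r$ from (3.6), telescopes to get the Cauchy estimate (3.9), defines $L(x,z,a)=\lim_n 2^{-n}f(2^n x,z,a)$, and sums the geometric series to obtain (3.8), with the paper deferring the remaining steps to "similar to Theorem 2.3," which you carry out in the natural way (scaling $x,y$ by $2^n$ so the perturbation is $O(2^{(r-1)n})\to 0$, invoking Lemma 3.1, and the standard uniqueness argument).
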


\begin{proof}
It follows from {\rm (\ref{3.6})} that
\begin{eqnarray*}
  \left\| f(x, z, a) - \frac{1}{2} f(2x, z, a) \right\|  \le  \frac{2^r\theta}{4} \|x\|^r \|z\|^r \|a\|^r
\end{eqnarray*}
for all $x, z, a \in X$.
Hence
 \begin{eqnarray}\label{3.9}
 \left\|\frac{1}{2^l} f(2^l x, z,a) - \frac{1}{2^m} f(2^m x, z,a)\right\|   & \le &
 \sum_{j=l}^{m-1} \left\| \frac{1}{2^j} f\left(2^j x, z,a\right) -\frac{1}{ 2^{j+1}} f\left(2^{j+1} x, z,a\right) \right\| \\ & \le & \nonumber   \sum_{j=l}^{m-1}  \frac{2^{r j} }{2^{j+2}}2^r \theta \|x\|^r \|z\|^r 
\end{eqnarray}
 for all nonnegative
integers $m$ and $l$ with $m>l$ and all $x, z,a \in X$.
 It follows from  {\rm (\ref{3.9})} that the sequence $\{\frac{1}{2^n}
f(2^n x, z,a)\}$ is a Cauchy sequence for all $x, z,a \in X$. Since
$Y$ is complete, the sequence $\{\frac{1}{2^n} f(2^n x, z,a)\}$
converges. So one can define the mapping $L : X^3 \rightarrow Y$ by
$$L(x, z,a) : = \lim_{n\to \infty} \frac{1}{2^n} f(2^n x, z,a) $$
for all $x, z,a \in X$. Moreover, letting $l =0$ and passing the limit $m \to
\infty$ in {\rm (\ref{3.9})}, we get {\rm (\ref{3.8})}.

The rest of the proof is similar to the proof of Theorem \ref{thm2.3}.
\end{proof}

\section{Permuting triderivations on Banach algebras}\label{sec:4}

Now, we investigate permuting triderivations on complex Banach algebras and unital $C^*$-algebras
 associated with the tri-additive $s$-functional  inequalities (\ref{0.1}) and (\ref{0.2}).

\begin{lemma} {\rm \cite[Lemma 2.1]{bp}} 
Let  $f: X^2 \to Y$ be a bi-additive mapping such that $f(\lambda x, \mu z) = \lambda \mu f(x, z)$ for all $x,z \in X$ and $\lambda,\mu \in {\mathbb{T}^1 } : = \{\nu \in {\mathbb{C}} ~:~ |\nu|=1\}$. Then $f$ is $\mathbb{C}$-bilinear.
\end{lemma}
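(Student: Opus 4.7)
The plan is to upgrade the given $\mathbb{T}^1$-homogeneity to $\mathbb{C}$-linearity in each slot of $f$ separately, using only bi-additivity; continuity of $f$ is neither assumed nor needed. Bi-additivity immediately gives $\mathbb{Q}$-bilinearity in the standard way, so $f(qx,z)=qf(x,z)$ and $f(x,qz)=qf(x,z)$ for every $q\in\mathbb{Q}$ and all $x,z\in X$.

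The main ingredient is the classical observation that any $w\in\mathbb{C}$ with $|w|\le 1$ is the half-sum of two points on the unit circle: writing $w=|w|e^{i\theta}$ and $t:=\arccos|w|\in[0,\pi/2]$, the unit-modulus numbers $\mu_1:=e^{i(\theta+t)}$ and $\mu_2:=e^{i(\theta-t)}$ satisfy $\tfrac{\mu_1+\mu_2}{2}=\cos(t)e^{i\theta}=w$. Given an arbitrary $\lambda\in\mathbb{C}$, I would pick $M\in\mathbb{N}$ with $|\lambda|\le M$, apply this decomposition to $w:=\lambda/M$, and so obtain $\mu_1,\mu_2\in\mathbb{T}^1$ with $\lambda=\tfrac{M}{2}(\mu_1+\mu_2)$.

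Fixing $z\in X$ and setting $\mu=1\in\mathbb{T}^1$ in the hypothesis, one has $f(\mu_j x,z)=\mu_j f(x,z)$ for $j=1,2$. Combining this with bi-additivity and the $\mathbb{Q}$-linearity in the first slot applied to the identity $\lambda x=\tfrac{1}{2}(M\mu_1 x+M\mu_2 x)$, one computes
\begin{equation*}
f(\lambda x,z) = \tfrac{M}{2}\bigl(f(\mu_1 x,z)+f(\mu_2 x,z)\bigr) = \tfrac{M(\mu_1+\mu_2)}{2}f(x,z) = \lambda f(x,z).
\end{equation*}
Thus $f$ is $\mathbb{C}$-linear in the first slot, and the symmetric argument in the second slot completes the proof that $f$ is $\mathbb{C}$-bilinear.

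No substantive obstacle is expected. The only nonobvious ingredient is the half-sum representation of the closed unit disk; once that is in place the verification is a short rewriting using only bi-additivity (and hence $\mathbb{Z}$- and $\mathbb{Q}$-homogeneity) together with the given action of $\mathbb{T}^1$. This is the standard device for extending $\mathbb{T}^1$-linear behavior to $\mathbb{C}$-linear behavior when continuity is unavailable.
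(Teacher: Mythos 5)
Your proof is correct: bi-additivity gives $\mathbb{Q}$-homogeneity in each slot, the cosine construction does write every $w$ with $|w|\le 1$ as $\tfrac{\mu_1+\mu_2}{2}$ with $\mu_1,\mu_2\in\mathbb{T}^1$, and putting $\mu=1$ in the hypothesis legitimately isolates $\mathbb{T}^1$-homogeneity in the first variable, so the computation $f(\lambda x,z)=\tfrac{M}{2}\bigl(f(\mu_1x,z)+f(\mu_2x,z)\bigr)=\lambda f(x,z)$ goes through (the $\lambda=0$ case being trivial by additivity), and symmetrically in the second slot. Note, however, that the paper itself supplies no argument here: the lemma is imported verbatim from Bae and Park \cite[Lemma 2.1]{bp}, and the trilinear version (Lemma \ref{lm2.1}) is likewise dispatched by ``the proof is similar.'' The standard proof in that cited literature is the same scaling device you use, except that $\lambda/M$ (with $M$ a sufficiently large integer, e.g.\ $M>4|\lambda|$) is usually expressed as an average of \emph{three} unimodular numbers via the Kadison--Pedersen theorem on means of unitaries, rather than your explicit two-point half-sum $e^{i(\theta\pm t)}$ with $t=\arccos|w|$. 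Your variant is thus marginally more elementary and fully self-contained, since it avoids appealing to any external decomposition result, while buying exactly the same conclusion; otherwise the two routes are the same idea.
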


\begin{lemma}  \label{lm2.1}
Let  $f: X^3 \to Y$ be a tri-additive mapping such that $f(\lambda x, \mu z, \eta a) = \lambda \mu \eta  f(x, z, a)$ for all $x,z, a \in X$ and $\lambda, \mu, \eta \in {\mathbb{T}^1 }$. Then $f$ is $\mathbb{C}$-trilinear.
\end{lemma}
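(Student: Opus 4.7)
The plan is to mimic the bi-additive argument of \cite{bp} but in one extra variable, reducing the problem to showing $\mathbb{C}$-linearity in each slot separately.

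First I would note that tri-additivity already forces $f$ to be $\mathbb{Q}$-linear in each variable: for every $n \in \mathbb{Z}$ and every $q = m/n \in \mathbb{Q}$, repeated application of additivity gives $f(qx, z, a) = q\, f(x, z, a)$, and likewise in the other two slots. Combined with the hypothesis $f(\lambda x, \mu z, \eta a) = \lambda\mu\eta\, f(x, z, a)$ for $\lambda,\mu,\eta \in \mathbb{T}^1$, specialising one factor at a time (e.g.\ $\mu = \eta = 1$) yields the ``scalar-pulling'' property $f(\lambda x, z, a) = \lambda\, f(x, z, a)$ for $\lambda \in \mathbb{T}^1$, and similarly in the other two variables.

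Next, I would upgrade ``scalars in $\mathbb{T}^1$'' to ``scalars in $\mathbb{C}$'' in the first variable. Take an arbitrary $\lambda \in \mathbb{C}$. For $|\lambda| \le 1$, writing $\lambda = r e^{i\varphi}$ with $r \in [0,1]$ and choosing $\alpha$ with $\cos\alpha = r$, we can express
\begin{equation*}
\lambda = \tfrac{1}{2}\bigl( e^{i(\varphi + \alpha)} + e^{i(\varphi - \alpha)} \bigr) = \tfrac{1}{2}(\lambda_1 + \lambda_2), \qquad \lambda_1,\lambda_2 \in \mathbb{T}^1.
\end{equation*}
Using additivity in the first slot together with $\mathbb{Q}$-linearity ($1/2$-scaling) and the $\mathbb{T}^1$ case gives
\begin{equation*}
f(\lambda x, z, a) = f\!\left(\tfrac{\lambda_1 x + \lambda_2 x}{2}, z, a\right) = \tfrac{1}{2}\bigl( f(\lambda_1 x, z, a) + f(\lambda_2 x, z, a) \bigr) = \tfrac{\lambda_1 + \lambda_2}{2}\, f(x, z, a) = \lambda\, f(x, z, a).
\end{equation*}
For an arbitrary $\lambda \in \mathbb{C}$, pick $N \in \mathbb{N}$ with $|\lambda|/N \le 1$; then $\mathbb{Z}$-homogeneity and the case just handled give $f(\lambda x, z, a) = N\, f(\tfrac{\lambda}{N} x, z, a) = N \cdot \tfrac{\lambda}{N}\, f(x, z, a) = \lambda\, f(x, z, a)$. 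Hence $f$ is $\mathbb{C}$-linear in the first variable.

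The same reasoning applied in the second and third arguments shows $\mathbb{C}$-linearity in each slot, and since $f$ is additive in each variable as well, $\mathbb{C}$-trilinearity follows. The only slightly delicate step is the ``convex combination of two unit scalars'' trick in the middle paragraph—writing an arbitrary $\lambda$ in the closed unit disk as a mean of two points of $\mathbb{T}^1$—but this is elementary and is precisely the device used in the bi-additive Lemma from \cite{bp}; no continuity assumption on $f$ is needed.
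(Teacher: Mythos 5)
Your argument is correct and is essentially the same one the paper relies on: the paper's proof simply defers to \cite[Lemma 2.1]{bp}, and your write-up reproduces that standard argument (rational homogeneity from tri-additivity, $\mathbb{T}^1$-homogeneity in each slot from the hypothesis with the other two scalars set to $1$, the mean-of-two-unimodulars trick $\lambda=\tfrac{1}{2}(\lambda_1+\lambda_2)$ for $|\lambda|\le 1$, and integer scaling for general $\lambda$), applied one variable at a time. No gaps; in particular you are right that no continuity assumption is needed.
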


\begin{proof}
The proof is similar to the proof of \cite[Lemma 2.1]{bp}.
\end{proof}

\begin{theorem}\label{thm4.1} 
Let $r > 2$ and $\theta$ be nonnegative real numbers, and let $f :
A^3 \rightarrow A$ be a  mapping satisfying   $f(x,0, a)=f(0,z, a) =f(x,z,0)=0$ and 
\begin{eqnarray}\label{4.1}
&& \nonumber \| f(\lambda(x+y), \mu (z-w), \eta(a+b)) + f(\lambda(x-y), \mu(z+w), \eta(a-b)) \\ &&  \nonumber\qquad -\lambda \mu \eta (2 f(x, z, a) - 2 f(x, w, b) +2f(y, z, b) - 2 f(y, w, a))\| \\ &&  \quad  \le \left \|s \left(2f\left(\frac{x+y}{2}, z-w, a+b \right) + 2f\left(\frac{x-y}{2}, z+w, a-b\right) \right. \right. \\ && \qquad \left. \left. -2 f(x, z, a) + 2 f(x, w, b) -2f(y, z, b) +2 f(y, w, a)\right)\right\| , \nonumber   \\ & & \qquad \qquad  +  
 \theta ( \|x\|^r + \|y\|^r) (\|z\|^r +\|w\|^r  )  (\|a\|^r +\|b\|^r  )\nonumber 
\end{eqnarray} for all $\lambda, \mu, \eta \in {\mathbb{T}}^1$  and   all $x,y,z, w, a, b \in A$.
Then there exists a unique $\mathbb{C}$-trilinear  mapping  $D : A^3 \rightarrow A$
such that  
\begin{eqnarray}\label{4.4}
\|f(x, z, a)- D(x, z, a) \|  \le \frac{  2\theta }{2^r-2} \|x \|^r \|z \|^r \|a\|^r
\end{eqnarray} for all $x, z, a \in A$.

If, in addition, the mapping $f: A^3 \to A$ satisfies 
\begin{eqnarray}\label{4.2}
\| f(x y, z, a) -  f(x, z, a)y - x f(y,z, a)  \|   \le  \theta ( \|x\|^r +  \|y\|^r )  \|z\|^r \|a\|^r ,
\end{eqnarray}
\begin{eqnarray}\label{5.02}
\|f(x_{\sigma(1)}, x_{\sigma(2)}, x_{\sigma(3)}) - f(x_1, x_2, x_3)\| \le \theta  \cdot \|x_1\|^r  \|x_2 \|^r \|x_3 \|^r 
\end{eqnarray}
 for  all $x, y, z, x_1, x_2, x_3, a \in A$ and for every permutation $(\sigma(1), \sigma(2), \sigma(3))$ of $(1,2,3)$, 
then the mapping $D: A^3 \to A$ is a permuting  triderivation. 
\end{theorem}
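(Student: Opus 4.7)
The plan is to reduce the existence and uniqueness of $D$ to Theorem~\ref{thm2.3}, upgrade $D$ from $\mathbb{Q}$-trilinear to $\mathbb{C}$-trilinear via Lemma~\ref{lm2.1}, and finally verify the Leibniz and permutation axioms by passing to the limit in {\rm (\ref{4.2})} and {\rm (\ref{5.02})}. Setting $\lambda = \mu = \eta = 1$ in {\rm (\ref{4.1})} exactly recovers hypothesis {\rm (\ref{2.4})}, so Theorem~\ref{thm2.3} supplies a unique tri-additive $L\colon A^3 \to A$ satisfying the estimate {\rm (\ref{4.4})} and given pointwise by $L(x, z, a) = \lim_{k \to \infty} 2^k f(x/2^k, z, a)$.

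For $\mathbb{C}$-trilinearity, Lemma~\ref{lm2.1} reduces the task to showing $L(\lambda x, \mu z, \eta a) = \lambda \mu \eta\, L(x, z, a)$ for $\lambda, \mu, \eta \in \mathbb{T}^1$. Putting $y = x$ and $w = b = 0$ in {\rm (\ref{4.1})} kills the entire $s$-bracket on the right-hand side (each of its six $f$-evaluations either vanishes by the normalization $f(0,\cdot,\cdot) = f(\cdot,0,\cdot) = f(\cdot,\cdot,0) = 0$ or cancels algebraically), leaving
\[
\|f(2\lambda x, \mu z, \eta a) - 2\lambda \mu \eta\, f(x, z, a)\| \le 2\theta\, \|x\|^r \|z\|^r \|a\|^r .
\]
Replacing $x$ by $x/2^{n+1}$ and multiplying by $2^n$ yields an error of order $O(2^{-(r-1)n}) \to 0$, while the two sides of the resulting inequality converge to $L(\lambda x, \mu z, \eta a)$ and $\lambda \mu \eta\, L(x, z, a)$. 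Setting $D := L$ gives the desired $\mathbb{C}$-trilinear map.

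For the Leibniz rule, I would substitute $x \mapsto x/2^n$ and $y \mapsto y/2^n$ in {\rm (\ref{4.2})} and multiply by $4^n$; the error shrinks to $\theta(\|x\|^r + \|y\|^r)\|z\|^r\|a\|^r / 2^{(r-2)n} \to 0$, which is exactly where the hypothesis $r > 2$ is used. The key observation is that $4^n f(xy/4^n, z, a) = 2^{2n} f(xy/2^{2n}, z, a)$ is a subsequence of the sequence defining $D(xy, z, a)$, so it converges to $D(xy, z, a)$; the other two terms converge to $D(x,z,a)y$ and $xD(y,z,a)$, yielding the Leibniz identity.

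The permutation property is the step I expect to require the most care, since the defining limit of $D$ scales only the first coordinate while {\rm (\ref{5.02})} forces all three to be scaled symmetrically. The fix is to first establish the ``diagonal'' representation $D(x, z, a) = \lim_n 8^n f(x/2^n, z/2^n, a/2^n)$: applying the already-proved bound {\rm (\ref{4.4})} at the scaled point $(x/2^n, z/2^n, a/2^n)$ gives
\[
\bigl\|f(x/2^n, z/2^n, a/2^n) - D(x/2^n, z/2^n, a/2^n)\bigr\| \le \frac{2\theta}{2^r - 2}\, \frac{\|x\|^r \|z\|^r \|a\|^r}{2^{3rn}},
\]
and since $D$ is already $\mathbb{Q}$-trilinear, $8^n D(x/2^n, z/2^n, a/2^n) = D(x, z, a)$, so after multiplication by $8^n$ the error is $O(2^{-(3r-3)n}) \to 0$. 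Scaling every variable by $1/2^n$ in {\rm (\ref{5.02})} and multiplying by $8^n$ then yields $D(x_{\sigma(1)}, x_{\sigma(2)}, x_{\sigma(3)}) = D(x_1, x_2, x_3)$ for every permutation $\sigma$, completing the proof that $D$ is a permuting triderivation.
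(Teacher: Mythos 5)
Your proposal is correct and follows essentially the same route as the paper: set $\lambda=\mu=\eta=1$ and invoke Theorem \ref{thm2.3}, derive $\mathbb{T}^1$-homogeneity from the $y=x$, $w=b=0$ specialization of {\rm (\ref{4.1})} together with Lemma \ref{lm2.1}, and pass to the limit in {\rm (\ref{4.2})} and {\rm (\ref{5.02})} after scaling by $4^n$ and $8^n$, with $r>2$ and $r>1$ respectively making the errors vanish. The only difference is that you explicitly justify the diagonal representation $D(x,z,a)=\lim_{n\to\infty}8^n f\left(\frac{x}{2^n},\frac{z}{2^n},\frac{a}{2^n}\right)$ via the estimate {\rm (\ref{4.4})} and tri-additivity, a step the paper uses implicitly when handling {\rm (\ref{5.02})}.
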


\begin{proof}
Let $\lambda =\mu =\eta =1$ in {\rm (\ref{4.1})}.
By Theorem \ref{thm2.3}, there is a unique tri-additive mapping $D: A^3 \to A$ satisfying (\ref{4.4}) defined by 
$$D(x, z, a) : = \lim_{n\to \infty} 2^n f\left(\frac{x}{2^n}, z, a\right) $$
for all $x, z, a \in A$.  

Letting $y=x$ and $w=b=0$ in (\ref{4.1}), we get 
$$ \|f(2\lambda x, \mu z, \eta a) - 2 \lambda \mu \eta  f(x,z, a)\| \le 2\theta \|x\|^r \|z\|^r \|a\|^r$$  for all $x,z, a\in A$ and all $\lambda, \mu, \eta \in {\mathbb{T}^1}$. 
So \begin{eqnarray*}
&& \|D(2\lambda x, \mu z, \eta a) - 2 \lambda \mu \eta  D(x,z, a)\|\\ && =
\lim_{n\to \infty} 2^n \left\|f\left(2\lambda \frac{x}{2^n}, \mu z, \eta a \right) - 2 \lambda \mu \eta  f\left(\frac{x}{2^n},z, a\right)\right \| \le \lim_{n\to\infty} \frac{2^{n+1}}{2^{rn}}\theta \|x\|^r \|z\|^r \|a\|^r =0
\end{eqnarray*}  for all $x,z, a\in A$ and all $\lambda, \mu, \eta \in {\mathbb{T}^1}$. 
Since $D$ is tri-additive, $D(\lambda x, \mu z, \eta a) = \lambda \mu \eta  D(x,z, a)$
 for all $x,z, a\in A$ and all $\lambda, \mu, \eta \in {\mathbb{T}^1}$. 
By Lemma \ref{lm2.1}, 
 the tri-additive mapping $D: A^3 \to A$ is $\mathbb{C}$-trilinear.

It follows from {\rm (\ref{4.2})} that
\begin{eqnarray*}
 &&  \| D (xy, z, a) - D(x, z, a)y - xD(y,z, a) \| \\ &&  =   \lim_{n\to\infty} 4^n \left\|
 f\left(\frac{xy}{2^n\cdot 2^n}, z, a\right) -  f\left(\frac{x}{2^n}, z, a\right)  \frac{y}{2^n}- \frac{x}{2^n} f\left(\frac{y}{2^n}, z, a\right)
 \right\|  \\ &&  \le    \lim_{n\to \infty} \frac{4^n \theta}{2^{rn}} ( \|x\|^r + \|y\|^r ) \|z\|^r \|a\|^r = 0
\end{eqnarray*}
for all $x, y, z, a \in A$. Thus $$D (xy, z, a) = D(x, z, a) y + xD(y, z, a) $$
for all $x, y, z, a\in A$.

It follows from {\rm (\ref{5.02})} that
\begin{eqnarray*}
 &&  \| D(x_{\sigma(1)}, x_{\sigma(2)}, x_{\sigma(3)}) - D(x_1, x_2, x_3)\| \\ &&  =   \lim_{n\to\infty} 8^n \left\|
f\left(\frac{x_{\sigma(1)}}{2^n}, \frac{x_{\sigma(2)}}{2^n}, \frac{x_{\sigma(3)}}{2^n}\right) - f\left(\frac{x_1}{2^n}, \frac{x_2}{2^n}, \frac{x_3}{2^n}\right)
 \right\|  \\ &&  \le    \lim_{n\to \infty} \frac{8^n \theta}{8^{rn}} \theta  \cdot \|x_1\|^r  \|x_2 \|^r \|x_3 \|^r  = 0
\end{eqnarray*}
for all $x_1, x_2, x_3 \in A$. Thus $$D(x_{\sigma(1)}, x_{\sigma(2)}, x_{\sigma(3)}) = D(x_1, x_2, x_3)$$
for all $x_1, x_2, x_3\in A$ and for every permutation $(\sigma(1), \sigma(2), \sigma(3))$ of $(1,2,3)$.
Hence the  mapping $D: A^3 \rightarrow
A$ is a permuting triderivation.
\end{proof}

In Theorem \ref{thm4.1}, if $f(2x, z, a) = 2 f(x,z,a)$ for all $x,z,a\in A$, then one can easily show that $D(x,z,a) = f(x,z,a)$ for all $x,z,a \in A$.
 Thus we can obtain the following corollary.

\begin{corollary}
Let $r > 2$ and $\theta$ be nonnegative real numbers, and let $f :
A^3 \rightarrow A$ be a  mapping satisfying   $f(x,0, a)=f(0,z, a) =f(x,z,0)=0$ and {\rm (\ref{4.1})}.
Then there exists a unique $\mathbb{C}$-trilinear  mapping  $D : A^3 \rightarrow A$
satisfying {\rm (\ref{4.4})}. 

If, in addition, the mapping $f: A^3 \to A$ satisfies {\rm (\ref{4.2})}, {\rm (\ref{5.02})} and $f(2x, z, a) = 2 f(x,z,a)$ for all $x,z,a\in A$, 
then the mapping $f: A^3 \to A$ is a permuting  triderivation. 
\end{corollary}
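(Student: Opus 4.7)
\smallskip

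The plan is to reduce everything to Theorem~\ref{thm4.1} by observing that the hypothesis $f(2x,z,a)=2f(x,z,a)$ collapses the approximating sequence used to construct $D$ to the function $f$ itself. First, since the hypotheses of Theorem~\ref{thm4.1} (namely the vanishing conditions $f(x,0,a)=f(0,z,a)=f(x,z,0)=0$ together with the inequality~\eqref{4.1}) are in force, I would immediately apply that theorem to obtain a unique $\mathbb{C}$-trilinear mapping $D:A^{3}\to A$ satisfying the stability estimate~\eqref{4.4}. Moreover, since~\eqref{4.2} and~\eqref{5.02} are also assumed, Theorem~\ref{thm4.1} guarantees that this $D$ is a permuting triderivation.

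The remaining step is to show $f=D$. Recall from the proof of Theorem~\ref{thm4.1} that
\[
D(x,z,a)=\lim_{n\to\infty} 2^{n}\,f\!\left(\tfrac{x}{2^{n}},z,a\right)
\]
for all $x,z,a\in A$. I would argue by induction on $n$ that $f(2x,z,a)=2f(x,z,a)$ implies $2^{n} f(x/2^{n},z,a)=f(x,z,a)$ for every nonnegative integer $n$: replacing $x$ by $x/2^{n+1}$ in the identity gives $f(x/2^{n},z,a)=2\,f(x/2^{n+1},z,a)$, whence $2^{n+1}f(x/2^{n+1},z,a)=2^{n}f(x/2^{n},z,a)$. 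Consequently the sequence defining $D$ is stationary and equal to $f(x,z,a)$, so $D(x,z,a)=f(x,z,a)$ for all $x,z,a\in A$.

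Combining the two steps, $f$ coincides with the permuting triderivation $D$ produced by Theorem~\ref{thm4.1}, which finishes the proof. No new estimate or limit argument is required beyond the trivial induction; the only point deserving any care is the legitimacy of substituting $x/2^{n+1}$ into the functional identity $f(2x,z,a)=2f(x,z,a)$, which is justified because the identity is assumed to hold for \emph{all} $x\in A$. Thus there is no genuine obstacle, and the corollary follows as an immediate specialization of Theorem~\ref{thm4.1}.
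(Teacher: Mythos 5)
Your proposal is correct and follows the same route the paper indicates: apply Theorem \ref{thm4.1} to obtain the permuting triderivation $D$, then use $f(2x,z,a)=2f(x,z,a)$ (iterated via the trivial induction you give) to see that $2^{n}f(x/2^{n},z,a)=f(x,z,a)$ for all $n$, so the limit defining $D$ is stationary and $D=f$. This is exactly the argument the paper sketches with ``one can easily show that $D(x,z,a)=f(x,z,a)$.''
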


\begin{theorem}\label{thm4.2}
Let $r < 1$ and $\theta$ be nonnegative real numbers, and let $f :
A^3 \rightarrow A$ be a  mapping satisfying {\rm (\ref{4.1})} and  $f(x,0, a)=f(0,z, a) = f(x,z,0) =0$ for all $x,z, a\in A$. 
Then there exists a unique  $\mathbb{C}$-trilinear  mapping  $D : A^3 \rightarrow A$
such that
\begin{eqnarray}\label{4.5}
\|f(x, z, a)- D(x, z, a) \|  \le \frac{ 2\theta }{2- 2^r} \|x \|^r \|z \|^r \|a\|^r
\end{eqnarray} for all $x, z, a \in A$.

If, in addition, the mapping $f: A^3 \to A$ satisfies {\rm (\ref{4.2})} and  {\rm (\ref{5.02})}, 
then the mapping $D: A^3 \to A$ is a permuting  triderivation.  
\end{theorem}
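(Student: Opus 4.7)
The plan is to follow Theorem \ref{thm4.1} closely, replacing the downward-dilation sequence $\{2^n f(x/2^n,z,a)\}$ by the upward-dilation sequence $\{\tfrac{1}{2^n} f(2^n x,z,a)\}$, because here $r<1$. Setting $\lambda=\mu=\eta=1$ in (\ref{4.1}) reduces it to the hypothesis of the $r<1$ companion of Theorem \ref{thm2.3}, which supplies a unique tri-additive mapping $D:A^3\to A$ satisfying (\ref{4.5}) and given by
$$D(x,z,a)=\lim_{n\to\infty}\tfrac{1}{2^n} f(2^n x,z,a).$$

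To upgrade $D$ to a $\mathbb{C}$-trilinear map, I would put $y=x$ and $w=b=0$ in (\ref{4.1}) to obtain
$$\|f(2\lambda x,\mu z,\eta a)-2\lambda\mu\eta f(x,z,a)\|\le 2\theta\|x\|^r\|z\|^r\|a\|^r$$
for $\lambda,\mu,\eta\in\mathbb{T}^1$. Replacing $x$ by $2^n x$ and dividing by $2^{n+1}$, the left-hand side tends to $\|D(\lambda x,\mu z,\eta a)-\lambda\mu\eta D(x,z,a)\|$ while the right-hand side is bounded by $\tfrac{2^{rn}}{2^n}\theta\|x\|^r\|z\|^r\|a\|^r\to 0$, since $r<1$. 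Lemma \ref{lm2.1} then turns tri-additivity plus $\mathbb{T}^1$-homogeneity into $\mathbb{C}$-trilinearity.

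For the Leibniz identity, I would apply (\ref{4.2}) at the scaled pair $(2^n x,2^n y)$ and divide by $4^n$; the three rescaled terms converge respectively to $D(xy,z,a)$ (through the subsequence $m=2n$ of the defining limit), to $D(x,z,a)y$, and to $xD(y,z,a)$, while the error is controlled by $\tfrac{2^{rn}}{4^n}\theta(\|x\|^r+\|y\|^r)\|z\|^r\|a\|^r\to 0$ because $r<1<2$. For the permutation symmetry, I would first note that $\mathbb{C}$-trilinearity of $D$ together with (\ref{4.5}) forces
$$D(x_1,x_2,x_3)=\lim_{n\to\infty}\tfrac{1}{8^n} f(2^n x_1,2^n x_2,2^n x_3),$$
because $\|D(2^n x_1,2^n x_2,2^n x_3)-f(2^n x_1,2^n x_2,2^n x_3)\|=O(2^{3rn})$ while $8^n$ grows strictly faster. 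Feeding this identity into (\ref{5.02}) evaluated at $(2^n x_1,2^n x_2,2^n x_3)$ and using $\tfrac{2^{3rn}}{8^n}\to 0$ (since $r<1<3$) yields $D(x_{\sigma(1)},x_{\sigma(2)},x_{\sigma(3)})=D(x_1,x_2,x_3)$.

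The main obstacle I anticipate is the permutation step: unlike in Theorem \ref{thm4.1}, where the downward dilation scales all three variables uniformly, here $D$ is initially defined as a limit in only the first variable, so one must detour through the already-established $\mathbb{C}$-trilinearity plus the pointwise bound (\ref{4.5}) to promote $D(x_1,x_2,x_3)$ to the tri-dilation limit $\lim_n 8^{-n} f(2^n x_1,2^n x_2,2^n x_3)$. The three competing decay rates $2^{rn}/2^n$, $2^{rn}/4^n$, and $2^{3rn}/8^n$ all tend to zero precisely because $r<1$, which is the arithmetic heart of the argument.
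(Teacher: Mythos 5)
Your proposal is correct and follows essentially the same route as the paper, which simply adapts the proof of Theorem \ref{thm4.1} to the direct limit $D(x,z,a)=\lim_n 2^{-n}f(2^n x,z,a)$ appropriate for $r<1$, with the three decay rates $2^{(r-1)n}$, $2^{(r-2)n}$, $2^{3(r-1)n}$ killing the error terms exactly as you describe. Your explicit detour through trilinearity and (\ref{4.5}) to identify $D(x_1,x_2,x_3)$ with $\lim_n 8^{-n}f(2^n x_1,2^n x_2,2^n x_3)$ for the permutation step is a welcome tightening of a point the paper passes over silently, but it is a refinement of, not a departure from, the paper's argument.
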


\begin{proof}
The proof is similar to the proof of Theorem \ref{thm4.1}.
\end{proof}

\begin{corollary}
Let $r < 1$ and $\theta$ be nonnegative real numbers, and let $f :
A^3 \rightarrow A$ be a  mapping satisfying {\rm (\ref{4.1})} and  $f(x,0, a)=f(0,z, a) = f(x,z,0) =0$ for all $x,z, a\in A$. 
Then there exists a unique  $\mathbb{C}$-trilinear  mapping  $D : A^3 \rightarrow A$
satisfying {\rm (\ref{4.5})}. 

If, in addition, the mapping $f: A^3 \to A$ satisfies {\rm (\ref{4.2})}, {\rm (\ref{5.02})}  and $f(2x, z, a) = 2 f(x,z,a)$ for all $x,z,a\in A$, 
then the mapping $f: A^3 \to A$ is a permuting  triderivation.  
\end{corollary}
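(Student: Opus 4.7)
The first assertion, existence and uniqueness of a $\mathbb{C}$-trilinear mapping $D: A^3 \to A$ satisfying {\rm (\ref{4.5})}, is already furnished by Theorem \ref{thm4.2}, so there is nothing new to prove. The plan is therefore to reduce the second assertion to showing that, under the extra hypothesis $f(2x,z,a)=2f(x,z,a)$, the triderivation $D$ produced by Theorem \ref{thm4.2} coincides with $f$ itself.

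First I would recall that, since $r<1$, the mapping $D$ constructed in the proof of Theorem \ref{thm4.2} is given (as in the second theorem of Section~\ref{sec:2}) by
\[
D(x,z,a)=\lim_{n\to\infty}\frac{1}{2^n}\,f(2^n x,z,a)
\]
for all $x,z,a\in A$. Next I would iterate the identity $f(2x,z,a)=2f(x,z,a)$ to obtain, by an easy induction on $n$, that $f(2^n x,z,a)=2^n f(x,z,a)$ for every positive integer $n$. Dividing by $2^n$ makes the sequence defining $D$ constantly equal to $f(x,z,a)$, so $D(x,z,a)=f(x,z,a)$ for all $x,z,a\in A$.

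Having identified $D$ with $f$, I can then transport every conclusion of Theorem \ref{thm4.2} back to $f$: under hypotheses {\rm (\ref{4.2})} and {\rm (\ref{5.02})}, the mapping $D$ is a $\mathbb{C}$-trilinear permuting triderivation, hence so is $f$. In particular, the Leibniz rule in the first variable and the full permutation invariance in the three variables hold for $f$ without any error term, which is exactly what it means for $f$ to be a permuting triderivation on $A$.

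The only slightly delicate point, and therefore the main obstacle if any, is making sure that the formula $D(x,z,a)=\lim_{n\to\infty}2^{-n}f(2^n x,z,a)$ is indeed the one used inside the proof of Theorem \ref{thm4.2}; this is a matter of following the $r<1$ template from Section \ref{sec:2} rather than the $r>1$ one of Theorem \ref{thm2.3}. Everything else is the routine induction $f(2^n x,z,a)=2^n f(x,z,a)$ and a direct invocation of Theorem \ref{thm4.2}.
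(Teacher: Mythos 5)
Your proposal is correct and follows essentially the same route the paper intends: the paper's (implicit) proof is precisely the remark that the extra hypothesis $f(2x,z,a)=2f(x,z,a)$ forces $D=f$, after which the conclusion is read off from Theorem \ref{thm4.2} together with {\rm (\ref{4.2})} and {\rm (\ref{5.02})}. Your identification of $D$ with $f$ via the $r<1$ construction $D(x,z,a)=\lim_{n\to\infty}2^{-n}f(2^{n}x,z,a)$ (and the induction $f(2^{n}x,z,a)=2^{n}f(x,z,a)$) is exactly the ``one can easily show'' step the paper leaves to the reader.
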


Similarly, we can obtain the following results.

\begin{theorem}\label{thm5.1}
Let $r > 2$ and $\theta$ be nonnegative real numbers, and let $f :
A^3 \rightarrow A$ be a  mapping satisfying $f(x,0, a)=f(0,z, a) = f(x,z,0)=0$ and 
\begin{eqnarray}\label{5.1}
 && \nonumber \left\|2f\left(\lambda\frac{x+y}{2}, \mu(z-w), \eta(a+b) \right) + 2f\left(\lambda\frac{x-y}{2}, \mu(z+w), \eta(a-b)\right) \right.  \\ && \nonumber \qquad   - \lambda \mu \eta (2 f(x, z, a) - 2 f(x, w, b) +2f(y, z, b) -2 f(y, w, a)) \| \\ &&  \quad  \le \left \|s \left(      f(x+y, z-w, a+b) + f(x-y, z+w, a-b) \right.\right.\\ &&  \nonumber\qquad \left.\left. -2 f(x, z, a) + 2 f(x, w, b) -2f(y, z, b) +2 f(y, w, a) \right)\right\|  \nonumber \\ && \qquad  +  \theta (\|x\|^r + \|y\|^r )( \|z\|^r + \|w\|^r) ( \|a\|^r + \|b\|^r)\nonumber 
\end{eqnarray} for all $\lambda, \mu, \eta \in {\mathbb{T}}^1$  and   all $x,y,z, w, a, b \in A$.
Then there exists a unique  $\mathbb{C}$-trilinear mapping  $D : A^3 \rightarrow A$
such that
\begin{eqnarray}\label{5.4}
\|f(x, z, a)- D(x, z, a) \|  \le \frac{2^r \theta }{2(2^{ r}-2)} \|x \|^r  \|z \|^r \|a\|^r
\end{eqnarray} for all $x, z, a \in A$.

If, in addition, the mapping $f: A^3 \to A$ satisfies {\rm (\ref{4.2})} and  {\rm (\ref{5.02})},
then the mapping $D: A^3 \to A$ is a permuting triderivation.  
\end{theorem}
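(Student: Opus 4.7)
The plan is to imitate the proof of Theorem \ref{thm4.1}, using the stability result from Section 3 in place of Theorem \ref{thm2.3}. First I would specialize inequality {\rm (\ref{5.1})} by setting $\lambda=\mu=\eta=1$; this reduces {\rm (\ref{5.1})} to the hypothesis {\rm (\ref{3.4})}. The relevant theorem in Section 3 (the analog of Theorem \ref{thm2.3} for inequality {\rm (\ref{0.2})}) then produces a unique tri-additive mapping
\[
D(x,z,a):=\lim_{k\to\infty}2^{k}f\!\left(\tfrac{x}{2^{k}},z,a\right)
\]
satisfying the bound {\rm (\ref{5.4})}.

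Next I would upgrade $D$ from tri-additive to $\mathbb{C}$-trilinear. Setting $y=w=b=0$ in {\rm (\ref{5.1})} yields an estimate of the form
\[
\left\|4f\!\left(\lambda\tfrac{x}{2},\mu z,\eta a\right)-2\lambda\mu\eta\,f(x,z,a)\right\|\le \theta\|x\|^{r}\|z\|^{r}\|a\|^{r}
\]
for all $x,z,a\in A$ and $\lambda,\mu,\eta\in\mathbb{T}^{1}$. Replacing $x$ by $x/2^{n-1}$ and multiplying through by $2^{n-1}$, the right-hand side scales like $2^{n-1}/2^{r(n-1)}$, which tends to $0$ since $r>2>1$. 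Passing to the limit then gives $D(\lambda x,\mu z,\eta a)=\lambda\mu\eta\, D(x,z,a)$ for all $\lambda,\mu,\eta\in\mathbb{T}^{1}$, and Lemma \ref{lm2.1} promotes $D$ to a $\mathbb{C}$-trilinear mapping.

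The derivation identity and the permutation identity are then extracted from {\rm (\ref{4.2})} and {\rm (\ref{5.02})} exactly as in the proof of Theorem \ref{thm4.1}. Using tri-additivity one writes $D(xy,z,a)=\lim_{n\to\infty}4^{n}f\!\left(\tfrac{xy}{4^{n}},z,a\right)$ and $D(x,z,a)y+xD(y,z,a)=\lim_{n\to\infty}4^{n}\bigl[f(\tfrac{x}{2^{n}},z,a)\tfrac{y}{2^{n}}+\tfrac{x}{2^{n}}f(\tfrac{y}{2^{n}},z,a)\bigr]$; the hypothesis {\rm (\ref{4.2})} bounds the difference by a multiple of $4^{n}/2^{rn}\cdot\theta(\|x\|^{r}+\|y\|^{r})\|z\|^{r}\|a\|^{r}$, which vanishes precisely because $r>2$. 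Similarly, {\rm (\ref{5.02})} gives a difference bounded by $8^{n}/8^{rn}\,\theta\|x_{1}\|^{r}\|x_{2}\|^{r}\|x_{3}\|^{r}$, which also tends to $0$.

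The main obstacle, and the only place where the hypothesis $r>2$ is genuinely needed, is the derivation identity: the quadratic $xy$ forces a factor $4^{n}=2^{2n}$ in the limit, so the error decays only when $2^{r}>4$, i.e.\ $r>2$. For the $\mathbb{C}$-linearity and permutation steps the weaker condition $r>1$ would suffice, so once the derivation step is handled, the rest of the argument proceeds without further difficulty.
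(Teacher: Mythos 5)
Your proposal is correct and takes essentially the same route the paper intends: the paper proves Theorem \ref{thm5.1} by remarking it follows "similarly," i.e.\ by specializing (\ref{5.1}) at $\lambda=\mu=\eta=1$ to invoke the Section 3 stability theorem (which yields the bound (\ref{5.4}) via $D(x,z,a)=\lim_{n\to\infty}2^{n}f(x/2^{n},z,a)$), then repeating the $\mathbb{T}^1$-homogeneity argument with Lemma \ref{lm2.1} and the derivation and permutation limit arguments of Theorem \ref{thm4.1}, exactly as you do. Your observation that $r>2$ is only needed for the derivation step (the $4^{n}/2^{rn}$ factor) is also consistent with the paper's estimates.
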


\begin{corollary}
Let $r > 2$ and $\theta$ be nonnegative real numbers, and let $f :
A^3 \rightarrow A$ be a  mapping satisfying $f(x,0, a)=f(0,z, a) = f(x,z,0)=0$ and {\rm (\ref{5.1})}. 
Then there exists a unique  $\mathbb{C}$-trilinear mapping  $D : A^3 \rightarrow A$
satisfying {\rm (\ref{5.4})}. 

If, in addition, the mapping $f: A^3 \to A$ satisfies {\rm (\ref{4.2})},  {\rm (\ref{5.02})}  and $f(2x, z, a) = 2 f(x,z,a)$ for all $x,z,a\in A$, 
then the mapping $f: A^3 \to A$ is a permuting triderivation.  
\end{corollary}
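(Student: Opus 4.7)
The plan is to deduce this corollary as an immediate consequence of Theorem \ref{thm5.1} by showing that, under the extra hypothesis $f(2x,z,a)=2f(x,z,a)$, the approximating sequence used in the construction of $D$ collapses to $f$ itself. Concretely, Theorem \ref{thm5.1} applied to $f$ already produces a unique $\mathbb{C}$-trilinear mapping $D:A^3\to A$ satisfying (\ref{5.4}), given by
\[
D(x,z,a)=\lim_{n\to\infty} 2^{n} f\!\left(\frac{x}{2^{n}},z,a\right)
\]
for all $x,z,a\in A$. This settles the first assertion of the corollary without further work.

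For the second assertion, first I would substitute $x\mapsto x/2$ in the identity $f(2x,z,a)=2f(x,z,a)$ to obtain $f(x,z,a)=2f(x/2,z,a)$, and then iterate to get $2^{n}f(x/2^{n},z,a)=f(x,z,a)$ for every positive integer $n$ and all $x,z,a\in A$. Passing to the limit in the defining formula of $D$ yields $D(x,z,a)=f(x,z,a)$ for all $x,z,a\in A$. In particular $f$ is itself $\mathbb{C}$-trilinear.

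Since $f=D$, the two remaining hypotheses (\ref{4.2}) and (\ref{5.02}) together with Theorem \ref{thm5.1} imply that $f$ satisfies both the Leibniz-type identity $f(xy,z,a)=f(x,z,a)y+xf(y,z,a)$ in the first variable and the permutation invariance $f(x_{\sigma(1)},x_{\sigma(2)},x_{\sigma(3)})=f(x_{1},x_{2},x_{3})$ for every permutation $(\sigma(1),\sigma(2),\sigma(3))$ of $(1,2,3)$, so by Definition \`{a} la Section 1 the mapping $f$ is a permuting triderivation on $A$. The argument is essentially bookkeeping; no real obstacle arises, since the only non-trivial input—the construction and properties of $D$—is handed to us by Theorem \ref{thm5.1}, and the homogeneity hypothesis $f(2x,z,a)=2f(x,z,a)$ makes the identification $D=f$ immediate.
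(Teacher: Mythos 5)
Your proposal is correct and follows essentially the same route as the paper: the paper's remark before the analogous corollary ("if $f(2x,z,a)=2f(x,z,a)$ then $D(x,z,a)=f(x,z,a)$") is exactly your iteration argument showing $2^{n}f(x/2^{n},z,a)=f(x,z,a)$, so that the mapping $D$ from Theorem \ref{thm5.1} coincides with $f$, and the triderivation conclusion transfers to $f$. Nothing further is needed.
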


\begin{theorem}\label{thm5.2}
Let $r < 1$ and $\theta$ be nonnegative real numbers, and let $f :
A^3 \rightarrow A$ be a  mapping satisfying {\rm (\ref{5.1})} and  $f(x,0, a)=f(0,z, a) = f(x,z,0)=0$ for all $x,z, a\in A$.
Then there exists a unique $\mathbb{C}$-trilinear  mapping  $D : A^3 \rightarrow A$
such that
\begin{eqnarray}\label{5.5}
\|f(x, z, a) - D(x, z, a) \|  \le \frac{2^r\theta }{2(2-2^{ r})} \|x \|^r \|z \|^r\|a\|^r
\end{eqnarray} for all $x, z, a \in A$.

If, in addition, the mapping $f: A^3 \to A$ satisfies {\rm (\ref{4.2})} and  {\rm (\ref{5.02})},
then the mapping $D: A^3 \to A$ is a permuting triderivation. 
\end{theorem}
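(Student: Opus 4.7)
The plan is to parallel the scheme of Theorem~\ref{thm4.1}, with two substitutions: the $s$-functional inequality (\ref{4.1}) is replaced by (\ref{5.1}), and the ``contracting'' iteration $2^n f(x/2^n,z,a)$ is replaced by the expanding iteration $\frac{1}{2^n}f(2^n x,z,a)$ that is appropriate for the range $r<1$.

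First, setting $\lambda=\mu=\eta=1$ in (\ref{5.1}) reduces the hypothesis to (\ref{3.4}), so the second theorem of Section~\ref{sec:3} produces a unique tri-additive mapping $D:A^3\to A$ satisfying the bound (\ref{5.5}), namely $D(x,z,a):=\lim_{n\to\infty}\frac{1}{2^n}f(2^n x,z,a)$. To upgrade $D$ to a $\mathbb{C}$-trilinear map I substitute $y=w=b=0$ in (\ref{5.1}); the $s$-term on the right collapses since $f(x,z,a)+f(x,z,a)-2f(x,z,a)=0$, and the inequality becomes
\[
\|4f(\tfrac{\lambda x}{2},\mu z,\eta a)-2\lambda\mu\eta\,f(x,z,a)\|\le \theta\|x\|^r\|z\|^r\|a\|^r.
\]
Replacing $x$ by $2^n x$ and dividing by $2^n$ produces an error of order $2^{n(r-1)}\to 0$, so the limit yields $D(\lambda x,\mu z,\eta a)=\lambda\mu\eta\,D(x,z,a)$ for all $\lambda,\mu,\eta\in\mathbb{T}^1$. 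Lemma~\ref{lm2.1} then furnishes $\mathbb{C}$-trilinearity, and uniqueness is inherited from the Section~\ref{sec:3} theorem.

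For the derivation identity I substitute $x\mapsto 2^n x$ and $y\mapsto 2^n y$ in (\ref{4.2}) and divide by $4^n$; the error bound $\theta\cdot 2^{n(r-2)}(\|x\|^r+\|y\|^r)\|z\|^r\|a\|^r$ decays to $0$ because $r<1<2$. The three terms on the left converge respectively to $D(xy,z,a)$, $D(x,z,a)y$, and $x\,D(y,z,a)$; here I use that $\frac{1}{4^n}f(4^n x,z,a)\to D(x,z,a)$, which follows from the first-slot additivity of $D$ together with the already-established bound (\ref{5.5}). Similarly, substituting $x_i\mapsto 2^n x_i$ for $i=1,2,3$ in (\ref{5.02}) and dividing by $8^n$ gives an error of order $2^{3n(r-1)}\to 0$, and the limiting identity is $D(x_{\sigma(1)},x_{\sigma(2)},x_{\sigma(3)})=D(x_1,x_2,x_3)$ for every permutation $\sigma$.

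The main obstacle I anticipate is the mismatched scaling between the definition of $D$ (a single $2^n$-dilation in one slot) and the identities (\ref{4.2}), (\ref{5.02}) (which naturally call for $4^n$- and $8^n$-dilations). The remedy is precisely the tri-additivity of $D$ combined with (\ref{5.5}): these force $\frac{1}{4^n}f(4^n x,z,a)\to D(x,z,a)$ and $\frac{1}{8^n}f(2^n x_1,2^n x_2,2^n x_3)\to D(x_1,x_2,x_3)$, at which point the decaying error terms close the argument.
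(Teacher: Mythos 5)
Your proposal is correct and follows essentially the same route the paper intends for this theorem: reduce to the $r<1$ stability theorem of Section~\ref{sec:3} by setting $\lambda=\mu=\eta=1$ in (\ref{5.1}), obtain $\mathbb{T}^1$-homogeneity from the $y=w=b=0$ substitution together with Lemma~\ref{lm2.1}, and then derive the product and permutation identities from (\ref{4.2}) and (\ref{5.02}) by rescaling with the expanding iteration. Your explicit handling of the scaling mismatch (using tri-additivity of $D$ and the bound (\ref{5.5}) to justify $\tfrac{1}{4^n}f(4^n x,z,a)\to D(x,z,a)$ and $\tfrac{1}{8^n}f(2^n x_1,2^n x_2,2^n x_3)\to D(x_1,x_2,x_3)$) is a correct filling-in of a detail the paper leaves implicit.
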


\begin{corollary}
Let $r < 1$ and $\theta$ be nonnegative real numbers, and let $f :
A^3 \rightarrow A$ be a  mapping satisfying {\rm (\ref{5.1})} and  $f(x,0, a)=f(0,z, a) = f(x,z,0)=0$ for all $x,z, a\in A$.
Then there exists a unique $\mathbb{C}$-trilinear  mapping  $D : A^3 \rightarrow A$
satisfying {\rm (\ref{5.5})}.

If, in addition, the mapping $f: A^3 \to A$ satisfies {\rm (\ref{4.2})}, {\rm (\ref{5.02})} and $f(2x, z, a) = 2 f(x,z,a)$ for all $x,z,a\in A$, 
then the mapping $f: A^3 \to A$ is a permuting triderivation. 
\end{corollary}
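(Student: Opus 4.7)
The plan is to reduce the corollary directly to Theorem \ref{thm5.2}, with the extra condition $f(2x,z,a)=2f(x,z,a)$ only used to identify $D$ with $f$. First, I would invoke Theorem \ref{thm5.2} with the same $r<1$, $\theta$ and $f$: its hypotheses are the validity of (\ref{5.1}) and the vanishing $f(x,0,a)=f(0,z,a)=f(x,z,0)=0$, both of which are assumed here. This produces a unique $\mathbb{C}$-trilinear mapping $D:A^3\to A$ satisfying the estimate (\ref{5.5}), and, since (\ref{4.2}) and (\ref{5.02}) are also in force, Theorem \ref{thm5.2} tells us that $D$ is a permuting triderivation.

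The second step is to identify $D$ with $f$. Recall from the proof of Theorem \ref{thm5.2} (which is built on the $r<1$ construction of Section~3) that
\[
D(x,z,a)=\lim_{n\to\infty}\frac{1}{2^{n}}f(2^{n}x,z,a)
\]
for all $x,z,a\in A$. Under the additional assumption $f(2x,z,a)=2f(x,z,a)$, a trivial induction on $n$ gives $f(2^{n}x,z,a)=2^{n}f(x,z,a)$, so every term of the defining sequence equals $f(x,z,a)$ and hence $D(x,z,a)=f(x,z,a)$ for all $x,z,a\in A$.

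Combining the two steps, $f=D$ is a permuting triderivation on $A$, which is exactly the conclusion. The main (mild) obstacle is merely bookkeeping: making sure that the hypotheses of Theorem \ref{thm5.2} are precisely the ones we are handed, and that the explicit limit formula for $D$ used in Section~3 carries over unchanged into the setting of Section~4, so that the invariance $f(2x,z,a)=2f(x,z,a)$ indeed collapses the limit to $f$ itself. There is no genuinely new estimate to establish beyond what Theorem \ref{thm5.2} already provides.
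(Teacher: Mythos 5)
Your proposal is correct and follows exactly the route the paper intends: apply Theorem \ref{thm5.2} to obtain the permuting triderivation $D$ satisfying (\ref{5.5}), and then use the extra hypothesis $f(2x,z,a)=2f(x,z,a)$ (hence $f(2^{n}x,z,a)=2^{n}f(x,z,a)$) to collapse the defining limit $D(x,z,a)=\lim_{n\to\infty}2^{-n}f(2^{n}x,z,a)$ to $f(x,z,a)$, which is precisely the identification the paper indicates after Theorem \ref{thm4.1}. You also correctly used the $r<1$ limit formula rather than the $r>2$ one, so there is no gap.
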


From now on, assume that  $A$  is  a  unital $C^*$-algebra with unit $e$ and unitary group $U(A)$. 

\begin{theorem}\label{thm4.3}
Let $r > 2$ and $\theta$ be nonnegative real numbers, and let $f :
A^3 \rightarrow A$ be a mapping satisfying {\rm (\ref{4.1})} and  $f(x,0, a)=f(0,z, a) = f(x,z,0) =0$ for all $x,z, a\in A$.
Then there exists a unique $\mathbb{C}$-trilinear mapping  $D : A^3 \rightarrow A$ 
satisfying {\rm (\ref{4.4})}. 

If, in addition, the mapping $f: A^3\to A$ satisfies {\rm (\ref{5.02})} and 
\begin{eqnarray}\label{5.2}
\| f(uy, z, a) - f(u, z, a) y - uf(y,z, a) \|   \le  \theta (1+ \|y\|^r)\|z\|^r \|a\|^r
\end{eqnarray}
 for  all $u \in U(A)$ and all $y,z,a\in A$,
then the mapping $D: A^3 \to A$  is a permuting triderivation.
\end{theorem}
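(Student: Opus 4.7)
The plan is to recycle as much as possible from the proof of Theorem \ref{thm4.1} and concentrate the new work on the derivation identity, where the hypothesis has been weakened to unitary first arguments.

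First, setting $\lambda = \mu = \eta = 1$ in {\rm (\ref{4.1})} recovers exactly the tri-additive $s$-functional inequality with control of Theorem \ref{thm2.3}, so the existence and uniqueness of a $\mathbb{C}$-trilinear mapping $D: A^3 \to A$ satisfying {\rm (\ref{4.4})} and defined by $D(x, z, a) = \lim_{n \to \infty} 2^n f(x/2^n, z, a)$ follow as in Theorem \ref{thm4.1}; the permuting identity for $D$ follows from {\rm (\ref{5.02})} by the same $8^n/8^{rn} \to 0$ computation used there.

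For the derivation identity, the difficulty is that {\rm (\ref{5.2})} controls the error only when the first factor is unitary, so the simultaneous scaling $x \mapsto x/2^n$, $y \mapsto y/2^n$ from Theorem \ref{thm4.1} is unavailable: one cannot shrink $u$ without leaving $U(A)$. My plan is to scale $z$ and $a$ instead. From {\rm (\ref{4.4})} together with the $\mathbb{C}$-trilinearity relation $D(x, z/2^n, a/2^n) = D(x, z, a)/4^n$, a direct computation shows that $4^n f(x, z/2^n, a/2^n) \to D(x, z, a)$ for every $x, z, a \in A$, with error of order $4^{n(1-r)} \to 0$ (using only $r > 1$, which is implied by $r > 2$). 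Now substitute $z \to z/2^n$ and $a \to a/2^n$ in {\rm (\ref{5.2})} and multiply by $4^n$: the right-hand side is bounded by $4^{n(1-r)} \theta (1 + \|y\|^r) \|z\|^r \|a\|^r \to 0$, while the three terms on the left-hand side tend respectively to $D(uy, z, a)$, $D(u, z, a)y$, and $u D(y, z, a)$. This yields $D(uy, z, a) = D(u, z, a) y + u D(y, z, a)$ for every $u \in U(A)$ and all $y, z, a \in A$.

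Finally, since $A$ is a unital $C^*$-algebra, every element $x \in A$ admits a decomposition $x = \sum_{j=1}^k \lambda_j u_j$ with $\lambda_j \in \mathbb{C}$ and $u_j \in U(A)$. Using the $\mathbb{C}$-linearity and additivity of $D$ in its first argument, summing the unitary derivation identity over $j$ yields $D(xy, z, a) = D(x, z, a) y + x D(y, z, a)$ for all $x, y, z, a \in A$, and combined with the permuting identity already established, $D$ is a permuting triderivation. The main obstacle, as sketched above, is arranging a scaling in which both sides of {\rm (\ref{5.2})} can be pushed to the limit without shrinking $u$; rescaling $z$ and $a$ (the arguments in which $D$ is honestly linear) rather than the multiplicative arguments exploits the full strength of $\mathbb{C}$-trilinearity and resolves the difficulty.
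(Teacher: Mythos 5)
Your proof is correct, and its overall skeleton matches the paper's: obtain $D$ from Theorem \ref{thm4.1} (taking $\lambda=\mu=\eta=1$ in (\ref{4.1})), establish the derivation identity for unitary first arguments, and then pass to general $x$ via the decomposition $x=\sum_{j}\lambda_j u_j$ into unitaries, using $\mathbb{C}$-linearity of $D$ in the first variable. Where you genuinely depart from the paper is the middle step. The paper disposes of it with the single sentence that ``by the same reasoning as in the proof of Theorem \ref{thm4.1}'' one has $D(uy,z,a)=D(u,z,a)y+uD(y,z,a)$; but the reasoning of Theorem \ref{thm4.1} rescales the first arguments ($x\mapsto x/2^n$, $y\mapsto y/2^n$), which is precisely what (\ref{5.2}) does not allow, since $u/2^n\notin U(A)$, and rescaling only $y$ fails because the term $f(u,z,a)y$ does not rescale while the multiplied error $2^n\theta(1+\|y\|^r 2^{-rn})\|z\|^r\|a\|^r$ blows up. Your device --- rescaling the second and third arguments and using the limit representation $4^n f(x,z/2^n,a/2^n)\to D(x,z,a)$, which follows from (\ref{4.4}) together with the already-established $\mathbb{C}$-trilinearity of $D$ (so that $D(x,z/2^n,a/2^n)=4^{-n}D(x,z,a)$) and needs only $r>1$ --- carries out this step correctly and explicitly, and it supplies exactly the detail the paper's appeal to Theorem \ref{thm4.1} glosses over; rescaling just one of $z,a$ would work equally well. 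Your remark that the permuting identity from (\ref{5.02}) goes through by the $8^n/8^{rn}$ computation is likewise fine, with the same implicit use of (\ref{4.4}) plus trilinearity to identify the limit $8^n f(x_1/2^n,x_2/2^n,x_3/2^n)\to D(x_1,x_2,x_3)$. In short: same architecture as the paper, but your treatment of the unitary derivation identity is a genuine, and welcome, repair of a step the paper merely asserts.
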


\begin{proof}
By the same reasoning as in the proof of Theorem \ref{thm4.1},
there is a unique $\mathbb{C}$-trilinear  mapping $D: A^3 \to A$ satisfying (\ref{4.4}) defined by 
$$D(x, z, a) : = \lim_{n\to \infty} 2^n f\left(\frac{x}{2^n}, z, a\right) $$
for all $x, z, a \in A$.

By the same reasoning as in the proof of Theorem \ref{thm4.1}, $ D(uy, z, a) = D(u,z, a)y + uD(y,z, a)$ for all $u\in U(A)$ and all $y,z, a\in A$. 
 Since $D$ is  $\mathbb
{C}$-linear in the first variable and each  $x \in A$ is a  finite linear combination of
unitary elements (see \cite{kr83}),  i.e., $x = \sum_{j=1}^m \lambda_j u_j \
(\lambda_j \in {\mathbb{C}}, $ $u_j \in U(A))$, 
 \begin{eqnarray*}
D(xy, z,a) & = & D (\sum_{j=1}^m \lambda_j  u_j y,  z,a ) = \sum_{j=1}^m  \lambda_j D(u_j y, z,a) = \sum_{j=1}^m 
\lambda_j  (D(u_j,z,a) y + u_j D(y, z,a) )\\ &= & (\sum_{j=1}^m  \lambda_j) D(u_j ,z,a) y + (\sum_{j=1}^m  \lambda_j u_j) D(y,z,a) 
= D(x,z,a) y + x D(y,z,a)
\end{eqnarray*} for all $x, y,z,a \in A$. 
 Thus the mapping $D : A^3 \to A$ is a permuting  triderivation.
\end{proof}

\begin{remark}
By a similar method in the proof of Theorem \ref{thm4.3}, one can obtain that if {\rm (\ref{5.2})} in Theorem \ref{thm4.3} is replaced by 
 \begin{eqnarray*}
\| f(uv, z, a) - f(u, z, a) v - uf(y,z, a) \|   \le  2 \theta\|z\|^r \|a\|^r
\end{eqnarray*}
 for  all $u, v \in U(A)$ and all $z,a\in A$, then  the mapping $D : A^3 \to A$ is a permuting  triderivation.
\end{remark}

\begin{corollary}
Let $r > 2$ and $\theta$ be nonnegative real numbers, and let $f :
A^3 \rightarrow A$ be a mapping satisfying {\rm (\ref{4.1})} and  $f(x,0, a)=f(0,z, a) = f(x,z,0) =0$ for all $x,z, a\in A$.
Then there exists a unique $\mathbb{C}$-trilinear mapping  $D : A^3 \rightarrow A$ 
satisfying {\rm (\ref{4.4})}. 

If, in addition, the mapping $f: A^3\to A$ satisfies {\rm (\ref{5.02})},  {\rm (\ref{5.2})} and $f(2x, z, a) = 2 f(x,z,a)$ for all $x,z,a\in A$, 
then the mapping $f: A^3 \to A$  is a permuting triderivation.
\end{corollary}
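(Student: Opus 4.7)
The plan is to reduce this corollary to Theorem \ref{thm4.3} by showing that the extra homogeneity assumption $f(2x, z, a) = 2f(x,z,a)$ forces the approximating $\mathbb{C}$-trilinear mapping $D$ produced by that theorem to coincide with $f$ itself. Once $D = f$ is established, the permuting triderivation conclusion for $f$ is immediate from Theorem \ref{thm4.3}, since the hypotheses (\ref{5.02}) and (\ref{5.2}) guarantee that $D$ is a permuting triderivation.

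More concretely, I would proceed as follows. First, apply Theorem \ref{thm4.3} directly: the inequality (\ref{4.1}), the vanishing conditions $f(x,0,a) = f(0,z,a) = f(x,z,0) = 0$, together with (\ref{5.02}) and (\ref{5.2}), produce a unique $\mathbb{C}$-trilinear map $D : A^3 \to A$ satisfying (\ref{4.4}) that is in fact a permuting triderivation. Recall from the proof of Theorem \ref{thm4.1} that $D$ is given by the explicit formula
\[
D(x,z,a) = \lim_{n\to\infty} 2^n f\!\left(\frac{x}{2^n}, z, a\right)
\]
for all $x, z, a \in A$. Next, exploit the hypothesis $f(2x, z, a) = 2f(x, z, a)$: by a one-line induction on $n$, this yields $f(x, z, a) = 2^n f(x/2^n, z, a)$ for every positive integer $n$ and all $x, z, a \in A$, so each term of the defining sequence of $D(x,z,a)$ already equals $f(x,z,a)$. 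Passing to the limit gives $D(x,z,a) = f(x,z,a)$ for all $x, z, a \in A$.

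Having identified $f$ with $D$, the conclusion is immediate: $f$ inherits from $D$ the properties of being $\mathbb{C}$-trilinear, of satisfying the Leibniz rule $f(xy, z, a) = f(x, z, a)y + xf(y, z, a)$ for all $x, y, z, a \in A$ (via the unitary-element decomposition argument used at the end of the proof of Theorem \ref{thm4.3}), and of being symmetric under any permutation of its three arguments. Hence $f$ is a permuting triderivation on $A$.

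There is essentially no serious obstacle here; the corollary is a formal consequence of Theorem \ref{thm4.3} together with the observation that the homogeneity condition collapses the defining limit of $D$ to the constant sequence $f(x,z,a)$. The only point that deserves a remark is that the induction $f(x,z,a) = 2^n f(x/2^n, z, a)$ requires replacing $x$ by $x/2^k$ in the assumption $f(2x, z, a) = 2f(x, z, a)$, which is legitimate since the assumption is stated for all $x \in A$; once this is noted, the identification $D = f$ and hence the proof of the corollary is routine.
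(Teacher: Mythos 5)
Your proposal is correct and matches the paper's own route: the paper obtains these corollaries precisely by noting that the hypothesis $f(2x,z,a)=2f(x,z,a)$ makes the defining sequence $2^{n}f(x/2^{n},z,a)$ constant, so the mapping $D$ from Theorem \ref{thm4.3} coincides with $f$, which therefore inherits the permuting triderivation property. No gaps; the argument is the same identification $D=f$ the paper invokes.
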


\begin{theorem}\label{thm4.4}
Let $r < 1$ and $\theta$ be nonnegative real numbers, and let $f :
A^3 \rightarrow A$ be a  mapping satisfying {\rm (\ref{4.1})}  and  $f(x,0,a)=f(0,z,a) =f(x,z,0)=0$ for all $x,z,a\in A$. 
Then there exists a unique $\mathbb{C}$-trilinear mapping  $D : A^3 \rightarrow A$
satisfying {\rm (\ref{4.5})}.

If, in addition, the mapping $f: A^3 \to A$ satisfies {\rm (\ref{5.02})} and {\rm (\ref{5.2})}, 
then the mapping $D: A^3 \to A$  is a permuting  triderivation.
\end{theorem}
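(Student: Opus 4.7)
The plan is to adapt the $C^*$-algebraic argument of Theorem \ref{thm4.3} by swapping the $r>2$ stability machinery of Theorem \ref{thm4.1} for the $r<1$ stability machinery of Theorem \ref{thm4.2}. Setting $\lambda=\mu=\eta=1$ in (\ref{4.1}) reduces $f$ to the hypothesis of Theorem \ref{thm4.2}, which already produces a unique tri-additive $D:A^3\to A$ satisfying (\ref{4.5}), realized as $D(x,z,a)=\lim_{n\to\infty}\tfrac{1}{2^n}f(2^nx,z,a)$.

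To upgrade $D$ to $\mathbb{C}$-trilinearity, I would set $y=x$ and $w=b=0$ in (\ref{4.1}) to extract the control
\[
\|f(2\lambda x,\mu z,\eta a)-2\lambda\mu\eta\, f(x,z,a)\|\le 2\theta\,\|x\|^r\|z\|^r\|a\|^r\qquad(\lambda,\mu,\eta\in\mathbb{T}^1),
\]
then replace $x$ by $2^nx$ and divide by $2^n$; the resulting bound is of order $2^{(r-1)n}$, which tends to $0$ since $r<1$, and combined with tri-additivity this forces $D(\lambda x,\mu z,\eta a)=\lambda\mu\eta\, D(x,z,a)$, so Lemma \ref{lm2.1} delivers $\mathbb{C}$-trilinearity. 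For the triderivation identity, I would combine (\ref{5.2}) with the scaling $y\mapsto 2^ny$ (keeping $u\in U(A)$ fixed) and divide by $2^n$ to get
\[
\bigl\|\tfrac{1}{2^n}f(2^nuy,z,a)-f(u,z,a)\,y-u\cdot\tfrac{1}{2^n}f(2^ny,z,a)\bigr\|\le\theta\bigl(\tfrac{1}{2^n}+2^{(r-1)n}\|y\|^r\bigr)\|z\|^r\|a\|^r,
\]
whose right-hand side vanishes as $n\to\infty$, yielding $D(uy,z,a)=D(u,z,a)y+uD(y,z,a)$ for every $u\in U(A)$ and all $y,z,a\in A$. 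Using the Kadison--Ringrose decomposition cited in Theorem \ref{thm4.3} (every element of the unital $C^*$-algebra $A$ is a finite $\mathbb{C}$-linear combination of unitaries), together with the $\mathbb{C}$-linearity of $D$ in its first argument, this extends to $D(xy,z,a)=D(x,z,a)y+xD(y,z,a)$ for all $x,y,z,a\in A$. The permutation invariance of $D$ then comes from (\ref{5.02}) by replacing each $x_i$ by $2^nx_i$ and dividing by $8^n$, since $8^{(r-1)n}\to 0$.

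The main obstacle is the transition from the unitary-only derivation identity to the derivation identity on all of $A$; this is precisely where the unital $C^*$-algebra hypothesis plays its essential role, and it is handled identically to the corresponding step in the proof of Theorem \ref{thm4.3}. Once this is in place, the rest is bookkeeping with the geometric factors $2^{(r-1)n}$ and $8^{(r-1)n}$, which all vanish because $r<1$.
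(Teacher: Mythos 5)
Your overall outline (stability from the $r<1$ analogue, trilinearity via Lemma \ref{lm2.1}, unitary case first and then the Kadison--Ringrose decomposition as in Theorem \ref{thm4.3}) is the right one, but the step establishing the derivation identity has a genuine gap. In your displayed inequality the middle term $f(u,z,a)\,y$ is not scaled at all, so letting $n\to\infty$ only yields
\begin{equation*}
D(uy,z,a)=f(u,z,a)\,y+uD(y,z,a),\qquad u\in U(A),\ y,z,a\in A,
\end{equation*}
with $f(u,z,a)$, not $D(u,z,a)$, in the middle. By (\ref{4.5}) the difference $f(u,z,a)-D(u,z,a)$ is only controlled by $\frac{2\theta}{2-2^r}\|z\|^r\|a\|^r$, which does not vanish, so you cannot simply replace $f(u,z,a)$ by $D(u,z,a)$; and the subsequent extension via $x=\sum_j\lambda_j u_j$ also breaks, since $\sum_j\lambda_j f(u_j,z,a)$ need not equal $D(x,z,a)$ ($f$ is not even additive in general). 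Setting $u=e$ and $y=e$ in the identity you actually obtain shows it only gives $f(u,z,a)=D(u,z,a)-uD(e,z,a)$, which is strictly weaker than what you need.

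The repair is to put the scaling into the second and third variables rather than into $y$: from (\ref{4.5}) and the tri-additivity of $D$ one gets, for $r<1$, $D(x,z,a)=\lim_{n\to\infty}4^{-n}f(x,2^nz,2^na)$ for all $x,z,a\in A$; then applying (\ref{5.2}) with $z,a$ replaced by $2^nz,2^na$ and dividing by $4^n$ gives the bound $4^{(r-1)n}\theta(1+\|y\|^r)\|z\|^r\|a\|^r\to0$, and now \emph{all three} terms converge to $D(uy,z,a)$, $D(u,z,a)y$ and $uD(y,z,a)$ respectively, which is exactly the unitary identity needed before invoking the decomposition into unitaries (this is also what ``the same reasoning as in Theorem \ref{thm4.3}'' must mean there, since $u$ itself cannot be rescaled). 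Alternatively, keep your step and then upgrade $f(u,z,a)$ to $D(u,z,a)$ by this second/third-variable scaling. The same identification, $\lim_{n\to\infty}8^{-n}f(2^nx_1,2^nx_2,2^nx_3)=D(x_1,x_2,x_3)$ justified via (\ref{4.5}) and tri-additivity, should also be made explicit in your permutation step for (\ref{5.02}), because $D$ was defined by scaling only the first variable.
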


\begin{proof}
The proof is similar to the proof of Theorem \ref{thm4.3}.
\end{proof}

\begin{corollary}
Let $r < 1$ and $\theta$ be nonnegative real numbers, and let $f :
A^3 \rightarrow A$ be a  mapping satisfying {\rm (\ref{4.1})}  and  $f(x,0,a)=f(0,z,a) =f(x,z,0)=0$ for all $x,z,a\in A$. 
Then there exists a unique $\mathbb{C}$-trilinear mapping  $D : A^3 \rightarrow A$
satisfying {\rm (\ref{4.5})}.

If, in addition, the mapping $f: A^3 \to A$ satisfies {\rm (\ref{5.02})},  {\rm (\ref{5.2})} and $f(2x, z, a) = 2 f(x,z,a)$ for all $x,z,a\in A$, 
then the mapping $f: A^3 \to A$  is a permuting  triderivation.
\end{corollary}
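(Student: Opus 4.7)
The plan is to reduce this corollary to Theorem \ref{thm4.4} by showing that the doubling condition $f(2x,z,a)=2f(x,z,a)$ forces the approximating trilinear map $D$ to coincide with $f$ itself. First I would invoke Theorem \ref{thm4.4} directly: the hypotheses on $f$ (namely (\ref{4.1}), the vanishing conditions at $0$, and when applicable (\ref{5.02}) and (\ref{5.2})) already produce a unique $\mathbb{C}$-trilinear mapping $D:A^3\to A$ satisfying (\ref{4.5}), and moreover $D$ is a permuting triderivation. Recall from the proof of Theorem \ref{thm4.4} (via the proof of the $r<1$ stability theorem) that $D$ is given explicitly by
\[
D(x,z,a)=\lim_{n\to\infty}\frac{1}{2^n}f(2^n x,z,a)
\]
for all $x,z,a\in A$.

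The key step is to observe that the hypothesis $f(2x,z,a)=2f(x,z,a)$ yields by a trivial induction on $n\geq 0$ that $f(2^n x,z,a)=2^n f(x,z,a)$, hence $\frac{1}{2^n}f(2^n x,z,a)=f(x,z,a)$ for every $n$. Passing to the limit in the defining formula gives $D(x,z,a)=f(x,z,a)$ for all $x,z,a\in A$. Therefore $f$ inherits every property of $D$ established in Theorem \ref{thm4.4}: it is $\mathbb{C}$-trilinear, it is a derivation in the first variable, and it is symmetric under permutations of its arguments. Consequently $f$ itself is a permuting triderivation, which is what the corollary asserts.

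There is essentially no obstacle here; the only thing to be careful about is to quote Theorem \ref{thm4.4} in exactly the form needed (i.e.\ with the same set of hypotheses that are assumed in the corollary) so that the full strength of the conclusion about $D$ being a permuting triderivation is available. The argument is parallel to the corollaries following Theorems \ref{thm4.1}, \ref{thm5.1}, and \ref{thm5.2}, and the structural point is the same in each case: the doubling identity pins down $f$ as a fixed point of the averaging procedure used to construct $D$.
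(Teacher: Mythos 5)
Your proposal is correct and matches the paper's intended argument: the paper's remark preceding these corollaries is precisely that the doubling condition $f(2x,z,a)=2f(x,z,a)$ forces $D(x,z,a)=f(x,z,a)$, so $f$ inherits the permuting triderivation property established for $D$ in Theorem \ref{thm4.4}. You also correctly use the $r<1$ construction $D(x,z,a)=\lim_{n\to\infty}2^{-n}f(2^{n}x,z,a)$ (rather than the $r>2$ formula), so the identification $D=f$ goes through exactly as in the paper.
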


Similarly, we can obtain the following results.

\begin{theorem}\label{thm5.3}
Let $r > 2$ and $\theta$ be nonnegative real numbers, and let $f :
A^3 \rightarrow A$ be a  mapping satisfying {\rm (\ref{5.1})}  and  $f(x,0,a)=f(0,z,a) =f(x,z,0)=0$ for all $x,z,a\in A$. 
Then there exists a unique $\mathbb{C}$-trilinear mapping  $D : A^3 \rightarrow A$
satisfying {\rm (\ref{5.4})}. 

If, in addition, the mapping $f: A^3 \to A$ satisfies {\rm (\ref{5.02})} and  {\rm (\ref{5.2})},
then the mapping $D: A^3 \to A$  is a  permuting triderivation.
\end{theorem}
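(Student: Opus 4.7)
My plan is to mirror Theorem \ref{thm4.3} step by step, using the bound and scaling rate produced by Theorem \ref{thm5.1} in place of those of Theorem \ref{thm4.1}. First, setting $\lambda=\mu=\eta=1$ in (\ref{5.1}) recovers the hypothesis (\ref{3.4}) with range $A$, so Theorem \ref{thm5.1} itself already supplies a unique tri-additive mapping $D:A^3\to A$ satisfying (\ref{5.4}), namely $D(x,z,a):=\lim_{n\to\infty} 2^n f(x/2^n,z,a)$.

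To promote tri-additivity to $\mathbb{C}$-trilinearity, I would substitute $y=w=b=0$ in (\ref{5.1}) and use $f(x,0,a)=f(0,z,a)=f(x,z,0)=0$; the $s$-term on the right-hand side vanishes and what remains is
$$\left\|4 f\!\left(\tfrac{\lambda x}{2},\mu z,\eta a\right) - 2\lambda\mu\eta f(x,z,a)\right\| \le \theta \|x\|^r\|z\|^r\|a\|^r$$
for all $\lambda,\mu,\eta\in\mathbb{T}^1$. Replacing $x$ by $x/2^{n-1}$, multiplying the resulting inequality by $2^{n-1}$, and letting $n\to\infty$ (the bound is of order $2^{(n-1)(1-r)}\to 0$ since $r>2>1$) gives $D(\lambda x,\mu z,\eta a)=\lambda\mu\eta D(x,z,a)$ for every $\lambda,\mu,\eta\in\mathbb{T}^1$; Lemma \ref{lm2.1} then upgrades $D$ to a $\mathbb{C}$-trilinear map.

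The permutation symmetry $D(x_{\sigma(1)},x_{\sigma(2)},x_{\sigma(3)})=D(x_1,x_2,x_3)$ follows from (\ref{5.02}) by the same device as in Theorem \ref{thm4.1}: rescale each $x_i$ by $1/2^n$, multiply the inequality by $8^n$, and let $n\to\infty$, using that $8^n/8^{rn}\to 0$ because $r>2$. For the derivation property I would first apply (\ref{5.2}) with $y$ replaced by $y/2^n$, keeping $u\in U(A)$ fixed (since $u/2^n$ is no longer a unitary), and pass to the limit to obtain
$$D(uy,z,a)=D(u,z,a)\,y + u\,D(y,z,a)$$
for every $u\in U(A)$ and $y,z,a\in A$, exactly as in Theorem \ref{thm4.3}. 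Since $D$ is $\mathbb{C}$-linear in the first slot and every $x\in A$ can be written as a finite complex linear combination of unitaries (cf.\ \cite{kr83}), expressing $x=\sum_{j=1}^m\lambda_j u_j$ propagates the identity $D(xy,z,a)=D(x,z,a)y+xD(y,z,a)$ to arbitrary $x,y\in A$, completing the verification that $D$ is a permuting triderivation.

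The main obstacle is the derivation step: the hypothesis (\ref{5.2}) only controls $f$ on the slice $U(A)\times A\times A\times A$, so the scaling-in-the-first-argument trick that worked for (\ref{4.2}) in Theorem \ref{thm4.1} is unavailable, and one must instead rely on $\mathbb{C}$-linearity of $D$ in the first variable together with the unitary spanning property of the unital $C^*$-algebra $A$ to push the Leibniz identity from $U(A)$ to all of $A$.
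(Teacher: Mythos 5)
Most of your plan is sound and is exactly the route the paper intends for Theorem \ref{thm5.3}: the stability and the scalar identity $D(\lambda x,\mu z,\eta a)=\lambda\mu\eta D(x,z,a)$ obtained from the substitution $y=w=b=0$ in (\ref{5.1}) (the $s$-term indeed vanishes there), Lemma \ref{lm2.1}, the permutation symmetry from (\ref{5.02}) by rescaling all three arguments, and the extension of the Leibniz rule from $U(A)$ to all of $A$ by writing $x=\sum_{j=1}^m\lambda_j u_j$ and using $\mathbb{C}$-linearity in the first slot, as in Theorem \ref{thm4.3}.

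The gap is in how you get the Leibniz identity on unitaries from (\ref{5.2}). Replacing $y$ by $y/2^n$ while keeping $u,z,a$ fixed cannot work: to turn $f(uy/2^n,z,a)$ and $f(y/2^n,z,a)$ into $D(uy,z,a)$ and $D(y,z,a)$ you must multiply the inequality by $2^n$, but then the right-hand side becomes $2^n\theta\bigl(1+2^{-rn}\|y\|^r\bigr)\|z\|^r\|a\|^r$, which diverges because of the constant $1$ in $1+\|y\|^r$; moreover the middle term $2^n f(u,z,a)\,(y/2^n)=f(u,z,a)\,y$ never becomes $D(u,z,a)\,y$, since none of $u,z,a$ is rescaled, so even formally you would only reach an identity mixing $f$ and $D$. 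The working device is to rescale the second and third variables instead: put $z/2^n$ and $a/2^n$ in (\ref{5.2}) and multiply by $4^n$. Since $D$ is tri-additive and satisfies (\ref{5.4}), one has $\lim_{n\to\infty}4^n f\bigl(x,\frac{z}{2^n},\frac{a}{2^n}\bigr)=D(x,z,a)$ for each fixed $x$, while the error $4^{n(1-r)}\theta(1+\|y\|^r)\|z\|^r\|a\|^r\to 0$ because $r>1$; this yields $D(uy,z,a)=D(u,z,a)y+uD(y,z,a)$ for all $u\in U(A)$ and $y,z,a\in A$, after which your unitary-spanning step finishes the proof. (The paper itself glosses this point by saying ``the same reasoning as in Theorem \ref{thm4.1}'', which literally would rescale the first argument and is unavailable since $u/2^n\notin U(A)$; the $z,a$-rescaling above is the argument that actually closes it.)
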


\begin{corollary}
Let $r > 2$ and $\theta$ be nonnegative real numbers, and let $f :
A^3 \rightarrow A$ be a  mapping satisfying {\rm (\ref{5.1})}  and  $f(x,0,a)=f(0,z,a) =f(x,z,0)=0$ for all $x,z,a\in A$. 
Then there exists a unique $\mathbb{C}$-trilinear mapping  $D : A^3 \rightarrow A$
satisfying {\rm (\ref{5.4})}. 

If, in addition, the mapping $f: A^3 \to A$ satisfies {\rm (\ref{5.02})},  {\rm (\ref{5.2})} and $f(2x, z, a) = 2 f(x,z,a)$ for all $x,z,a\in A$, 
then the mapping $f: A^3 \to A$  is a  permuting triderivation.
\end{corollary}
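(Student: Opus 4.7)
The plan is to deduce the corollary directly from Theorem \ref{thm5.3} by observing that the extra hypothesis $f(2x,z,a)=2f(x,z,a)$ forces the approximating tri-additive mapping $D$ to coincide with $f$ itself, so that every property of $D$ established in Theorem \ref{thm5.3} is automatically inherited by $f$.

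First, I would invoke Theorem \ref{thm5.3} with the given hypotheses on $f$ (namely the inequality (\ref{5.1}) and the vanishing conditions $f(x,0,a)=f(0,z,a)=f(x,z,0)=0$, supplemented by (\ref{5.02}) and (\ref{5.2})). This yields the existence of a unique $\mathbb{C}$-trilinear mapping $D:A^3\to A$ satisfying the estimate (\ref{5.5}), and moreover $D$ is already a permuting triderivation on $A$. Recall from the construction inside the proofs of Theorems \ref{thm2.3} and \ref{thm5.1} that $D$ is given explicitly by
\[
D(x,z,a) \;=\; \lim_{n\to\infty} 2^{n} f\!\left(\tfrac{x}{2^{n}},\, z,\, a\right)
\]
for all $x,z,a \in A$.

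Next, I would use the hypothesis $f(2x,z,a)=2 f(x,z,a)$ to show that $D=f$ identically. Replacing $x$ by $x/2$ in the identity gives $f(x,z,a)=2 f(x/2,z,a)$, and iterating $n$ times yields $f(x,z,a)=2^{n} f(x/2^{n},z,a)$ for every nonnegative integer $n$ and all $x,z,a\in A$. Letting $n\to\infty$ in this equality and comparing with the explicit formula for $D$, we conclude $D(x,z,a)=f(x,z,a)$ for all $x,z,a\in A$. Since $D$ is a permuting triderivation on $A$ by Theorem \ref{thm5.3}, so is $f$.

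I do not anticipate any real obstacle here: the whole corollary reduces to the elementary observation that $2$-homogeneity in the first slot, combined with the defining limit of $D$, collapses the approximation into an equality $D=f$. The non-trivial work (handling (\ref{5.1}), verifying $\mathbb{C}$-trilinearity through the Banach-algebra argument using unitary decompositions, and checking the Leibniz and permutation identities in the limit) has already been carried out inside Theorem \ref{thm5.3}, so the proof of the corollary is essentially a one-line reduction plus the homogeneity computation above.
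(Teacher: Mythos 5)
Your proposal is correct and follows essentially the same route as the paper, which obtains this corollary from Theorem \ref{thm5.3} by noting that $f(2x,z,a)=2f(x,z,a)$ forces $f(x,z,a)=2^{n}f(x/2^{n},z,a)$ for all $n$, hence $D=f$ in the defining limit, so $f$ inherits the permuting triderivation properties of $D$. One small slip: since this is the $r>2$ case, the estimate furnished by Theorem \ref{thm5.3} is {\rm (\ref{5.4})}, not {\rm (\ref{5.5})}, but this does not affect the argument.
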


\begin{theorem}\label{thm5.4}
Let $r < 1$ and $\theta$ be nonnegative real numbers, and let $f :
A^3 \rightarrow A$ be a mapping satisfying {\rm (\ref{5.1})}   and  $f(x,0,a)=f(0,z,a)  = f(x,z,0)=0$ for all $x,z,a\in A$. 
Then there exists a unique $\mathbb{C}$-trilinear  mapping  $D : A^3 \rightarrow A$ 
satisfying {\rm (\ref{5.5})}. 

If, in addition, the mapping $f: A^3\to A$ satisfies {\rm (\ref{5.02})} and  {\rm (\ref{5.2})},
then  the mapping $D: A^3 \to A$  is a permuting  triderivation.
\end{theorem}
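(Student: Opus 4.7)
My plan is to follow the blueprint of Theorem \ref{thm4.3}, adjusting for the functional inequality (\ref{5.1}) and the $r<1$ scaling direction, in which the defining limit for $D$ moves outward rather than inward.

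Specializing (\ref{5.1}) to $\lambda=\mu=\eta=1$ reduces the hypothesis to (\ref{3.4}), so the $r<1$ stability theorem of Section 3 produces a unique tri-additive $D\colon A^3\to A$ satisfying (\ref{5.5}), given by $D(x,z,a)=\lim_{n\to\infty}2^{-n}f(2^n x,z,a)$. To promote $D$ to a $\mathbb{C}$-trilinear map, I would set $y=w=b=0$ in (\ref{5.1}) and use $f(x,0,0)=f(0,z,0)=f(0,0,a)=0$ to obtain
\[
\bigl\|4f(\tfrac{\lambda x}{2},\mu z,\eta a)-2\lambda\mu\eta\,f(x,z,a)\bigr\|\le\theta\|x\|^r\|z\|^r\|a\|^r,
\]
then substitute $x\mapsto 2^n x$, divide by $2^n$, and pass to the limit, whose perturbation is $O(2^{(r-1)n})\to 0$. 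This yields $D(\lambda x,\mu z,\eta a)=\lambda\mu\eta\,D(x,z,a)$ for all $\lambda,\mu,\eta\in\mathbb{T}^1$, and Lemma \ref{lm2.1} then delivers $\mathbb{C}$-trilinearity.

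For the triderivation law under the additional hypotheses (\ref{5.2}) and (\ref{5.02}), the delicate point is that (\ref{5.2}) requires its first argument to be a unitary, so rescaling $u$ itself is forbidden. Instead, fixing $u\in U(A)$ I would apply (\ref{5.2}) with $y$ replaced by $2^n y$ to obtain
\[
\bigl\|f(u\cdot 2^n y,z,a)-f(u,z,a)\cdot 2^n y-u f(2^n y,z,a)\bigr\|\le\theta(1+2^{rn}\|y\|^r)\|z\|^r\|a\|^r.
\]
Dividing by $2^n$ and letting $n\to\infty$ yields $D(uy,z,a)=f(u,z,a)\,y+u\,D(y,z,a)$, which differs from the desired relation by the residual $(f(u,z,a)-D(u,z,a))y$. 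To eliminate it, I set $E(y,z,a):=D(uy,z,a)-D(u,z,a)\,y-u\,D(y,z,a)=(f(u,z,a)-D(u,z,a))y$; since $D$ is $\mathbb{C}$-trilinear, so is $E$. Evaluating at $(y,2^n z,a)$ and invoking (\ref{5.5}) together with $D(u,2^n z,a)=2^n D(u,z,a)$ produces $\|E(y,z,a)\|\le\tfrac{2^r\theta}{2(2-2^r)}\,2^{(r-1)n}\|z\|^r\|a\|^r\|y\|\to 0$. Hence $D(uy,z,a)=D(u,z,a)\,y+u\,D(y,z,a)$ for every $u\in U(A)$, and the Kadison--Ringrose unitary decomposition $x=\sum_j\lambda_j u_j$ combined with the $\mathbb{C}$-linearity of $D$ in the first variable extends the identity to arbitrary $x\in A$.

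Finally, for the permutation law I would first observe that (\ref{5.5}) and the trilinearity of $D$ force $8^{-n}f(2^n x,2^n z,2^n a)\to D(x,z,a)$, because $\|f(2^n x,2^n z,2^n a)-D(2^n x,2^n z,2^n a)\|$ is $O(8^{rn})$ and $D(2^n x,2^n z,2^n a)=8^n D(x,z,a)$. Combining this identity with (\ref{5.02}) delivers
\[
\|D(x_{\sigma(1)},x_{\sigma(2)},x_{\sigma(3)})-D(x_1,x_2,x_3)\|\le\lim_{n\to\infty}\theta\cdot 8^{(r-1)n}\|x_1\|^r\|x_2\|^r\|x_3\|^r=0,
\]
so $D$ is a permuting triderivation. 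The principal obstacle throughout is the unitary restriction in (\ref{5.2}); I expect to resolve it exactly as outlined, by rescaling only the non-unitary factor $y$ and then cancelling the residual $\mathbb{C}$-trilinear error via the stability bound (\ref{5.5}). The remaining steps are routine adaptations of the arguments in Theorems \ref{thm4.1}--\ref{thm4.3}.
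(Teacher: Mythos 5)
Your proposal is correct and follows essentially the same route as the paper, which proves this statement only by reference to the arguments of Theorems \ref{thm2.3}, \ref{thm4.1} and \ref{thm4.3}: the $r<1$ stability result yields $D(x,z,a)=\lim_{n\to\infty}2^{-n}f(2^nx,z,a)$, the substitution $y=w=b=0$ together with Lemma \ref{lm2.1} gives $\mathbb{C}$-trilinearity, and rescaling arguments plus the Kadison--Ringrose decomposition of elements into linear combinations of unitaries give the permuting triderivation property. Your explicit treatment of the unitary restriction in {\rm (\ref{5.2})} --- rescaling only $y$ and then eliminating the residual term $(f(u,z,a)-D(u,z,a))y$ by rescaling $z$ and invoking {\rm (\ref{5.5})} --- is a sound and careful filling-in of a step the paper dispatches with ``the same reasoning as in the proof of Theorem \ref{thm4.1}.''
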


\begin{corollary}
Let $r < 1$ and $\theta$ be nonnegative real numbers, and let $f :
A^3 \rightarrow A$ be a mapping satisfying {\rm (\ref{5.1})}   and  $f(x,0,a)=f(0,z,a)  = f(x,z,0)=0$ for all $x,z,a\in A$. 
Then there exists a unique $\mathbb{C}$-trilinear  mapping  $D : A^3 \rightarrow A$ 
satisfying {\rm (\ref{5.5})}. 

If, in addition, the mapping $f: A^3\to A$ satisfies {\rm (\ref{5.02})},   {\rm (\ref{5.2})} and $f(2x, z, a) = 2 f(x,z,a)$ for all $x,z,a\in A$, 
then  the mapping $f: A^3 \to A$  is a permuting  triderivation.
\end{corollary}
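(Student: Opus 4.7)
The plan is to derive this corollary directly from Theorem \ref{thm5.4}, which already supplies the substantive content, and then to use the rescaling hypothesis $f(2x,z,a)=2f(x,z,a)$ to identify $f$ with the approximating trilinear mapping.

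First, I would apply Theorem \ref{thm5.4} to the mapping $f$. Under (\ref{5.1}) together with the degeneracy conditions $f(x,0,a)=f(0,z,a)=f(x,z,0)=0$, this yields a unique $\mathbb{C}$-trilinear mapping $D:A^3 \to A$ satisfying the estimate (\ref{5.5}), which is exactly the first assertion. Moreover, since $f$ is assumed to satisfy (\ref{5.02}) and (\ref{5.2}), Theorem \ref{thm5.4} further concludes that $D$ is a permuting triderivation on $A$. Recall from the proof scheme (modeled on that of Theorem \ref{thm4.1}, adapted to the $r<1$ case as in the unnamed theorem of Section \ref{sec:3}) that $D$ is realized as the pointwise limit
\[
D(x,z,a) \;=\; \lim_{n\to\infty} \frac{1}{2^n}\, f(2^n x, z, a), \qquad x,z,a \in A.
\]

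Second, I would show that $f = D$. Using the hypothesis $f(2x,z,a)=2f(x,z,a)$, a straightforward induction on $n$ gives $f(2^n x, z, a) = 2^n f(x,z,a)$ for every positive integer $n$ and all $x,z,a \in A$. Dividing by $2^n$ yields $\frac{1}{2^n}\, f(2^n x, z, a) = f(x,z,a)$ for every $n$, so passing to the limit $n \to \infty$ in the displayed formula gives $D(x,z,a) = f(x,z,a)$ for all $x,z,a \in A$. Hence $f$ coincides with the permuting triderivation $D$ and is therefore itself a permuting triderivation, which is the second assertion.

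There is no genuine obstacle here; the argument is a cosmetic repackaging of Theorem \ref{thm5.4} in the special situation where the dilation invariance $f(2x,z,a)=2f(x,z,a)$ forces the limit defining $D$ to be constant in $n$. The only small point that must be checked is that the formula for $D$ used in the proof of Theorem \ref{thm5.4} is the forward-iteration form $\frac{1}{2^n} f(2^n x, z, a)$ rather than the backward-iteration form $2^n f(x/2^n, z, a)$; this is indeed the case because the regime $r<1$ forces one to use the former iteration so that the geometric estimate controlling the Cauchy property of the approximating sequence converges.
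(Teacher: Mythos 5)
Your proposal is correct and matches the paper's intended argument: the paper obtains these corollaries by invoking the corresponding theorem (here Theorem \ref{thm5.4}) and then noting, as stated before the first such corollary, that $f(2x,z,a)=2f(x,z,a)$ forces $D(x,z,a)=f(x,z,a)$, exactly as you argue via $\frac{1}{2^n}f(2^n x,z,a)=f(x,z,a)$. Your side remark about which iteration defines $D$ is fine but inessential, since the homogeneity hypothesis would identify $f$ with $D$ under either the forward or backward iteration.
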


\section{Permuting trihomomorphisms in Banach algebras}\label{sec:5}

Now, we investigate permuting trihomomorphisms in complex Banach algebras and unital $C^*$-algebras
 associated with the tri-additive $s$-functional  inequalities (\ref{0.1}) and (\ref{0.2}).

\begin{theorem}\label{thm8.1}
Let $r > 2$ and $\theta$ be nonnegative real numbers, and let $f :
A^3 \rightarrow B$ be a  mapping satisfying $f(x,0, a)=f(0,z, a) = f(x,z,0)=0$ and {\rm (\ref{4.1})}. 
Then there exists a unique $\mathbb{C}$-trilinear   mapping  $H : A^3 \rightarrow B$
satisfying {\rm (\ref{4.4})}, where $D$ is replaced by $H$ in  {\rm (\ref{4.4})}.

If, in addition, the mapping $f: A^3 \to B$ satisfies {\rm (\ref{5.02})} and 
\begin{eqnarray}\label{8.2}
\| f(x y, zw, ab) -  f(x, z,a) f(y,w, b)  \|   \le  \theta ( \|x\|^r +  \|y\|^r ) ( \|z\|^r +  \|w\|^r )  ( \|a\|^r +  \|b\|^r )  
\end{eqnarray}
 for  all $x, y, z,w,a,b \in A$,
then the mapping $H: A^3 \to B$ is a permuting   trihomomorphism. 
\end{theorem}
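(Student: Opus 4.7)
The plan is to follow the blueprint of Theorem~\ref{thm4.1}, replacing the derivation identity in the final step by the homomorphism identity coming from (\ref{8.2}).

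First, I would set $\lambda=\mu=\eta=1$ in (\ref{4.1}) and apply Theorem~\ref{thm2.3} with target space $Y=B$ to obtain the unique tri-additive mapping $H:A^3\to B$ given by
$$H(x,z,a):=\lim_{n\to\infty} 2^n f\left(\frac{x}{2^n},z,a\right),$$
satisfying the analogue of (\ref{4.4}) with $D$ replaced by $H$. Then, mimicking the $\mathbb{T}^1$-invariance step of Theorem~\ref{thm4.1}---letting $y=x$ and $w=b=0$ in (\ref{4.1}) to control $\|f(2\lambda x,\mu z,\eta a)-2\lambda\mu\eta f(x,z,a)\|$ and passing to the limit---I would conclude that $H(\lambda x,\mu z,\eta a)=\lambda\mu\eta H(x,z,a)$ for all $\lambda,\mu,\eta\in\mathbb{T}^1$. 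Lemma~\ref{lm2.1} then upgrades $H$ to a $\mathbb{C}$-trilinear mapping.

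The permutation invariance would be handled exactly as in Theorem~\ref{thm4.1}: replacing each $x_i$ by $x_i/2^n$ in (\ref{5.02}) and multiplying by $8^n$ produces an error bounded by $\theta\cdot(8/8^r)^n\|x_1\|^r\|x_2\|^r\|x_3\|^r$, which vanishes as $n\to\infty$ since $r>1$.

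The main new step, and where I expect the only genuine obstacle, is proving $H(xy,zw,ab)=H(x,z,a)H(y,w,b)$. The key trick is to note that, taking the subsequence $m=2n$ in the definition of $H$, one has $H(xy,zw,ab)=\lim_{n\to\infty} 4^n f(xy/4^n,zw,ab)$, while joint continuity of multiplication in $B$ yields
$$H(x,z,a)H(y,w,b)=\lim_{n\to\infty} 4^n f\left(\frac{x}{2^n},z,a\right)f\left(\frac{y}{2^n},w,b\right).$$
Substituting $x\mapsto x/2^n$ and $y\mapsto y/2^n$ in (\ref{8.2}) and multiplying by $4^n$ bounds the difference of the two expressions under the limit by $\theta\cdot(4/2^r)^n(\|x\|^r+\|y\|^r)(\|z\|^r+\|w\|^r)(\|a\|^r+\|b\|^r)$, which tends to zero precisely because $r>2$. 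This scaling match---the product on the right naturally carries a factor $2^n\cdot 2^n=4^n$, which forces the doubly rescaled argument $xy/4^n$ on the left---is the sole reason the hypothesis $r>2$ (rather than $r>1$) must be imposed.
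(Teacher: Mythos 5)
Your proposal is correct and follows essentially the same route as the paper: obtain $H(x,z,a)=\lim_{n\to\infty}2^n f(x/2^n,z,a)$ via Theorem~\ref{thm2.3} and the $\mathbb{T}^1$-scaling argument of Theorem~\ref{thm4.1}, handle (\ref{5.02}) with the $8^n/8^{rn}$ estimate, and derive $H(xy,zw,ab)=H(x,z,a)H(y,w,b)$ by rescaling $x,y$ by $2^{-n}$ in (\ref{8.2}) and using the bound $4^n\theta/2^{rn}\to 0$ for $r>2$. Your explicit justifications (the subsequence $m=2n$ and joint continuity of multiplication) and the remark that the multiplicative step is the only place requiring $r>2$ rather than $r>1$ are accurate refinements of what the paper leaves implicit.
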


\begin{proof}
By the same reasoning as in the proof of Theorem \ref{thm4.1}, there is a unique $\mathbb{C}$-trilinear  mapping $H: A^3 \to B$, which is defined by 
$$H(x,z,a) = \lim_{n\to \infty} 2^n f\left(\frac{x}{2^n}, z,a\right)$$ for all $x,z,a\in A$.

It follows from {\rm (\ref{8.2})} that
\begin{eqnarray*}
 &&  \| H (xy, zw, ab) - H(x, z, a)H(y,w, b) \| \\ &&  =   \lim_{n\to\infty} 4^n \left\|
 f\left(\frac{xy}{2^n\cdot 2^n}, zw, ab\right) -  f\left(\frac{x}{2^n}, z, a\right) f\left(\frac{y}{2^n}, w, b\right)
 \right\|  \\ &&  \le    \lim_{n\to \infty} \frac{4^n \theta}{2^{rn}} ( \|x\|^r +  \|y\|^r ) ( \|z\|^r +  \|w\|^r )  ( \|a\|^r +  \|b\|^r )  = 0
\end{eqnarray*}
for all $x, y, z,w,a, b \in A$. Thus $$H (xy, zw, ab) = H(x, z,a) H(y, w,b) $$
for all $x, y, z, w, a, b \in A$.

The rest of the proof is similar to the proof of Theorem \ref{thm4.1}.
Hence the  mapping $H: A^3 \rightarrow
B$ is a permuting trihomomorphism.
\end{proof}

In Theorem \ref{thm8.1}, if $f(2x, z, a) = 2 f(x,z,a)$ for all $x,z,a\in A$, then one can easily show that $H(x,z,a) = f(x,z,a)$ for all $x,z,a \in A$.
 Thus we can obtain the following corollary.

\begin{corollary}
Let $r > 2$ and $\theta$ be nonnegative real numbers, and let $f :
A^3 \rightarrow B$ be a  mapping satisfying $f(x,0, a)=f(0,z, a) = f(x,z,0)=0$ and {\rm (\ref{4.1})}. 
Then there exists a unique $\mathbb{C}$-trilinear   mapping  $H : A^3 \rightarrow B$
satisfying {\rm (\ref{4.4})}, where $D$ is replaced by $H$ in  {\rm (\ref{4.4})}.

If, in addition, the mapping $f: A^3 \to B$ satisfies {\rm (\ref{5.02})}, {\rm (\ref{8.2})}   and $f(2x, z, a) = 2 f(x,z,a)$ for all $x,z,a\in A$, 
then the mapping $f: A^3 \to B$ is a permuting   trihomomorphism. 
\end{corollary}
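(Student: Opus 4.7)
The plan is to observe that this corollary is an immediate specialization of Theorem~\ref{thm8.1}: the hypotheses are identical except for the additional $2$-homogeneity condition $f(2x,z,a)=2f(x,z,a)$, and that condition collapses the approximating trilinear map $H$ onto $f$ itself.

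First I would invoke Theorem~\ref{thm8.1}. Because $f$ satisfies the vanishing conditions $f(x,0,a)=f(0,z,a)=f(x,z,0)=0$ together with the inequality (\ref{4.1}), the theorem provides a unique $\mathbb{C}$-trilinear mapping $H:A^3\to B$ satisfying (\ref{4.4}) with $D$ replaced by $H$, and this $H$ is defined by
\[
H(x,z,a)=\lim_{n\to\infty} 2^n f\!\left(\frac{x}{2^n},z,a\right)
\]
for all $x,z,a\in A$.

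The key small step is to identify $H$ with $f$. Iterating the homogeneity hypothesis $f(2x,z,a)=2f(x,z,a)$ (applied at the point $x/2^{n+1}$) gives, by induction on $n$, the identity $f(x,z,a)=2^n f(x/2^n,z,a)$ for every $n\ge 0$ and every $x,z,a\in A$. Hence the sequence defining $H$ is constantly equal to $f$, and therefore $H(x,z,a)=f(x,z,a)$ for all $x,z,a\in A$.

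Finally, since the additional hypotheses (\ref{5.02}) and (\ref{8.2}) are in force, the ``if, in addition'' clause of Theorem~\ref{thm8.1} yields that $H$ is a permuting trihomomorphism; combined with $H=f$, this gives that $f$ itself is a permuting trihomomorphism. There is no real obstacle here; the corollary is the trivial specialization of Theorem~\ref{thm8.1} in which the error bound $\frac{2\theta}{2^r-2}\|x\|^r\|z\|^r\|a\|^r$ is in fact zero because $f$ already coincides with its own trilinear approximation.
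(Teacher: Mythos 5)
Your proposal is correct and matches the paper's own route: the paper also obtains the corollary by invoking Theorem~\ref{thm8.1} and noting that the homogeneity $f(2x,z,a)=2f(x,z,a)$ forces $H(x,z,a)=\lim_{n\to\infty}2^n f(x/2^n,z,a)=f(x,z,a)$, so the permuting trihomomorphism property of $H$ transfers to $f$. No gaps; the induction step identifying the defining sequence with $f$ is exactly the "one can easily show" remark preceding the corollary in the paper.
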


\begin{theorem}\label{thm8.2}
Let $r < 1$ and $\theta$ be nonnegative real numbers, and let $f :
A^3 \rightarrow B$ be a  mapping satisfying {\rm (\ref{4.1})} and  $f(x,0,a)=f(0,z,a) =f(x,z,0)=0$ for all $x,z,a\in A$. 
Then there exists a unique  $\mathbb{C}$-trilinear  mapping  $H : A^3 \rightarrow B$
satisfying  {\rm (\ref{4.5})}, where $D$ is replaced by $H$ in  {\rm (\ref{4.5})}.

If, in addition, the mapping $f: A^3 \to B$ satisfies  {\rm (\ref{5.02})} and  {\rm (\ref{8.2})},
then the mapping $H: A^3 \to B$ is a permuting trihomomorphism.  
\end{theorem}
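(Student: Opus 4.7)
The plan is to adapt the proof of Theorem \ref{thm8.1} to the sub-unit regime $r<1$, paralleling the way Theorem \ref{thm4.2} relates to Theorem \ref{thm4.1}: every key estimate will be a hyperstability-type bound of the form $O(c^{(r-d)n}) \to 0$, with the direction of iteration reversed (scaling up rather than down).

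First, setting $\lambda=\mu=\eta=1$ in (\ref{4.1}) reduces to the hypothesis of the $r<1$ analogue of Theorem \ref{thm2.3} (applied with $B$ in place of $Y$), yielding a unique tri-additive mapping
$$H(x,z,a) := \lim_{n\to\infty}\frac{1}{2^n}f(2^n x, z, a)$$
that satisfies (\ref{4.5}) with $D$ replaced by $H$. To upgrade $H$ to a $\mathbb{C}$-trilinear mapping, I would set $y=x$ and $w=b=0$ in (\ref{4.1}): using $f(0,\cdot,\cdot)=f(\cdot,0,\cdot)=f(\cdot,\cdot,0)=0$, the $s$-term on the right collapses entirely, leaving
$$\|f(2\lambda x,\mu z,\eta a)-2\lambda\mu\eta f(x,z,a)\|\le 2\theta\|x\|^r\|z\|^r\|a\|^r$$
for every $\lambda,\mu,\eta\in\mathbb{T}^1$. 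Substituting $x\mapsto 2^n x$ and dividing by $2^{n+1}$, the right-hand side is $O(2^{(r-1)n})\to 0$ since $r<1$, while the left-hand side tends to $\|H(\lambda x,\mu z,\eta a)-\lambda\mu\eta H(x,z,a)\|$; Lemma \ref{lm2.1} then delivers $\mathbb{C}$-trilinearity.

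For the trihomomorphism identity $H(xy,zw,ab)=H(x,z,a)H(y,w,b)$, I would combine two limit representations of $H$. Taking $m=2n$ in the defining limit and exploiting $4^n xy=(2^n x)(2^n y)$ gives
$$H(xy,zw,ab)=\lim_{n\to\infty}\frac{1}{4^n}f\bigl((2^n x)(2^n y),\,zw,\,ab\bigr),$$
while continuity of multiplication in $B$ yields
$$H(x,z,a)H(y,w,b)=\lim_{n\to\infty}\frac{1}{4^n}f(2^n x,z,a)\,f(2^n y,w,b).$$
Subtracting and invoking (\ref{8.2}) with $(2^n x,2^n y)$ in place of $(x,y)$ bounds the difference by $2^{(r-2)n}\theta(\|x\|^r+\|y\|^r)(\|z\|^r+\|w\|^r)(\|a\|^r+\|b\|^r)$, which vanishes since $r<1<2$.

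For the permutation symmetry, I would instead need the alternative representation $H(x_1,x_2,x_3)=\lim 8^{-n}f(2^n x_1,2^n x_2,2^n x_3)$. This follows from the $\mathbb{C}$-trilinearity just established (giving $H(2^n x_1,2^n x_2,2^n x_3)=8^n H(x_1,x_2,x_3)$) combined with the stability bound (\ref{4.5}) applied at the scaled point, the residual error being $O(8^{(r-1)n})\to 0$. Then (\ref{5.02}) applied at $(2^n x_1,2^n x_2,2^n x_3)$ bounds $\|H(x_{\sigma(1)},x_{\sigma(2)},x_{\sigma(3)})-H(x_1,x_2,x_3)\|$ by $\theta\cdot 8^{(r-1)n}\|x_1\|^r\|x_2\|^r\|x_3\|^r\to 0$. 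The main technical obstacle throughout is the rigorous justification of these alternative limit representations of $H$ (both the $\tfrac{1}{4^n}$-representation for multiplicativity and the $\tfrac{1}{8^n}$-representation for symmetry); however, given (\ref{4.5}) these reduce to the same power-counting estimates already employed in Theorems \ref{thm4.3} and \ref{thm8.1}, after which the remaining verifications are routine.
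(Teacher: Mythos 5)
Your proposal is correct and takes essentially the same approach as the paper, whose proof of Theorem \ref{thm8.2} simply mirrors Theorem \ref{thm8.1} for $r<1$: one replaces the contraction $2^n f(x/2^n,z,a)$ by the expansion $\frac{1}{2^n}f(2^n x,z,a)$ and reruns the same power-counting estimates (your $2^{(r-1)n}$, $2^{(r-2)n}$ and $8^{(r-1)n}$ bounds all tending to $0$). If anything, you are more careful than the paper in explicitly justifying the auxiliary limit representations $H(xy,zw,ab)=\lim_{n\to\infty}4^{-n}f\bigl((2^n x)(2^n y),zw,ab\bigr)$ and $H(x_1,x_2,x_3)=\lim_{n\to\infty}8^{-n}f(2^n x_1,2^n x_2,2^n x_3)$ via trilinearity plus the stability bound (\ref{4.5}), a step the paper's proofs of Theorems \ref{thm4.1} and \ref{thm8.1} use without comment.
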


\begin{proof}
The proof is similar to the proof of Theorem \ref{thm8.1}.
\end{proof}

\begin{corollary}
Let $r < 1$ and $\theta$ be nonnegative real numbers, and let $f :
A^3 \rightarrow B$ be a  mapping satisfying {\rm (\ref{4.1})} and  $f(x,0,a)=f(0,z,a) =f(x,z,0)=0$ for all $x,z,a\in A$. 
Then there exists a unique  $\mathbb{C}$-trilinear  mapping  $H : A^3 \rightarrow B$
satisfying  {\rm (\ref{4.5})}, where $D$ is replaced by $H$ in  {\rm (\ref{4.5})}.

If, in addition, the mapping $f: A^3 \to B$ satisfies  {\rm (\ref{5.02})},  {\rm (\ref{8.2})}  and $f(2x, z, a) = 2 f(x,z,a)$ for all $x,z,a\in A$, 
then the mapping $f: A^3 \to B$ is a permuting trihomomorphism.  
\end{corollary}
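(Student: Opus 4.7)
The plan is to deduce this corollary as a direct consequence of Theorem \ref{thm8.2}, together with the observation already used in the preceding corollaries: the extra homogeneity hypothesis $f(2x,z,a)=2f(x,z,a)$ forces the approximating mapping $H$ to coincide with $f$ itself, so that $f$ inherits all the structural properties that Theorem \ref{thm8.2} established for $H$.

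First, I would invoke Theorem \ref{thm8.2} directly. Its hypotheses are satisfied verbatim by $f$: the inequality {\rm (\ref{4.1})} holds and $f$ vanishes on the three coordinate slices. This yields a unique $\mathbb{C}$-trilinear mapping $H:A^3\to B$ satisfying the estimate {\rm (\ref{4.5})} (with $D$ replaced by $H$), and, once {\rm (\ref{5.02})} and {\rm (\ref{8.2})} are also assumed, $H$ is a permuting trihomomorphism. Recall from the proof of Theorem \ref{thm8.2} (which follows the $r<1$ construction of Theorem \ref{thm4.1}) that $H$ is defined by
\[
H(x,z,a)=\lim_{n\to\infty}\frac{1}{2^n}f(2^n x,z,a)
\]
for all $x,z,a\in A$.

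Second, I would use the assumption $f(2x,z,a)=2f(x,z,a)$ to identify $H$ with $f$. A trivial induction on $n$ gives $f(2^n x,z,a)=2^n f(x,z,a)$ for every nonnegative integer $n$ and every $x,z,a\in A$, so
\[
\frac{1}{2^n}f(2^n x,z,a)=f(x,z,a)
\]
for every $n$. Passing to the limit yields $H(x,z,a)=f(x,z,a)$ for all $x,z,a\in A$.

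Finally, since $H$ is a permuting trihomomorphism by Theorem \ref{thm8.2} and $H=f$, the mapping $f:A^3\to B$ is itself a permuting trihomomorphism, completing the proof. There is no real obstacle here: the only nontrivial content is already packed into Theorem \ref{thm8.2}, and the added homogeneity simply collapses the defining limit of $H$ onto $f$. The argument is strictly parallel to the derivations of the corresponding triderivation corollaries earlier in the paper.
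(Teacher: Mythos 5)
Your proposal is correct and follows exactly the route the paper intends: invoke Theorem \ref{thm8.2} for the existence, uniqueness and trihomomorphism property of $H$, then use $f(2x,z,a)=2f(x,z,a)$ (hence $f(2^n x,z,a)=2^n f(x,z,a)$) to collapse the defining limit $H(x,z,a)=\lim_{n\to\infty}2^{-n}f(2^n x,z,a)$ onto $f$, so that $f=H$ is a permuting trihomomorphism. This is precisely the paper's remark that the extra homogeneity gives $H=f$, so no further comment is needed.
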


Similarly, we can obtain the following results.

\begin{theorem}\label{thm10.1}
Let $r > 2$ and $\theta$ be nonnegative real numbers, and let $f :
A^3 \rightarrow B$ be a  mapping satisfying $f(x,0,a)=f(0,z,a)= f(x,z,0)=0$ and {\rm (\ref{5.1})}. 
Then there exists a unique  $\mathbb{C}$-trilinear mapping  $H : A^3 \rightarrow B$
satisfying  {\rm (\ref{5.4})}, where $D$ is replaced by $H$ in  {\rm (\ref{5.4})}.

If, in addition, the mapping $f: A^3 \to B$ satisfies  {\rm (\ref{5.02})} and  {\rm (\ref{8.2})},
then the mapping $H: A^3 \to B$ is a permuting trihomomorphism.  
\end{theorem}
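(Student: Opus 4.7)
The plan is to combine the existence argument of Theorem \ref{thm5.1} (adapted to codomain $B$) with the trihomomorphism step of Theorem \ref{thm8.1}. Since the linear approximation machinery behind (5.1) does not use any algebra structure on the target, replacing $A$ by $B$ throughout Theorem \ref{thm5.1} immediately yields the desired unique $\mathbb{C}$-trilinear $H : A^3 \to B$ satisfying the estimate (5.4) (with $D$ replaced by $H$), defined by
$$H(x,z,a) := \lim_{n\to\infty} 2^n f\!\left(\frac{x}{2^n}, z, a\right).$$
To get $\mathbb{C}$-trilinearity, first obtain tri-additivity by setting $\lambda=\mu=\eta=1$ in (5.1) and invoking Lemma \ref{lm3.1}. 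Then, setting $y=w=b=0$ in (5.1) gives control on $\|4 f(\lambda x/2,\mu z,\eta a)-2\lambda\mu\eta\, f(x,z,a)\|$ by $\theta\|x\|^r\|z\|^r\|a\|^r$; replacing $x$ by $x/2^n$, multiplying by $2^n$, and letting $n\to\infty$ kills the error because $2^n/2^{rn}\to 0$ (already true for $r>1$), and tri-additivity upgrades the resulting identity $H(\lambda x,\mu z,\eta a)=\lambda\mu\eta\,H(x,z,a)$ on $\mathbb{T}^1$ to full $\mathbb{C}$-trilinearity via Lemma \ref{lm2.1}.

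For the second half, assume (5.02) and (8.2) and mimic the computation in Theorem \ref{thm8.1}. The key estimate is
\begin{align*}
\|H(xy,zw,ab)-H(x,z,a)H(y,w,b)\|
&=\lim_{n\to\infty}4^n\left\|f\!\left(\tfrac{xy}{4^n},zw,ab\right)-f\!\left(\tfrac{x}{2^n},z,a\right)f\!\left(\tfrac{y}{2^n},w,b\right)\right\|\\
&\le \lim_{n\to\infty}\frac{4^n\theta}{2^{rn}}(\|x\|^r+\|y\|^r)(\|z\|^r+\|w\|^r)(\|a\|^r+\|b\|^r)=0,
\end{align*}
using $r>2$ so that $4^n/2^{rn}\to 0$. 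This is precisely where the hypothesis $r>2$ (rather than $r>1$) is indispensable. Analogously, (5.02) yields
$$\|H(x_{\sigma(1)},x_{\sigma(2)},x_{\sigma(3)})-H(x_1,x_2,x_3)\|\le \lim_{n\to\infty}\frac{8^n\theta}{8^{rn}}\|x_1\|^r\|x_2\|^r\|x_3\|^r=0$$
for every permutation $\sigma$, since $r>2>1$ gives $8^n/8^{rn}\to 0$. Combining these two identities with the $\mathbb{C}$-trilinearity established in Step 1 shows that $H : A^3\to B$ is a permuting trihomomorphism.

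The main (and essentially only) obstacle is the bookkeeping of scaling factors: one must recognize that the multiplicativity condition (8.2) is quadratic in the first variable, forcing a factor $4^n=(2^n)^2$ in front of the approximation error, and that this factor is tamed precisely by the assumption $r>2$. Everything else is a direct reuse of the arguments already developed for Theorems \ref{thm5.1} and \ref{thm8.1}, so no new estimates or fixed-point machinery are needed, and the uniqueness of $H$ is inherited from the linear part.
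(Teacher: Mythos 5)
Your proposal is correct and follows essentially the same route as the paper, which proves Theorem \ref{thm10.1} only by the remark ``Similarly, we can obtain the following results'': namely, the stability and $\mathbb{C}$-trilinearity argument for (\ref{5.1}) as in Theorem \ref{thm5.1} (with target $B$, via Lemma \ref{lm2.1} and the Section 3 machinery) combined with the $4^n$-scaled multiplicativity estimate from (\ref{8.2}) and the $8^n$-scaled permutation estimate from (\ref{5.02}) exactly as in Theorems \ref{thm8.1} and \ref{thm4.1}. The only cosmetic caveat is that Lemma \ref{lm3.1} is applied to the limit mapping $H$ (which satisfies (\ref{3.1}) exactly), not to $f$ itself, which is what your limiting argument in fact provides.
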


\begin{corollary}
Let $r > 2$ and $\theta$ be nonnegative real numbers, and let $f :
A^3 \rightarrow B$ be a  mapping satisfying $f(x,0,a)=f(0,z,a)= f(x,z,0)=0$ and {\rm (\ref{5.1})}. 
Then there exists a unique  $\mathbb{C}$-trilinear mapping  $H : A^3 \rightarrow B$
satisfying  {\rm (\ref{5.4})}, where $D$ is replaced by $H$ in  {\rm (\ref{5.4})}.

If, in addition, the mapping $f: A^3 \to B$ satisfies  {\rm (\ref{5.02})},  {\rm (\ref{8.2})} and $f(2x, z, a) = 2 f(x,z,a)$ for all $x,z,a\in A$, 
then the mapping $f: A^3 \to B$ is a permuting trihomomorphism.  
\end{corollary}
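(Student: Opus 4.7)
The plan is to reduce this corollary directly to Theorem \ref{thm10.1} by showing that the extra condition $f(2x,z,a)=2f(x,z,a)$ forces the approximating trilinear mapping $H$ to coincide with $f$ itself. Once that is established, every property that Theorem \ref{thm10.1} asserts about $H$ (being $\mathbb{C}$-trilinear, satisfying the stability bound \eqref{5.4}, and being a permuting trihomomorphism under hypotheses \eqref{5.02} and \eqref{8.2}) is automatically inherited by $f$, which is exactly what the corollary claims.

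First, I would invoke Theorem \ref{thm10.1}: since $f$ satisfies $f(x,0,a)=f(0,z,a)=f(x,z,0)=0$ together with \eqref{5.1}, there is a unique $\mathbb{C}$-trilinear mapping $H:A^3\to B$ satisfying \eqref{5.4} (with $D$ replaced by $H$), and it is given explicitly in the proof of Theorem \ref{thm10.1} by the pointwise limit
\begin{equation*}
H(x,z,a) \;=\; \lim_{n\to\infty} 2^n\, f\!\left(\frac{x}{2^n},\, z,\, a\right)
\end{equation*}
for all $x,z,a\in A$.

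Next I would exploit the additional hypothesis $f(2x,z,a)=2f(x,z,a)$. Replacing $x$ by $x/2$ gives $f(x,z,a)=2f(x/2,z,a)$, and a straightforward induction on $n$ yields $f(x/2^n,z,a)=2^{-n}f(x,z,a)$ for every nonnegative integer $n$ and every $x,z,a\in A$. Consequently $2^n f(x/2^n,z,a)=f(x,z,a)$ for all $n$, so the defining limit for $H$ is constant in $n$ and equals $f(x,z,a)$. Therefore $H(x,z,a)=f(x,z,a)$ for all $x,z,a\in A$.

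Finally, I would apply the second half of Theorem \ref{thm10.1}: since $f$ satisfies \eqref{5.02} and \eqref{8.2} by assumption, that theorem guarantees $H$ is a permuting trihomomorphism. Because $H=f$, the mapping $f$ is itself a permuting trihomomorphism, completing the proof. I do not expect any serious obstacle here; the only point to be careful about is the bookkeeping that shows $2^n f(x/2^n,z,a)=f(x,z,a)$ for all $n$ from the single doubling identity, which is immediate by induction.
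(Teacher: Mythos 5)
Your proposal is correct and follows essentially the same route the paper intends: the paper's remark before the first such corollary notes precisely that the doubling hypothesis $f(2x,z,a)=2f(x,z,a)$ forces $H(x,z,a)=\lim_{n\to\infty}2^n f(x/2^n,z,a)=f(x,z,a)$, after which the conclusions of Theorem \ref{thm10.1} transfer to $f$. Your induction showing $2^n f(x/2^n,z,a)=f(x,z,a)$ is exactly the "one can easily show" step the paper leaves to the reader.
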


\begin{theorem}\label{thm10.2}
Let $r < 1$ and $\theta$ be nonnegative real numbers, and let $f :
A^3 \rightarrow B$ be a  mapping satisfying {\rm (\ref{5.1})} and  $f(x,0,a)=f(0,z,a) =f(x,z,0)=0$ for all $x,z, a\in A$.
Then there exists a unique $\mathbb{C}$-trilinear  mapping  $H : A^3 \rightarrow B$
satisfying  {\rm (\ref{5.5})}, where $D$ is replaced by $H$ in  {\rm (\ref{5.5})}.

If, in addition, the mapping $f: A^3 \to B$ satisfies  {\rm (\ref{5.02})} and  {\rm (\ref{8.2})},
then the mapping $H: A^3 \to B$ is a permuting trihomomorphism. 
\end{theorem}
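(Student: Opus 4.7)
The plan is first to extract an approximating $\mathbb{C}$-trilinear map $H$ from (\ref{5.1}), then to verify the two identities defining a permuting trihomomorphism using the supplementary assumptions (\ref{5.02}) and (\ref{8.2}). Running the argument of Theorem \ref{thm5.2} verbatim with codomain $B$ in place of $A$: setting $\lambda=\mu=\eta=1$ in (\ref{5.1}) reduces it to (\ref{3.4}) and yields a unique tri-additive mapping
$$H(x,z,a) := \lim_{n \to \infty} \frac{1}{2^n} f(2^n x, z, a)$$
satisfying the required bound (\ref{5.5}). The upgrade to $\mathbb{C}$-trilinearity then proceeds as in Theorem \ref{thm5.1}: apply (\ref{5.1}) with $y=w=b=0$ and $\lambda,\mu,\eta\in\mathbb{T}^1$ to control $\|f(\lambda x,\mu z,\eta a)-\lambda\mu\eta f(x,z,a)\|$ up to $\theta\|x\|^r\|z\|^r\|a\|^r$, pass to the limit using the decay $2^{rn}/2^n\to 0$, and invoke Lemma \ref{lm2.1}.

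For the multiplicativity identity $H(xy,zw,ab)=H(x,z,a)H(y,w,b)$, I mirror the calculation in Theorem \ref{thm8.1} but adjusted for $r<1$. Exploiting the tri-additivity of $H$ together with the $2^n$-scaling in the first slot, one has
$$H(xy,zw,ab) = \lim_{n\to\infty} \frac{1}{4^n} f\bigl((2^n x)(2^n y),\, zw,\, ab\bigr).$$
Applying (\ref{8.2}) with $(x,y)$ replaced by $(2^n x, 2^n y)$ produces an error of order $\theta\,2^{rn}(\|x\|^r+\|y\|^r)(\|z\|^r+\|w\|^r)(\|a\|^r+\|b\|^r)$, and after dividing by $4^n$ the decisive factor $2^{rn}/4^n = 2^{(r-2)n}$ tends to $0$ since $r<1<2$. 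Simultaneously $\frac{1}{4^n}f(2^n x,z,a)\,f(2^n y,w,b)\to H(x,z,a)H(y,w,b)$, so the identity follows in the limit.

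The permutation invariance $H(x_{\sigma(1)},x_{\sigma(2)},x_{\sigma(3)})=H(x_1,x_2,x_3)$ is obtained by the parallel calculation with (\ref{5.02}): applying the $2^n$-scaling in all three slots gives
$$\|H(x_{\sigma(1)},x_{\sigma(2)},x_{\sigma(3)}) - H(x_1,x_2,x_3)\| = \lim_{n\to\infty} \frac{1}{8^n} \bigl\| f(2^n x_{\sigma(1)}, 2^n x_{\sigma(2)}, 2^n x_{\sigma(3)}) - f(2^n x_1, 2^n x_2, 2^n x_3) \bigr\|,$$
and (\ref{5.02}) bounds the right-hand side by $8^{(r-1)n}\theta\|x_1\|^r\|x_2\|^r\|x_3\|^r \to 0$. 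Given the structure of the estimates, the only technical point in each of the three limit arguments is confirming that the scaling factor $2^{(r-c)n}$ decays to zero, where $c$ is the number of slots pulled into the dilation; this is automatic from $r<1$, so no genuine obstacle arises and the proof reduces to careful bookkeeping.
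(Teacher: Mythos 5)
Your proposal is correct and is essentially the paper's own (only sketched) argument: the paper disposes of this theorem by declaring it ``similar'' to Theorems \ref{thm5.2}, \ref{thm8.1} and \ref{thm8.2}, and your plan fills in exactly those steps for $r<1$ --- construct $H(x,z,a)=\lim_{n\to\infty}2^{-n}f(2^nx,z,a)$ from (\ref{5.1}) with $\lambda=\mu=\eta=1$, upgrade to $\mathbb{C}$-trilinearity via the unit-modulus version of (\ref{5.1}) and Lemma \ref{lm2.1}, and verify multiplicativity and permutation invariance by dilating the relevant slots in (\ref{8.2}) and (\ref{5.02}), with the decisive decay factors $2^{(r-2)n}$ and $8^{(r-1)n}$ correctly identified. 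The only cosmetic imprecision is that putting $y=w=b=0$ in (\ref{5.1}) literally bounds $\|4f(\lambda x/2,\mu z,\eta a)-2\lambda\mu\eta f(x,z,a)\|$ rather than $\|f(\lambda x,\mu z,\eta a)-\lambda\mu\eta f(x,z,a)\|$, but after rescaling $x\mapsto 2^nx$ and passing to the limit this yields the homogeneity $H(\lambda x,\mu z,\eta a)=\lambda\mu\eta H(x,z,a)$ exactly as you indicate, so no gap results.
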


\begin{corollary}
Let $r < 1$ and $\theta$ be nonnegative real numbers, and let $f :
A^3 \rightarrow B$ be a  mapping satisfying {\rm (\ref{5.1})} and  $f(x,0,a)=f(0,z,a) =f(x,z,0)=0$ for all $x,z, a\in A$.
Then there exists a unique $\mathbb{C}$-trilinear  mapping  $H : A^3 \rightarrow B$
satisfying  {\rm (\ref{5.5})}, where $D$ is replaced by $H$ in  {\rm (\ref{5.5})}.

If, in addition, the mapping $f: A^3 \to B$ satisfies  {\rm (\ref{5.02})},  {\rm (\ref{8.2})} and $f(2x, z, a) = 2 f(x,z,a)$ for all $x,z,a\in A$, 
then the mapping $f: A^3 \to B$ is a permuting trihomomorphism. 
\end{corollary}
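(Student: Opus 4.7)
The plan is to deduce this corollary directly from Theorem \ref{thm10.2} together with the standing normalization $f(2x,z,a)=2f(x,z,a)$, mirroring the short remarks the author already used after Theorems \ref{thm4.1} and \ref{thm8.1}. First I would invoke Theorem \ref{thm10.2} to extract a unique $\mathbb{C}$-trilinear mapping $H:A^3\to B$ satisfying the estimate {\rm (\ref{5.5})} (with $D$ replaced by $H$), where $H$ is explicitly given by
\[
H(x,z,a)=\lim_{n\to\infty}\frac{1}{2^n}f(2^n x,z,a)
\]
for all $x,z,a\in A$, exactly as constructed in the proof of the $r<1$ theorem in Section 3.

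Next I would argue that the additional hypothesis $f(2x,z,a)=2f(x,z,a)$ collapses this limit to $f$ itself. A straightforward induction on $n$ yields $f(2^n x,z,a)=2^n f(x,z,a)$, hence $\tfrac{1}{2^n}f(2^n x,z,a)=f(x,z,a)$ for every $n\geq 0$, and passing to the limit gives $H(x,z,a)=f(x,z,a)$ for all $x,z,a\in A$. In particular $f$ is itself $\mathbb{C}$-trilinear.

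Finally, assuming also {\rm (\ref{5.02})} and {\rm (\ref{8.2})}, the second conclusion of Theorem \ref{thm10.2} asserts that $H$ is a permuting trihomomorphism; this means $H$ respects the trihomomorphism multiplicativity relation and is invariant under permutations of its three arguments. Since $H=f$, these two identities transfer verbatim to $f$, so $f$ is a permuting trihomomorphism, as claimed.

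I do not foresee a real obstacle: the two facts that the hypothesis $f(2x,z,a)=2f(x,z,a)$ forces $H=f$ and that all the structural properties then descend from $H$ to $f$ are both immediate. The only point that requires a word of care is making sure the explicit formula for $H$ used here is indeed the one produced by the construction in Section 3 (the case $r<1$), so that the collapse $\tfrac{1}{2^n}f(2^n x,z,a)\to f(x,z,a)$ is legitimate; but this is exactly how $H$ was defined in the proof of Theorem \ref{thm10.2}, so the argument is self-contained.
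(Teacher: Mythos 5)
Your proposal is correct and follows essentially the same route the paper intends: the paper's remark preceding these corollaries is precisely that the extra hypothesis $f(2x,z,a)=2f(x,z,a)$ forces the mapping constructed in the theorem to coincide with $f$, after which the trihomomorphism and permuting properties guaranteed by Theorem \ref{thm10.2} under {\rm (\ref{5.02})} and {\rm (\ref{8.2})} transfer to $f$ itself. Your identification of $H$ with the limit $\lim_{n\to\infty}2^{-n}f(2^{n}x,z,a)$ matches the $r<1$ construction used in Section 3 and in the proofs referenced for Theorem \ref{thm10.2}, so the collapse $H=f$ is legitimate.
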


From now on, assume that  $A$  is  a  unital $C^*$-algebra with unit $e$ and unitary group $U(A)$. 

\begin{theorem}\label{thm10.3}
Let $r > 2$ and $\theta$ be nonnegative real numbers, and let $f :
A^3 \rightarrow B$ be a mapping satisfying {\rm (\ref{4.1})} and  $f(x,0,a)=f(0,z,a) = f(x,z,0)=0$ for all $x,z, a\in A$.
Then there exists a unique $\mathbb{C}$-trilinear mapping  $H : A^3 \rightarrow B$ 
satisfying {\rm (\ref{4.4})},  where $D$ is replaced by $H$ in  {\rm (\ref{4.4})}.

If, in addition, the mapping $f: A^3\to B$ satisfies  {\rm (\ref{5.02})}   and 
\begin{eqnarray}\label{10.2}
\| f(uy, zw, ab) - f(u, z,a) f(y,w,b) \|   \le  \theta (1+ \|y\|^r) ( \|z\|^r +  \|w\|^r )  ( \|a\|^r +  \|b\|^r )
\end{eqnarray}
 for  all $u \in U(A)$ and all $y,z,a\in A$,
then the mapping $H: A^3 \to B$  is a permuting trihomomorphism.
\end{theorem}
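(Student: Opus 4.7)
The plan is to follow the pattern of Theorem \ref{thm4.3}, borrowing its unitary-decomposition trick but adapting it to the multiplicative (trihomomorphism) identity. The existence and $\mathbb{C}$-trilinearity of $H$, together with the (\ref{4.4})-type estimate, follow exactly as in Theorem \ref{thm4.1}: setting $\lambda=\mu=\eta=1$ in (\ref{4.1}) produces the tri-additive limit $H(x,z,a):=\lim_{n\to\infty} 2^n f(x/2^n,z,a)$, and varying $\lambda,\mu,\eta\in\mathbb{T}^1$ in (\ref{4.1}) combined with Lemma \ref{lm2.1} upgrades $H$ to a $\mathbb{C}$-trilinear mapping. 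Uniqueness is as before. The permutation identity $H(x_{\sigma(1)},x_{\sigma(2)},x_{\sigma(3)})=H(x_1,x_2,x_3)$ drops out of (\ref{5.02}) by the $8^n$-scaling argument used in Theorem \ref{thm4.1}; since $r>2>1$, the residual bound is of order $8^{n(1-r)}\to 0$.

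The genuinely new step, and the main obstacle, is proving $H(uy,zw,ab)=H(u,z,a)H(y,w,b)$ for $u\in U(A)$ and $y,z,w,a,b\in A$. The naive scaling $y\mapsto y/2^n$ fails here because the factor $1+\|y\|^r$ on the right of (\ref{10.2}) does not shrink with $n$, and multiplying by the power of $2$ needed to form the right limits would produce an unbounded remainder. My workaround is to leave $u$ and $y$ fixed and scale all four of $z,w,a,b$ by $1/2^n$ instead. The right-hand side of (\ref{10.2}) then becomes $\theta(1+\|y\|^r)(\|z\|^r+\|w\|^r)(\|a\|^r+\|b\|^r)/4^{rn}$, and multiplying the inequality through by $16^n$ yields a bound of order $16^{n(1-r)}$, which tends to $0$ because $r>2$. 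On the left-hand side, the $\mathbb{C}$-trilinearity of $H$ together with the (\ref{4.4})-type approximation of $f$ by $H$ gives $16^n f(uy,zw/4^n,ab/4^n)\to H(uy,zw,ab)$, and $16^n f(u,z/2^n,a/2^n) f(y,w/2^n,b/2^n) = [4^n f(u,z/2^n,a/2^n)][4^n f(y,w/2^n,b/2^n)]\to H(u,z,a)H(y,w,b)$, yielding the identity on $U(A)\times A^5$ in the limit.

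To finish, I would extend from $u\in U(A)$ to arbitrary $x\in A$ using the standard fact \cite{kr83} that every element of a unital $C^*$-algebra is a finite linear combination $x=\sum_{j=1}^m\lambda_j u_j$ with $\lambda_j\in\mathbb{C}$ and $u_j\in U(A)$, exactly as in Theorem \ref{thm4.3}. By $\mathbb{C}$-linearity of $H$ in the first slot, $H(xy,zw,ab)=\sum_j\lambda_j H(u_j y,zw,ab)=\sum_j\lambda_j H(u_j,z,a)H(y,w,b)=H(x,z,a)H(y,w,b)$. Combined with the permutation invariance established above, this shows that $H: A^3 \to B$ is a permuting trihomomorphism.
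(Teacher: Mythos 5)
Your proposal is correct, and it follows the paper's overall architecture: obtain the unique $\mathbb{C}$-trilinear $H$ as in Theorem \ref{thm4.1}, get the permutation invariance from (\ref{5.02}) by the $8^n$-scaling, establish $H(uy,zw,ab)=H(u,z,a)H(y,w,b)$ for unitary $u$, and finish by writing an arbitrary $x\in A$ as a finite linear combination of unitaries, exactly as in the paper. Where you genuinely diverge is the unitary step: the paper dismisses it with ``by the same reasoning as in the proof of Theorem \ref{thm8.1}'', but that reasoning rescales the first-slot arguments, which cannot be applied to (\ref{10.2}) as stated --- $u/2^n$ is no longer unitary, the factor $1+\|y\|^r$ does not decay, so the $2^n$-multiplied remainder blows up, and the product term would in any case converge to $f(u,z,a)H(y,w,b)$ rather than $H(u,z,a)H(y,w,b)$. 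Your fix --- keep $u$ and $y$ fixed, scale $z,w,a,b$ by $2^{-n}$, and use the alternative representation $H(x,z,a)=\lim_{n\to\infty}4^n f(x,z/2^n,a/2^n)$, which follows from (\ref{4.4}) combined with the trilinearity of $H$ --- is a legitimate and in fact necessary repair of the step the paper leaves implicit, and it yields a genuine quantitative argument where the paper offers only a citation. One small slip: after multiplying the rescaled inequality (\ref{10.2}) by $16^n$, the remainder is of order $16^n/4^{rn}=4^{n(2-r)}$ (that is, $16^{n(1-r/2)}$), not $16^{n(1-r)}$; this is harmless, since the condition you invoke, $r>2$, is precisely what makes $4^{n(2-r)}\to 0$, and the rest of your limits (including $16^n f(u,z/2^n,a/2^n)f(y,w/2^n,b/2^n)\to H(u,z,a)H(y,w,b)$ and the final unitary decomposition) are sound.
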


\begin{proof}
By the same reasoning as in the proof of Theorem \ref{thm4.1},
there is a unique $\mathbb{C}$-trilinear  mapping $H: A^3 \to B$ satisfying (\ref{4.4}) defined by 
$$H(x, z,a) : = \lim_{n\to \infty} 2^n f\left(\frac{x}{2^n}, z,a\right) $$
for all $x, z,a \in A$.

By the same reasoning as in the proof of Theorem \ref{thm8.1}, $ H(uy, zw,ab) = H(u,z,a)H(y,w,b)$ for all $u\in U(A)$ and all $y,z,w, a, b\in A$. 
 Since $H$ is  $\mathbb
{C}$-linear in the first variable and each  $x \in A$ is a  finite linear combination of
unitary elements (see \cite{kr83}),  i.e., $x = \sum_{j=1}^m \lambda_j u_j \
(\lambda_j \in {\mathbb{C}}, $ $u_j \in U(A))$, 
 \begin{eqnarray*}
H(xy, zw, ab) & = & H (\sum_{j=1}^m \lambda_j  u_j y,  zw,ab ) = 
\sum_{j=1}^m  \lambda_j H(u_j y, zw,ab) = \sum_{j=1}^m 
\lambda_j  (H(u_j,z,a) H(y, w,b) )\\ &= & (\sum_{j=1}^m  \lambda_j) H(u_j ,z,a)H(y,w,b) 
= H(x,z,a) H(y,w,b)
\end{eqnarray*} for all $x, y,z,w,a,b \in A$. 
 Thus the mapping $H : A^3 \to B$ is a  permuting trihomomorphism.
\end{proof}

\begin{remark}
By a similar method in the proof of Theorem \ref{thm10.3}, one can obtain that if {\rm (\ref{10.2})} in Theorem \ref{thm10.3} is replaced by 
\begin{eqnarray*}
\| f(u_1 u_2, u_3 u_4, u_5 u_6) - f(u_1,u_3, u_5) f(u_2, u_4, u_6) \|   \le  8 \theta 
\end{eqnarray*}
 for  all $u_1, u_2, u_3, u_4, u_5, u_6 \in U(A)$, then  the mapping $H : A^3 \to B$ is a permuting  trihomomorphism.
\end{remark}

\begin{corollary}
Let $r > 2$ and $\theta$ be nonnegative real numbers, and let $f :
A^3 \rightarrow B$ be a mapping satisfying {\rm (\ref{4.1})} and  $f(x,0,a)=f(0,z,a) = f(x,z,0)=0$ for all $x,z, a\in A$.
Then there exists a unique $\mathbb{C}$-trilinear mapping  $H : A^3 \rightarrow B$ 
satisfying {\rm (\ref{4.4})},  where $D$ is replaced by $H$ in  {\rm (\ref{4.4})}.

If, in addition, the mapping $f: A^3\to B$ satisfies  {\rm (\ref{5.02})}, {\rm (\ref{10.2})}   and $f(2x, z, a) = 2 f(x,z,a)$ for all $x,z,a\in A$, 
then the mapping $f: A^3 \to B$  is a permuting trihomomorphism.
\end{corollary}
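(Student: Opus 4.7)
The plan is to reduce this corollary to Theorem \ref{thm10.3} by showing that the extra homogeneity hypothesis $f(2x,z,a)=2f(x,z,a)$ forces the approximating trilinear map $H$ produced by that theorem to coincide with $f$ itself. First I would invoke Theorem \ref{thm10.3} directly: under the assumptions on (\ref{4.1}) and the vanishing of $f$ when any of the first three arguments is zero, there exists a unique $\mathbb{C}$-trilinear map $H:A^3\to B$ satisfying (\ref{4.4}) (with $D$ replaced by $H$), and it is defined explicitly by
\[
H(x,z,a)=\lim_{n\to\infty} 2^{n} f\!\left(\tfrac{x}{2^{n}},z,a\right)
\]
for all $x,z,a\in A$.

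Next I would exploit the assumption $f(2x,z,a)=2f(x,z,a)$. Substituting $x\mapsto x/2$ yields $f(x/2,z,a)=\tfrac12 f(x,z,a)$, and iterating $n$ times gives $f(x/2^{n},z,a)=2^{-n}f(x,z,a)$ for every $n\geq 1$. Therefore $2^{n} f(x/2^{n},z,a)=f(x,z,a)$ for all $n$, and passing to the limit in the defining formula of $H$ yields $H(x,z,a)=f(x,z,a)$ for all $x,z,a\in A$. Hence $f$ itself is $\mathbb{C}$-trilinear and satisfies (\ref{4.4}).

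Finally, since $f=H$, the remaining hypotheses of Theorem \ref{thm10.3}, namely (\ref{5.02}) and (\ref{10.2}), force $H=f$ to satisfy the permutation invariance $f(x_{\sigma(1)},x_{\sigma(2)},x_{\sigma(3)})=f(x_1,x_2,x_3)$ for every permutation $\sigma$ of $(1,2,3)$, and the multiplicativity property $f(xy,zw,ab)=f(x,z,a)f(y,w,b)$ for all $x,y,z,w,a,b\in A$ (the latter obtained from the unitary version (\ref{10.2}) by linearly extending via the fact that every element of the unital $C^{*}$-algebra $A$ is a finite complex linear combination of unitaries, exactly as in the proof of Theorem \ref{thm10.3}). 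Combining these with $\mathbb{C}$-trilinearity shows that $f:A^3\to B$ is a permuting trihomomorphism.

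There is no real obstacle here: the whole proof rests on the elementary observation that the homogeneity hypothesis $f(2x,z,a)=2f(x,z,a)$ stabilizes the approximating sequence $\{2^{n}f(x/2^{n},z,a)\}_{n}$, so the unique trilinear extension $H$ provided by Theorem \ref{thm10.3} collapses to $f$ itself, allowing the conclusions of that theorem to be transferred verbatim from $H$ to $f$.
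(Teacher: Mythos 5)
Your proposal is correct and follows exactly the paper's intended argument: the paper itself notes (before the corollary to Theorem \ref{thm8.1}) that the hypothesis $f(2x,z,a)=2f(x,z,a)$ makes the approximating sequence $2^{n}f(x/2^{n},z,a)$ constant, so the trilinear mapping $H$ of Theorem \ref{thm10.3} equals $f$, and the conclusions transfer to $f$. No gaps; this is the same reduction the paper uses.
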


\begin{theorem}\label{thm10.4}
Let $r < 1$ and $\theta$ be nonnegative real numbers, and let $f :
A^3 \rightarrow B$ be a  mapping satisfying {\rm (\ref{4.1})}  and  $f(x,0,a)=f(0,z,a) =f(x,z,0)=0$ for all $x,z,a\in A$. 
Then there exists a unique $\mathbb{C}$-trilinear mapping  $H : A^3 \rightarrow B$
satisfying {\rm (\ref{4.5})},  where $D$ is replaced by $H$ in  {\rm (\ref{4.5})}.

If, in addition, the mapping $f: A^3 \to B$ satisfies   {\rm (\ref{5.02})} and {\rm (\ref{10.2})},
then the mapping $H: A^3 \to B$  is a permuting  trihomomorphism.
\end{theorem}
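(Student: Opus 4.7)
The plan is to imitate the scheme of Theorem \ref{thm10.3}, but now in the $r < 1$ regime, substituting Theorem \ref{thm8.2} wherever Theorem \ref{thm8.1} was used. Specializing (\ref{4.1}) to $\lambda = \mu = \eta = 1$ meets the hypotheses of Theorem \ref{thm8.2}, which already furnishes a unique tri-additive map $H : A^3 \to B$ satisfying the inequality (\ref{4.5}) (with $D$ replaced by $H$) and explicitly defined by
$$H(x, z, a) := \lim_{n \to \infty} \frac{1}{2^n} f(2^n x, z, a) \qquad (x, z, a \in A).$$
Uniqueness of $H$ is inherited from Theorem \ref{thm8.2}.

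To promote tri-additivity to $\mathbb{C}$-trilinearity, I would replay the $\mathbb{T}^1$-argument from Theorem \ref{thm4.1}: setting $y = x$ and $w = b = 0$ in (\ref{4.1}) produces the control estimate $\|f(2\lambda x, \mu z, \eta a) - 2\lambda\mu\eta f(x, z, a)\| \le 2\theta\|x\|^r\|z\|^r\|a\|^r$ for every $\lambda, \mu, \eta \in \mathbb{T}^1$; evaluating this at $(2^n x, z, a)$, dividing by $2^{n+1}$, and letting $n \to \infty$ forces $H(\lambda x, \mu z, \eta a) = \lambda \mu \eta H(x, z, a)$ because $r < 1$ makes $2^{n(r-1)} \to 0$. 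Lemma \ref{lm2.1} then upgrades $H$ to a $\mathbb{C}$-trilinear mapping.

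For the permuting-trihomomorphism properties, I would run the two limit computations of Theorem \ref{thm10.3} with the scalings reversed. From (\ref{5.02}), replacing $(x_1, x_2, x_3)$ by $(2^n x_1, 2^n x_2, 2^n x_3)$ and dividing by $8^n$ yields
$$\|H(x_{\sigma(1)}, x_{\sigma(2)}, x_{\sigma(3)}) - H(x_1, x_2, x_3)\| \le \lim_{n \to \infty} 8^{n(r-1)} \theta \|x_1\|^r \|x_2\|^r \|x_3\|^r = 0,$$
so $H$ is permutation-invariant. Analogously, applying (\ref{10.2}) with $u \in U(A)$ fixed and the second variable $y$ replaced by $2^n y$ (keeping $u$ a unitary throughout), then dividing by the appropriate power of $2$, produces $H(uy, zw, ab) = H(u, z, a)H(y, w, b)$ for every $u \in U(A)$ and every $y, z, w, a, b \in A$, in the same manner as Theorem \ref{thm10.3}.

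Finally, the extension from $U(A) \times A^5$ to $A^6$ goes through the Kadison--Ringrose decomposition $x = \sum_{j=1}^{m} \lambda_j u_j$ with $\lambda_j \in \mathbb{C}$ and $u_j \in U(A)$ \cite{kr83}; using the $\mathbb{C}$-linearity of $H$ in its first slot established in the previous step, one expands $H(xy, zw, ab) = \sum_j \lambda_j H(u_j y, zw, ab) = \sum_j \lambda_j H(u_j, z, a) H(y, w, b) = H(x, z, a) H(y, w, b)$, exactly as in Theorem \ref{thm10.3}. The step I expect to be the main obstacle is the unitary-preserving scaling in the multiplicativity argument: since a scaled unitary is no longer a unitary, the limit witnessing $H(uy, zw, ab) - H(u, z, a)H(y, w, b) = 0$ must be arranged so that $u$ is never dilated and all divergent factors arising from the $r$-growth of (\ref{10.2}) are killed by $2^{n(r-1)} \to 0$; this is the bookkeeping already performed in Theorem \ref{thm10.3}, and I would follow that template verbatim.
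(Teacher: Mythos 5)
Most of your outline is sound and matches what the paper intends: existence and uniqueness of $H$ come from the $r<1$ stability result with $H(x,z,a)=\lim_{n\to\infty}2^{-n}f(2^{n}x,z,a)$, the $\mathbb{T}^1$-homogeneity estimate obtained from $y=x$, $w=b=0$ in (\ref{4.1}) passes to the limit because $2^{n(r-1)}\to 0$, Lemma \ref{lm2.1} gives $\mathbb{C}$-trilinearity, and the permutation invariance follows from (\ref{5.02}) after dilating all three variables by $2^n$ and dividing by $8^n$ (implicitly using, as the paper also does, that $8^{-n}f(2^nx_1,2^nx_2,2^nx_3)\to H(x_1,x_2,x_3)$, which holds by trilinearity of $H$ together with (\ref{4.5}) since $r<1$).

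The genuine gap is in the multiplicativity step. Your proposed scaling --- keep $u\in U(A)$, $z,w,a,b$ fixed, replace $y$ by $2^n y$ in (\ref{10.2}), and divide by $2^n$ --- does make the error term vanish, but the second term of (\ref{10.2}) then converges to $f(u,z,a)H(y,w,b)$, not to $H(u,z,a)H(y,w,b)$: the arguments $u,z,a$ of the first factor are never rescaled, so no limiting process ever replaces $f(u,z,a)$ by $H(u,z,a)$, and you only obtain $H(uy,zw,ab)=f(u,z,a)H(y,w,b)$, which is not the identity needed for the trihomomorphism property. The correct reading of ``the same manner as Theorem \ref{thm10.3}'' is to dilate the \emph{second and third} slots instead: since $H$ is trilinear and satisfies (\ref{4.5}), one has $H(x,z,a)=\lim_{n\to\infty}4^{-n}f(x,2^{n}z,2^{n}a)$ for $r<1$; substituting $z\mapsto 2^nz$, $w\mapsto 2^nw$, $a\mapsto 2^na$, $b\mapsto 2^nb$ in (\ref{10.2}) and dividing by $16^{n}$ gives an error of order $16^{-n}\cdot 4^{rn}=4^{(r-2)n}\to 0$, while $16^{-n}f(uy,4^{n}zw,4^{n}ab)\to H(uy,zw,ab)$ and $4^{-n}f(u,2^nz,2^na)\cdot 4^{-n}f(y,2^nw,2^nb)\to H(u,z,a)H(y,w,b)$, yielding $H(uy,zw,ab)=H(u,z,a)H(y,w,b)$ for $u\in U(A)$; the Kadison--Ringrose decomposition then finishes as you state. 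Note also that in Theorem \ref{thm10.3} itself ($r>2$) the unitary cannot be dilated either, so its proof already must scale the second and third variables (there $2^{n(r-1)}\to\infty$, and the decay comes from $4^{n(2-r)}\to 0$), contrary to your closing remark about the bookkeeping.
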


\begin{proof}
The proof is similar to the proof of Theorem \ref{thm10.3}.
\end{proof}

\begin{corollary}
Let $r < 1$ and $\theta$ be nonnegative real numbers, and let $f :
A^3 \rightarrow B$ be a  mapping satisfying {\rm (\ref{4.1})}  and  $f(x,0,a)=f(0,z,a) =f(x,z,0)=0$ for all $x,z,a\in A$. 
Then there exists a unique $\mathbb{C}$-trilinear mapping  $H : A^3 \rightarrow B$
satisfying {\rm (\ref{4.5})},  where $D$ is replaced by $H$ in  {\rm (\ref{4.5})}.

If, in addition, the mapping $f: A^3 \to B$ satisfies   {\rm (\ref{5.02})},  {\rm (\ref{10.2})}  and $f(2x, z, a) = 2 f(x,z,a)$ for all $x,z,a\in A$, 
then the mapping $f: A^3 \to B$  is a permuting  trihomomorphism.
\end{corollary}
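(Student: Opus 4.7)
The plan is to reduce this corollary directly to Theorem \ref{thm10.4}, with the only additional work being to verify that the extra hypothesis $f(2x, z, a) = 2 f(x, z, a)$ forces the approximating trilinear mapping $H$ to coincide with $f$ itself. So the proof has two essentially disjoint pieces: one hands us $H$ (and, when (\ref{5.02}) and (\ref{10.2}) are added, tells us $H$ is a permuting trihomomorphism), and the other pins $H$ to be equal to $f$.

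First I would invoke Theorem \ref{thm10.4} applied to $f$. Under the hypotheses $r<1$, the bound (\ref{4.1}) and the vanishing conditions $f(x,0,a) = f(0,z,a) = f(x,z,0) = 0$, this theorem produces a unique $\mathbb{C}$-trilinear mapping $H: A^3 \to B$ satisfying (\ref{4.5}) (with $D$ replaced by $H$). Moreover, when we additionally assume (\ref{5.02}) and (\ref{10.2}), the conclusion of Theorem \ref{thm10.4} is precisely that $H$ is a permuting trihomomorphism. Hence all that is missing is the identification $H = f$.

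For this identification, recall that in the $r<1$ regime the mapping $H$ is constructed as
\[
H(x, z, a) = \lim_{n\to\infty} \frac{1}{2^n} f(2^n x, z, a)
\]
for all $x, z, a \in A$, exactly as in the $r<1$ Hyers–Ulam stability theorem of Section~2 from which Theorem \ref{thm10.4} inherits its construction. Under the extra assumption $f(2x, z, a) = 2 f(x, z, a)$, a straightforward induction on $n$ gives $f(2^n x, z, a) = 2^n f(x, z, a)$ for every $n \ge 0$, so every term of the defining sequence equals $f(x, z, a)$. Passing to the limit yields $H(x, z, a) = f(x, z, a)$ for all $x, z, a \in A$.

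Combining the two pieces, $f = H$ is a permuting trihomomorphism. The only potentially delicate point is making sure the construction of $H$ in Theorem \ref{thm10.4} is indeed the $\frac{1}{2^n} f(2^n x, z, a)$ version rather than the $2^n f(x/2^n, z, a)$ version used in the $r>2$ setting; this follows the same pattern as the $r<1$ theorems in Sections~2 and~3, and once it is confirmed the rest of the argument is immediate. No serious obstacle is expected beyond this bookkeeping.
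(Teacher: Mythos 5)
Your proposal is correct and follows essentially the same route the paper intends: invoke Theorem \ref{thm10.4} for the existence and trihomomorphism property of $H$, then use $f(2x,z,a)=2f(x,z,a)$ to see that every term of the defining sequence equals $f(x,z,a)$, so $H=f$. The "delicate point" you flag is actually harmless: whichever construction is used, $\frac{1}{2^n}f(2^nx,z,a)$ or $2^nf(x/2^n,z,a)$, the $2$-homogeneity in the first variable collapses each term to $f(x,z,a)$, so the identification $H=f$ goes through either way.
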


Similarly, we can obtain the following results.

\begin{theorem}\label{thm10.8}
Let $r > 2$ and $\theta$ be nonnegative real numbers, and let $f :
A^3 \rightarrow B$ be a  mapping satisfying {\rm (\ref{5.1})}  and  $f(x,0,a)=f(0,z,a) =f(x,z,0)=0$ for all $x,z,a\in A$. 
Then there exists a unique $\mathbb{C}$-trilinear mapping  $H : A^3 \rightarrow B$
satisfying {\rm (\ref{5.4})},  where $D$ is replaced by $H$ in  {\rm (\ref{5.4})}.

If, in addition, the mapping $f: A^3 \to B$ satisfies  {\rm (\ref{5.02})} and  {\rm (\ref{10.2})},
then the mapping $H: A^3 \to B$  is a permuting trihomomorphism.
\end{theorem}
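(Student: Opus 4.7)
The plan is to adapt the proof of Theorem \ref{thm10.3} \emph{mutatis mutandis}, now starting from the $s$-functional inequality (\ref{5.1}) in place of (\ref{4.1}). First, specializing (\ref{5.1}) to $\lambda=\mu=\eta=1$ reduces it to the hypothesis (\ref{3.4}) of Theorem \ref{thm5.1} with the target space $B$ in place of $Y$; this produces a unique tri-additive mapping
\[
H(x,z,a) := \lim_{n\to\infty} 2^n f\!\left(\frac{x}{2^n}, z, a\right)
\]
satisfying the approximation bound (\ref{5.4}) (with $D$ replaced by $H$). This handles the existence and uniqueness parts.

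Next I would upgrade $H$ from tri-additive to $\mathbb{C}$-trilinear via the device of Theorem \ref{thm4.1}. Setting $y=w=b=0$ in (\ref{5.1}) and using $f(x,0,a)=f(0,z,a)=f(x,z,0)=0$ collapses the left-hand side to $\|4 f(\lambda x/2,\mu z,\eta a) - 2\lambda\mu\eta f(x,z,a)\|$ and makes the $s$-dependent term on the right vanish, leaving only the control $\theta\|x\|^r\|z\|^r\|a\|^r$. Replacing $x$ by $x/2^n$, multiplying through by $2^n$, and sending $n\to\infty$ (the error $2^n/2^{rn}$ tends to $0$ since $r>2$) yields $H(\lambda x,\mu z,\eta a)=\lambda\mu\eta H(x,z,a)$ for all $\lambda,\mu,\eta\in\mathbb{T}^1$. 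Lemma \ref{lm2.1} then promotes this to full $\mathbb{C}$-trilinearity.

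Finally, the permuting trihomomorphism property follows from (\ref{8.2}) and (\ref{5.02}) exactly as in Theorems \ref{thm8.1} and \ref{thm10.3}. Replacing $x,y$ by $x/2^n,y/2^n$ in (\ref{8.2}) and multiplying by $4^n$ gives an error $4^n\theta/2^{2rn}\to 0$ (since $r>2$); meanwhile $4^n f(xy/4^n,zw,ab)\to H(xy,zw,ab)$ and $4^n f(x/2^n,z,a)f(y/2^n,w,b) = \bigl(2^n f(x/2^n,z,a)\bigr)\bigl(2^n f(y/2^n,w,b)\bigr) \to H(x,z,a)H(y,w,b)$, delivering the multiplicativity. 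Similarly, rescaling each $x_i$ by $2^n$ in (\ref{5.02}) and multiplying by $8^n$ gives an error $8^n\theta/8^{rn}\to 0$, and the permutation identity passes to $H$ via the symmetric representation $H(x_1,x_2,x_3)=\lim_{n\to\infty} 8^n f(x_1/2^n,x_2/2^n,x_3/2^n)$. The only mild bookkeeping point, which I regard as the main (minor) obstacle, is justifying this symmetric representation: it follows by combining (\ref{5.4}) with the already-established $\mathbb{C}$-trilinearity, since $\|8^n f(x/2^n,y/2^n,z/2^n)-H(x,y,z)\| \le C\cdot 8^{(1-r)n}\|x\|^r\|y\|^r\|z\|^r\to 0$ when $r>1$, where $C=\frac{2^r\theta}{2(2^r-2)}$ comes from (\ref{5.4}).
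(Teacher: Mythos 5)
Your existence, uniqueness and $\mathbb{C}$-trilinearity steps do follow the paper's intended route (specialize (\ref{5.1}) at $\lambda=\mu=\eta=1$ to reduce to the stability theorem for (\ref{3.4}) with $r>1$, then use the $y=w=b=0$ substitution and Lemma \ref{lm2.1}), and your treatment of (\ref{5.02}) via the rescaled representation $H(x_1,x_2,x_3)=\lim_{n\to\infty}8^n f\left(\frac{x_1}{2^n},\frac{x_2}{2^n},\frac{x_3}{2^n}\right)$, justified by (\ref{5.4}) together with trilinearity, is sound. The genuine gap is in the multiplicativity step: Theorem \ref{thm10.8} assumes the unitary-restricted condition (\ref{10.2}), not the global condition (\ref{8.2}), yet your argument substitutes $x\mapsto x/2^n$, $y\mapsto y/2^n$ into (\ref{8.2}), a hypothesis you are not given. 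Under (\ref{10.2}) the first argument must be a unitary $u\in U(A)$, and $u/2^n$ is no longer unitary, so that rescaling is simply unavailable; as written you have proved Theorem \ref{thm10.1} rather than the stated theorem. (A minor symptom of the same slip: even under (\ref{8.2}), the error after your substitution is $\frac{4^n\theta}{2^{rn}}(\|x\|^r+\|y\|^r)(\cdots)(\cdots)$, not $\frac{4^n\theta}{2^{2rn}}$, since $z,w,a,b$ are not rescaled; it still vanishes because $r>2$.)

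The missing ingredient is the $C^*$-algebraic argument of Theorem \ref{thm10.3}. First establish $H(uy,zw,ab)=H(u,z,a)H(y,w,b)$ for $u\in U(A)$ and all $y,z,w,a,b\in A$ from (\ref{10.2}) by rescaling the variables other than the unitary slot: replacing $z,w,a,b$ by $\frac{z}{2^n},\frac{w}{2^n},\frac{a}{2^n},\frac{b}{2^n}$ and multiplying by $16^n$, the right-hand side contributes $16^n\theta(1+\|y\|^r)(\|z\|^r+\|w\|^r)(\|a\|^r+\|b\|^r)/2^{2rn}=(16/4^r)^n\theta(\cdots)\to 0$ since $r>2$, while both sides converge to the desired quantities by the same device you already used for (\ref{5.02}), namely that (\ref{5.4}) plus trilinearity lets you realize $H$ as a limit of $f$ rescaled in the second and third variables. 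Then extend to arbitrary first argument by writing each $x\in A$ as a finite linear combination of unitaries, $x=\sum_{j=1}^m\lambda_j u_j$ (Kadison--Ringrose), and using the $\mathbb{C}$-linearity of $H$ in the first variable, exactly as in the proof of Theorem \ref{thm10.3}. With that replacement of your final step, the proof matches the paper's (sketched) argument.
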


\begin{corollary}
Let $r > 2$ and $\theta$ be nonnegative real numbers, and let $f :
A^3 \rightarrow B$ be a  mapping satisfying {\rm (\ref{5.1})}  and  $f(x,0,a)=f(0,z,a) =f(x,z,0)=0$ for all $x,z,a\in A$. 
Then there exists a unique $\mathbb{C}$-trilinear mapping  $H : A^3 \rightarrow B$
satisfying {\rm (\ref{5.4})},  where $D$ is replaced by $H$ in  {\rm (\ref{5.4})}.

If, in addition, the mapping $f: A^3 \to B$ satisfies  {\rm (\ref{5.02})},   {\rm (\ref{10.2})}  and $f(2x, z, a) = 2 f(x,z,a)$ for all $x,z,a\in A$, 
then the mapping $f: A^3 \to B$  is a permuting trihomomorphism.
\end{corollary}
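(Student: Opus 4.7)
The plan is to reduce this statement to Theorem \ref{thm10.8} by showing that the auxiliary approximant $H$ produced there actually coincides with $f$ under the extra hypothesis $f(2x,z,a)=2f(x,z,a)$.

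First I would invoke Theorem \ref{thm10.8} directly: since $f$ satisfies (\ref{5.1}) and the three vanishing conditions, it provides a unique $\mathbb{C}$-trilinear mapping $H:A^3\to B$ satisfying (\ref{5.5}) (with $D$ replaced by $H$) and defined explicitly by
\[
H(x,z,a)=\lim_{n\to\infty}2^{n}f\!\left(\frac{x}{2^{n}},z,a\right)
\]
for all $x,z,a\in A$. Moreover, under the additional assumptions (\ref{5.02}) and (\ref{10.2}), Theorem \ref{thm10.8} asserts that this $H$ is a permuting trihomomorphism.

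Next I would exploit the homogeneity hypothesis $f(2x,z,a)=2f(x,z,a)$. Replacing $x$ by $x/2$, this rewrites as $f(x,z,a)=2f(x/2,z,a)$ for all $x,z,a\in A$, and a straightforward induction on $n$ then gives
\[
f(x,z,a)=2^{n}f\!\left(\frac{x}{2^{n}},z,a\right)
\]
for every nonnegative integer $n$. Consequently every term of the defining sequence for $H(x,z,a)$ equals $f(x,z,a)$, and passing to the limit yields $H(x,z,a)=f(x,z,a)$ for all $x,z,a\in A$.

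Finally, since $f=H$ and $H$ is a permuting trihomomorphism by Theorem \ref{thm10.8}, the mapping $f:A^3\to B$ is itself a permuting trihomomorphism, which is the desired conclusion. There is no real obstacle here; the only delicate point is making sure the identity $f(x,z,a)=2^{n}f(x/2^{n},z,a)$ is established cleanly so that the limit collapses to $f$, after which the trihomomorphism structure transfers automatically from $H$ to $f$.
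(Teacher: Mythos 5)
Your argument is correct and is essentially the paper's intended route: invoke Theorem \ref{thm10.8} for the unique $\mathbb{C}$-trilinear $H(x,z,a)=\lim_{n\to\infty}2^{n}f(x/2^{n},z,a)$, note that $f(2x,z,a)=2f(x,z,a)$ gives $f(x,z,a)=2^{n}f(x/2^{n},z,a)$ for all $n$, hence $H=f$, so the permuting trihomomorphism property transfers to $f$ — exactly the observation the paper makes in the remark preceding the corollary to Theorem \ref{thm8.1}. The only slip is a label: in the first step $H$ satisfies the $r>2$ estimate (\ref{5.4}), not (\ref{5.5}); with that correction the proof stands as written.
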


\begin{theorem}\label{thm10.9}
Let $r < 1$ and $\theta$ be nonnegative real numbers, and let $f :
A^3 \rightarrow B$ be a mapping satisfying {\rm (\ref{5.1})}   and  $f(x,0,a)=f(0,z,a) = f(x,z,0) =0$ for all $x,z,a\in A$. 
Then there exists a unique $\mathbb{C}$-trilinear  mapping  $H : A^3 \rightarrow B$ 
satisfying {\rm (\ref{5.5})},  where $D$ is replaced by $H$ in  {\rm (\ref{5.5})}.

If, in addition, the mapping $f: A^3\to B$ satisfies  {\rm (\ref{5.02})} and  {\rm (\ref{10.2})},
then  the mapping $H: A^3 \to B$  is a permuting  trihomomorphism.
\end{theorem}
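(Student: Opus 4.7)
The plan is to follow the pattern of Theorems \ref{thm10.3}, \ref{thm10.4}, and \ref{thm10.8}, adapting the argument to the $r < 1$ regime and the tri-additive $s$-functional inequality (\ref{5.1}).

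First, the existence of a unique $\mathbb{C}$-trilinear mapping $H : A^3 \to B$ satisfying (\ref{5.5}) (with $D$ replaced by $H$) is already furnished by Theorem \ref{thm10.2}; explicitly, I would work with the formula $H(x, z, a) = \lim_{n \to \infty} \frac{1}{2^n} f(2^n x, z, a)$, whose $\mathbb{C}$-trilinearity follows from tri-additivity together with the $\mathbb{T}^1$-action encoded in the scalars $\lambda, \mu, \eta$ of (\ref{5.1}), via Lemma \ref{lm2.1}.

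Second, I would establish the multiplicativity $H(uy, zw, ab) = H(u, z, a) H(y, w, b)$ for each $u \in U(A)$ and all $y, z, w, a, b \in A$ using (\ref{10.2}), following the method of Theorem \ref{thm10.3}. The device is to substitute $y \mapsto 2^n y$ inside (\ref{10.2}), divide through by $2^n$, and let $n \to \infty$: the right-hand side becomes $\theta(2^{-n} + 2^{(r-1)n}\|y\|^r)(\|z\|^r + \|w\|^r)(\|a\|^r + \|b\|^r)$, which vanishes because $r < 1$, while the relevant pieces on the left converge to the corresponding values of $H$ by the defining limit. Once the identity is proved on $U(A) \times A^5$ in the first slot, I would invoke the fact from \cite{kr83} that every $x \in A$ admits a finite decomposition $x = \sum_{j=1}^m \lambda_j u_j$ with $\lambda_j \in \mathbb{C}$ and $u_j \in U(A)$, and then use $\mathbb{C}$-linearity of $H$ in its first argument to extend: $H(xy, zw, ab) = \sum_j \lambda_j H(u_j y, zw, ab) = \sum_j \lambda_j H(u_j, z, a) H(y, w, b) = H(x, z, a) H(y, w, b)$.

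Third, I would derive permutation symmetry $H(x_{\sigma(1)}, x_{\sigma(2)}, x_{\sigma(3)}) = H(x_1, x_2, x_3)$ from (\ref{5.02}). Scaling the first coordinate by $2^n$ and dividing by $2^n$, the right-hand side $\theta \cdot 2^{(r-1)n}\|x_1\|^r\|x_2\|^r\|x_3\|^r$ tends to zero, while both sides converge to the corresponding $H$-values (the convergence of $\frac{1}{2^n} f$ when a slot other than the first is scaled follows from the Hyers-Ulam bound (\ref{5.5}) combined with the $\mathbb{C}$-trilinearity of $H$). Since transpositions generate $S_3$, this suffices for every $\sigma$, and combined with the multiplicativity established above, $H$ is a permuting trihomomorphism.

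The main obstacle is the second step: a unitary $u$ cannot be rescaled inside (\ref{10.2}) without leaving $U(A)$, so one must keep $u$ fixed and push all asymptotic decay into the other variables. This forces an asymmetric use of (\ref{10.2}) and careful bookkeeping of the powers of $2$ to ensure both that the error term tends to zero and that both sides of the intended identity genuinely converge to the right $H$-expressions; this is precisely why the $r < 1$ hypothesis enters, via the decay of $2^{(r-1)n}$.
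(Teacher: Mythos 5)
Your overall architecture matches the paper's (define $H(x,z,a)=\lim_n 2^{-n}f(2^nx,z,a)$ via Theorem \ref{thm10.2}, get $\mathbb{C}$-trilinearity through Lemma \ref{lm2.1}, prove the product identity on unitaries from {\rm (\ref{10.2})}, extend by the decomposition $x=\sum_j\lambda_ju_j$, and get permutation symmetry from {\rm (\ref{5.02})}), but your second step has a genuine gap. If you only replace $y$ by $2^ny$ in {\rm (\ref{10.2})} and divide by $2^n$, the factor $f(u,z,a)$ is not touched by the scaling: the limit you obtain is $H(uy,zw,ab)=f(u,z,a)\,H(y,w,b)$, not $H(uy,zw,ab)=H(u,z,a)H(y,w,b)$. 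Since $f$ is only approximately trilinear, $f(u,z,a)$ need not equal $H(u,z,a)$ (the bound {\rm (\ref{5.5})} only controls their difference by $\theta$-terms, which are not zero at a fixed unitary $u$), so the claim that ``the relevant pieces on the left converge to the corresponding values of $H$'' fails for this factor. The error then propagates: the unitary decomposition only yields $H(xy,zw,ab)=\bigl(\sum_j\lambda_jf(u_j,z,a)\bigr)H(y,w,b)$, and there is no way to identify $\sum_j\lambda_jf(u_j,z,a)$ with $H(x,z,a)$ because $f$ is not known to be linear.

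The repair is available with a tool you already invoke in your third step. Besides $y\mapsto 2^ny$, also scale a variable occurring in the slot of $f(u,z,a)$, e.g.\ replace $(y,z)$ by $(2^ny,2^nz)$ and divide {\rm (\ref{10.2})} by $4^n$: the right-hand side is $O\bigl(2^{2(r-1)n}\bigr)\to 0$ since $r<1$; moreover $4^{-n}f(2^n(uy),2^n(zw),ab)\to H(uy,zw,ab)$ and $2^{-n}f(u,2^nz,a)\to H(u,z,a)$, both consequences of the estimate {\rm (\ref{5.5})} together with the additivity of $H$ in each slot (exactly the auxiliary convergence you state for the permutation-symmetry step), while $2^{-n}f(2^ny,w,b)\to H(y,w,b)$ by the defining limit. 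With this modification every factor converges to the intended $H$-value and the rest of your argument (unitary decomposition, permutation symmetry from {\rm (\ref{5.02})}) goes through; it is fair to note that the paper's own treatment (``by the same reasoning as in the proof of Theorem \ref{thm8.1}'') glosses over precisely this point.
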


\begin{corollary}
Let $r < 1$ and $\theta$ be nonnegative real numbers, and let $f :
A^3 \rightarrow B$ be a mapping satisfying {\rm (\ref{5.1})}   and  $f(x,0,a)=f(0,z,a) = f(x,z,0) =0$ for all $x,z,a\in A$. 
Then there exists a unique $\mathbb{C}$-trilinear  mapping  $H : A^3 \rightarrow B$ 
satisfying {\rm (\ref{5.5})},  where $D$ is replaced by $H$ in  {\rm (\ref{5.5})}.

If, in addition, the mapping $f: A^3\to B$ satisfies  {\rm (\ref{5.02})},  {\rm (\ref{10.2})}  and $f(2x, z, a) = 2 f(x,z,a)$ for all $x,z,a\in A$, 
then  the mapping $f: A^3 \to B$  is a permuting  trihomomorphism.
\end{corollary}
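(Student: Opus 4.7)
The plan is to derive this corollary as an essentially immediate consequence of Theorem \ref{thm10.9} together with the pointwise identification $H = f$ forced by the 2-homogeneity hypothesis $f(2x,z,a) = 2f(x,z,a)$. First, I would invoke Theorem \ref{thm10.9} in its entirety: since $r<1$ and $f$ satisfies (5.1) together with the three vanishing conditions $f(x,0,a)=f(0,z,a)=f(x,z,0)=0$, we already obtain a unique $\mathbb{C}$-trilinear map $H: A^3 \to B$ satisfying (5.5) (with $D$ replaced by $H$); and because $f$ additionally satisfies (5.02) and (10.2), the second part of that theorem tells us $H$ is a permuting trihomomorphism. So the existence/uniqueness part of the corollary is inherited verbatim.

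Next, I would identify $H$ explicitly with $f$. Recall that in the $r<1$ branch (as in the theorems of Section 3 and the proof of Theorem \ref{thm10.9}, which parallels the $r<1$ constructions), the mapping $H$ is defined by
\[
H(x,z,a) \;:=\; \lim_{n\to\infty} \frac{1}{2^n}\, f\bigl(2^n x,\, z,\, a\bigr)
\]
for all $x,z,a\in A$. A straightforward induction on $n$, starting from the hypothesis $f(2x,z,a) = 2f(x,z,a)$, gives $f(2^n x, z, a) = 2^n f(x,z,a)$ for every nonnegative integer $n$. Consequently $\tfrac{1}{2^n} f(2^n x, z, a) = f(x,z,a)$ for every $n$, and passing to the limit yields $H(x,z,a) = f(x,z,a)$ for all $x,z,a \in A$.

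Combining the two steps, $f = H$ as maps $A^3 \to B$, and since $H$ has been shown to be a permuting trihomomorphism, so is $f$. There is no real obstacle here: the only point to watch is that one must use the $r<1$ formula for $H$ (the doubling limit $\tfrac{1}{2^n}f(2^n x,z,a)$), not the halving formula used in the $r>2$ versions, since it is precisely this formula that is trivialized by $f(2x,z,a)=2f(x,z,a)$. This is exactly the same pattern used in the earlier corollaries of the paper (e.g.\ the ones following Theorems \ref{thm4.1}, \ref{thm4.2}, \ref{thm5.1}, \ref{thm5.2}, \ref{thm4.3}, \ref{thm4.4}, \ref{thm5.3}, \ref{thm5.4}, \ref{thm8.1}, \ref{thm8.2}, \ref{thm10.1}, \ref{thm10.2}, \ref{thm10.3}, \ref{thm10.4}, \ref{thm10.8}), so no new idea is needed; the corollary serves to record a \emph{hyperstability}-type statement where the approximately-trilinear map is in fact itself the trihomomorphism.
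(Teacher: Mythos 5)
Your proposal is correct and follows essentially the same route as the paper: invoke Theorem \ref{thm10.9} for the unique $\mathbb{C}$-trilinear $H$ satisfying {\rm (\ref{5.5})} (and its permuting trihomomorphism property under {\rm (\ref{5.02})} and {\rm (\ref{10.2})}), then use $f(2x,z,a)=2f(x,z,a)$ to identify $H=f$, exactly as the paper's remark preceding these corollaries indicates. The only negligible slip is your parenthetical claim that only the doubling formula is trivialized by the $2$-homogeneity hypothesis -- the halving formula $2^n f(x/2^n,z,a)$ would equally reduce to $f(x,z,a)$ -- but since the $r<1$ construction is indeed the doubling limit, your argument stands as written.
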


\section*{Conclusions}

We have  introduced and solved  the tri-additive $s$-functional inequalities  {\rm (\ref{0.1})} and {\rm (\ref{0.2})} and  proved the Hyers-Ulam stability and the hyperstability of   permuting triderivations and permuting  trihomomorphisms in  Banach algebras and unital $C^*$-algebras, 
associated with  the tri-additive $s$-functional inequalities  {\rm (\ref{0.1})} and {\rm (\ref{0.2})}.

\section*{Acknowledgments}

This work  was supported by Basic Science Research Program through the National Research Foundation of Korea funded by the Ministry of Education, Science and Technology (NRF-2017R1D1A1B04032937).

\section*{Competing interests}

The author declares that he has  no competing interests.

\end{document}